\newtheorem{theorem}{Theorem}[section]
\newtheorem{proposition}{Proposition}[section]
\newtheorem{corollary}{Corollary}[section]
\newtheorem{definition}{Definition}[section]
\newtheorem{lemma}{Lemma}[section]
\declaretheorem[style =remark,qed =$\Diamond$,Refname ={Remark,Remarks},within=section]{remark}
\declaretheorem[style =remark,Refname ={Example,Examples},within=section]{example}
\newcommand{\setto}{\rightrightarrows} 
\newcommand{\dom}{\operatorname{dom}}
\newcommand{\inte}{\operatorname{int}}
\newcommand{\range}{\operatorname{range}}
\DeclareMathOperator*{\argmin}{arg\,min}
\newcommand{\Id}{\operatorname{Id}}
\newcommand{\Fix}{\operatorname{Fix}}
\newcommand{\Sing}{\operatorname{Sing}}
\newcommand{\dist}{\operatorname{d}}
\newcommand{\prox}{\operatorname{prox}}
\DeclareMathOperator*{\Limsup}{\operatorname{Limsup}}
\newcommand{\StrFix}{\mathbf{Fix}\,}
\begin{document}
\title{Union Averaged Operators with Applications to Proximal Algorithms for Min-Convex Functions}

\author{
	Minh N.\ Dao\thanks{CARMA, 
		University of Newcastle, 
		Callaghan, NSW 2308, Australia.
		E-mail:~\href{mailto:daonminh@gmail.com}{daonminh@gmail.com}}
	\and
	Matthew K.\ Tam\thanks{Institut f\"ur Numerische und Angewandte Mathematik,
		Universit\"at G\"ottingen,
		37083 G\"ottingen, Germany.
		\mbox{E-mail:}~\href{mailto:m.tam@math.uni-goettingen.de}{m.tam@math.uni-goettingen.de}}
}

\date{October 17, 2018}

\maketitle

\begin{abstract}
In this paper, we introduce and study a class of structured set-valued operators which we call \emph{union averaged nonexpansive}. At each point in their domain, the value of such an operator can be expressed as a finite union of single-valued averaged nonexpansive operators. We investigate various structural properties of the class and show, in particular, that is closed under taking unions, convex combinations, and compositions, and that their fixed point iterations are locally convergent around strong fixed points. We then systematically apply our results to analyze proximal algorithms in situations where union averaged nonexpansive operators naturally arise. In particular, we consider the problem of minimizing the sum two functions where the first is convex and the second can be expressed as the minimum of finitely many convex functions.
\end{abstract}

\paragraph{Mathematics Subject Classification (MSC 2010):}
90C26 $\cdot$ 
47H10 $\cdot$ 
47H04 
    
\paragraph{Keywords:}
admissible control $\cdot$
averaged operator $\cdot$
fixed point iteration $\cdot$
local convergence $\cdot$
proximal algorithms $\cdot$
set-valued map

\section{Introduction}
The notion of an \emph{averaged nonexpansive} operator is one which nicely balances two properties of importance in the context of fixed point algorithms, namely, \emph{usefulness} and \emph{applicability}.  In this context, usefulness is meant in the sense that algorithms based on such operators are provably convergent, for instance, by appealing to Opial-type results \cite{Ceg12,Opi67}, and applicability is meant in sense that the class of averaged nonexpansive is significantly rich so as to include many commonly encountered operators. Indeed, the class includes all \emph{firmly nonexpansive} operators as well as their convex combinations and compositions \cite[Section~4.5]{BauCom17}. For further information on averaged operators, the reader is referred to \cite{BaiBruRei78,BauCom17,Ceg12} and the references therein.

In many applications, particularly those involving some of kind of nonconvexity, the involved algorithmic operator is not averaged nonexpansive. Nevertheless, it is sometimes still the case that some underlying averaged nonexpansive structure which can be exploited is present. A notable example is provided by \emph{sparsity constrained} optimization in which the feasible region is a lower-level set of the $\ell_0$-psuedo norm. This set can be naturally expressed as the union of a finite number of ``sparsity subspaces''. Consequently, at each point in the space, its metric projector can be expressed as the union a subset of the averaged nonexpansive projectors onto these subspaces, although the projector onto the lower-set itself is not averaged nonexpansive. Indeed, this type of decomposability was consider in \cite{Tam17} which was, in turn, inspired by \cite{BauNol14}.

In this work, we aim to exploit structure of the aforementioned type. More precisely, we consider a class of set-valued operators which we call \emph{union averaged nonexpansive} (as well as the notion of a \emph{union nonexpansive operator}). At each point in the ambient space, the value of these operators can be described as a union of single-valued averaged nonexpansive operators from a finite family. A related notion, \emph{union paracontracting} operators, was previously studied by the second author in \cite{Tam17}. A significant short-coming of the class of union paracontracting operators is that they are, in general, not closed under operators taking convex combination and compositions, thus making it more difficult to determine if a given operators belong to the class. The situation is remedied by union average nonexpansiveness.

The remainder of this paper is organized as follows. We begin, in Section~\ref{s:preliminaries}, by recalling the necessary mathematical preliminaries. In Section~\ref{s:union averaged operators} we introduce the notion of \emph{union averaged nonexpansive} operators and study their closure and fixed point properties (Proposition~\ref{prop:closedness}~\&~\ref{prop:fixed points}). In Section~\ref{s:convergence}, we provide a variation of \cite{Els92} (Theorem~\ref{thm:KM iterations with admissible control}) and, as a corollary, deduce local convergence for union (averaged) nonexpansive maps around their strong fixed points (Theorem~\ref{thm:union}). In Section~\ref{s:min-convex}, we introduce and study functions which can be expressed as the minimum of finitely many convex functions; we term these functions \emph{min-convex} before concluding, in Section~\ref{s:proximal algorithms} with a systematic study of proximal algorithms applied to minimization of min-convex problems including various \emph{projection algorithms}, the \emph{proximal point algorithm}, \emph{forward-backward splitting} and \emph{Douglas--Rachford splitting}.

\section{Preliminaries}\label{s:preliminaries}
In this section, we recall the necessary preliminaries for the subsequent sections. Unless stated otherwise, throughout this work we assume that
\begin{equation*}
\fcolorbox{gray!30}{gray!30}{  X\text{ is a (real) finite-dimensional Hilbert space}  }
\end{equation*}
with inner-product $\langle\cdot,\cdot\rangle$ and induced norm $\|\cdot\|$.

In order to introduce the two new classes of structured set-valued operators in Definition~\ref{def:union averaged}, we first recall the definitions and basic properties of their single-valued counterparts. The term ``averaged'' originally appeared in \cite{BaiBruRei78}.

\begin{definition}[Averaged nonexpansive operators]
\label{def:averaged}
A single-valued operator $T\colon X\to X$ is said to be \emph{nonexpansive} if 
\begin{equation*}
\forall x,y\in X,\quad \|Tx-Ty\|\leq \|x-y\|,
\end{equation*}
and \emph{$\alpha$-averaged nonexpansive} if $\alpha\in(0,1)$ and
\begin{equation*}
\forall x,y\in X,\quad \|Tx-Ty\|^2 +\frac{1-\alpha}{\alpha}\|(\Id-T)x-(\Id-T)y\|^2 \leq \|x-y\|^2.
\end{equation*}
We say $T$ is \emph{averaged nonexpansive} if there is an $\alpha\in(0,1)$ such that $T$ is $\alpha$-averaged nonexpansive.
\end{definition}    

It follows immediately from the above definition that every averaged nonexpansive operator is nonexpansive. 
The precise relationship between the two classes is given in the following proposition.
\begin{proposition}[Characterizations of averaged nonexpansiveness]
\label{prop:AN characterizations}
Let $T\colon X\to X$ be an operator and let $\alpha\in(0,1)$. The following assertions are equivalent.
\begin{enumerate}[label =(\alph*)]
\item $T$ is $\alpha$-averaged nonexpansive.
\item $T =(1-\alpha)\Id+\alpha R$ for some nonexpansive operator $R\colon X\to X$.
\item $(1-1/\alpha)\Id+(1/\alpha)T$ is nonexpansive.
\end{enumerate}
\end{proposition}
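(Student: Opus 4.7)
The plan is to prove the chain (b) $\Leftrightarrow$ (c) $\Leftrightarrow$ (a), with (b) $\Leftrightarrow$ (c) being essentially tautological and the main content sitting in (a) $\Leftrightarrow$ (b)/(c).

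For (b) $\Leftrightarrow$ (c), I would simply observe that the affine map $T \mapsto (1-1/\alpha)\Id + (1/\alpha)T$ is the inverse of $R \mapsto (1-\alpha)\Id + \alpha R$ (both are well-defined since $\alpha \in (0,1)$). Hence writing $T = (1-\alpha)\Id + \alpha R$ for some operator $R$ is equivalent to defining $R := (1-1/\alpha)\Id + (1/\alpha)T$, and the nonexpansiveness of this $R$ is precisely condition (c).

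For (a) $\Leftrightarrow$ (c), I would set $R := (1-1/\alpha)\Id + (1/\alpha)T$ (so that equivalently $T = (1-\alpha)\Id + \alpha R$) and compute both sides of the averaged-nonexpansive inequality in terms of $u := x-y$ and $v := Rx - Ry$. The key tool is the elementary identity
\begin{equation*}
\|(1-\alpha)u + \alpha v\|^2 = (1-\alpha)\|u\|^2 + \alpha\|v\|^2 - \alpha(1-\alpha)\|u-v\|^2,
\end{equation*}
applied to $Tx - Ty = (1-\alpha)u + \alpha v$. At the same time, $(\Id - T)x - (\Id - T)y = \alpha(u-v)$, so that $\tfrac{1-\alpha}{\alpha}\|(\Id-T)x - (\Id-T)y\|^2 = \alpha(1-\alpha)\|u-v\|^2$. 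Adding the two expressions, the cross terms $\pm\alpha(1-\alpha)\|u-v\|^2$ cancel, leaving
\begin{equation*}
\|Tx-Ty\|^2 + \tfrac{1-\alpha}{\alpha}\|(\Id-T)x - (\Id-T)y\|^2 \;=\; (1-\alpha)\|u\|^2 + \alpha\|v\|^2.
\end{equation*}
Subtracting $\|u\|^2 = \|x-y\|^2$ from both sides, the averaged inequality becomes $\alpha(\|v\|^2 - \|u\|^2) \leq 0$, i.e.\ $\|Rx-Ry\| \leq \|x-y\|$. Since this holds for all $x,y$, (a) is equivalent to nonexpansiveness of $R$, which is (c).

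The whole argument is a direct manipulation; no genuine obstacle arises. The only place requiring minor care is making sure the identity above is applied with the correct convex-combination weights and that the factor $\alpha$ in $(\Id - T)x - (\Id - T)y = \alpha(u-v)$ is not dropped, since it is exactly what produces the cancellation of the cross term and reduces the chain of equivalences to the defining inequality for nonexpansiveness of $R$.
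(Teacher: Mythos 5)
Your proof is correct. The paper itself does not carry out this argument; it simply cites \cite[Definition~4.33 and Proposition~4.35]{BauCom17}, and the computation you give is precisely the standard one underlying that reference: the affine change of variables between $T$ and $R$ for (b)$\Leftrightarrow$(c), and the convex-combination identity $\|(1-\alpha)u+\alpha v\|^2=(1-\alpha)\|u\|^2+\alpha\|v\|^2-\alpha(1-\alpha)\|u-v\|^2$ together with $(\Id-T)x-(\Id-T)y=\alpha(u-v)$ to reduce (a) to nonexpansiveness of $R$. All the algebra checks out, so your write-up is a correct, self-contained version of the proof the paper outsources.
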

\begin{proof}
Follows by combining \cite[Definition~4.33 and Proposition~4.35]{BauCom17}.
\end{proof}

The following proposition shows that the classes of nonexpansive and averaged nonexpansive operators are both closed under taking convex combination and under compositions. Such properties are of interest because they provide a way to verify that a given operator is averaged nonexpansive in the case that it can be represented in terms of simpler operators whose averaged nonexpansiveness can be more easily checked.

\begin{proposition}[Convex combinations and compositions]\label{prop:AN closedness}
Let $J :=\{1,\dots,m\}$ and let $T_j\colon X\to X$ be $\alpha_j$-averaged nonexpansive (resp. nonexpansive) for each $j\in J$. Then the following assertions hold.
\begin{enumerate}[label =(\alph*)]
\item\label{it:AN convex combination} $\sum_{j\in J}\omega_jT_j$ is $\alpha$-averaged nonexpansive with
\begin{equation*}
\alpha :=\sum_{j\in J}\omega_j\alpha_j
\end{equation*}		
(resp. nonexpansive) whenever $(\omega_j)_{j\in J}\subseteq\mathbb{R}_{++}$ with $\sum_{j\in J}\omega_j =1$.
\item\label{it:AN compositions alpha} $T_m\circ\dots\circ T_2\circ T_1$ is $\alpha$-averaged nonexpansive with
\begin{equation*}
\alpha :=\left(1+\left(\sum_{j\in J}\frac{\alpha_j}{1-\alpha_j}\right)^{-1}\right)^{-1}
\end{equation*}
(resp. nonexpansive).
\end{enumerate}
\end{proposition}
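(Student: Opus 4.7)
My plan is to handle parts (a) and (b) separately, reducing each to a manipulation of the defining inequality of Definition~\ref{def:averaged} combined with the equivalent reformulations from Proposition~\ref{prop:AN characterizations}.

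For part (a), the nonexpansive case is immediate from the triangle inequality applied term by term. For the $\alpha$-averaged case, the cleanest approach uses characterization~(b) of Proposition~\ref{prop:AN characterizations}: write each $T_j = (1-\alpha_j)\Id + \alpha_j R_j$ with $R_j\colon X\to X$ nonexpansive, set $\alpha := \sum_{j\in J}\omega_j\alpha_j$, and rearrange
\[
\sum_{j\in J}\omega_j T_j \;=\; (1-\alpha)\Id + \alpha\sum_{j\in J}\frac{\omega_j\alpha_j}{\alpha}\,R_j .
\]
Because the weights $\omega_j\alpha_j/\alpha$ are nonnegative and sum to one, the inner sum is a convex combination of nonexpansive operators (handled in the nonexpansive half of the same item), hence nonexpansive, and characterization~(b) identifies the composite as $\alpha$-averaged.

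For part (b), the nonexpansive case follows by a trivial induction on~$m$. For the averaged case I would avoid an iterated two-operator argument and instead use a direct telescoping computation based on the rearrangement of Definition~\ref{def:averaged} given by
\[
\|(\Id-T)x-(\Id-T)y\|^2 \leq \tfrac{\alpha}{1-\alpha}\bigl(\|x-y\|^2-\|Tx-Ty\|^2\bigr).
\]
Introduce the orbit notation $x_0:=x$, $y_0:=y$, $x_j:=T_j x_{j-1}$, $y_j:=T_j y_{j-1}$, and the abbreviations $\beta_j:=\alpha_j/(1-\alpha_j)$. The $\alpha_j$-averagedness of $T_j$ then reads
\[
\|(x_{j-1}-y_{j-1})-(x_j-y_j)\|^2 \leq \beta_j\bigl(\|x_{j-1}-y_{j-1}\|^2-\|x_j-y_j\|^2\bigr).
\]

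The key step is to notice that $(\Id-T)x-(\Id-T)y = \sum_{j\in J}\bigl[(x_{j-1}-y_{j-1})-(x_j-y_j)\bigr]$ and to apply the weighted Cauchy--Schwarz inequality with weights $\beta_j$:
\[
\|(\Id-T)x-(\Id-T)y\|^2 \leq \Bigl(\sum_{j\in J}\beta_j\Bigr)\sum_{j\in J}\frac{1}{\beta_j}\|(x_{j-1}-y_{j-1})-(x_j-y_j)\|^2.
\]
Substituting the $j$-th averaged bound and telescoping the resulting sum collapses it to $\|x-y\|^2-\|Tx-Ty\|^2$, producing precisely the averaged-type inequality for $T$ with parameter determined by $\alpha/(1-\alpha)=\sum_{j\in J}\beta_j$; solving for $\alpha$ reproduces the stated formula $\alpha=(1+(\sum_j\alpha_j/(1-\alpha_j))^{-1})^{-1}$. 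The main obstacle is identifying the correct weighting in Cauchy--Schwarz: once one takes $\beta_j$ as the weights, the telescoping forces the additive law for $\alpha_j/(1-\alpha_j)$ and sidesteps the more cumbersome inductive proof through the two-operator composition formula.
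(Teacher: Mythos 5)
Your proof is correct, and it is worth noting that the paper itself does not prove this proposition at all: it simply defers to \cite[Propositions~2.2~\&~2.5]{ComYam15} (and to \cite{Ceg12} via the $\nu$-firmly nonexpansive terminology). So what you have written is a self-contained reconstruction of the cited argument rather than an alternative to anything in the paper. Part~(a) is the standard regrouping $\sum_j\omega_jT_j=(1-\alpha)\Id+\alpha\sum_j(\omega_j\alpha_j/\alpha)R_j$ with $\alpha=\sum_j\omega_j\alpha_j$; the reweighted coefficients are positive and sum to one, the inner convex combination is nonexpansive by the (independently proved) nonexpansive half of the same item, and Proposition~\ref{prop:AN characterizations}(b) finishes it --- no circularity, and $\alpha\in(0,1)$ as required. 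For part~(b), your telescoping computation is the right mechanism and checks out: with $v_j:=(x_{j-1}-y_{j-1})-(x_j-y_j)$ one has $(\Id-T)x-(\Id-T)y=\sum_j v_j$, the weighted Cauchy--Schwarz bound $\|\sum_j v_j\|^2\leq(\sum_j\beta_j)\sum_j\beta_j^{-1}\|v_j\|^2$ combined with $\beta_j^{-1}\|v_j\|^2\leq\|x_{j-1}-y_{j-1}\|^2-\|x_j-y_j\|^2$ telescopes exactly to $(\sum_j\beta_j)(\|x-y\|^2-\|Tx-Ty\|^2)$, and solving $\alpha/(1-\alpha)=\sum_j\beta_j$ gives the stated constant, which lies in $(0,1)$ since $\sum_j\beta_j>0$. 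The payoff of your route over an induction through the two-operator composition formula is that the additive law for $\alpha_j/(1-\alpha_j)$ appears in one step rather than having to be verified as the closed form of a two-term recursion; the only thing you give up is that the inductive route would also directly furnish the intermediate compositions as averaged operators, which the paper occasionally uses implicitly (e.g.\ in the proof of Proposition~\ref{prop:common fixed points}).
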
	
\begin{proof}
See \cite[Propositions~2.2~\&~2.5]{ComYam15}. Note also that Definition~\ref{def:averaged} for averaged operators appeared in \cite[Definition~2.2.14]{Ceg12} under the name $\nu$-firmly nonexapnsive operators with $\nu$ taken to be $(1-\alpha)/\alpha$. Appealing to the connection given in \cite[Corollary~2.2.17]{Ceg12}, the conclusion can be also deduced from \cite[Theorems~2.2.35~\&~2.2.42]{Ceg12}.
\end{proof}

The second notion that will be used in Definition~\ref{def:union averaged} is that of an \emph{outer semicontinuous} map \cite[Section~3B]{DonRoc14}. In what follows, we recall its definition and basic properties.

\begin{definition}[Outer semicontinuity]
Let $X$ and $Y$ be Hilbert spaces. A set-valued map $\phi\colon X\setto Y$ is \emph{outer semicontinuous (osc)} at $\bar{x}$ if
\begin{equation*}
\phi(\bar{x}) \supseteq \Limsup_{x\to\bar{x}}\phi(x) :=\{y\in Y:\exists\,x_n\to \bar{x},\,\exists\,y_n\to y\text{ with }y_n\in \phi(x_n)\}. 
\end{equation*}  
That is, the limit supremum is understood in the sense of the Painlev\'e--Kuratowski outer limit on $X\times Y$.	
\end{definition}

\begin{proposition}[Cartesian products]\label{prop:osc_product}
Let $J :=\{1,\dots,m\}$ and let $\phi_j\colon X\setto Y_j$ be osc for each $j\in J$. Then the mapping
$\phi\colon X\setto Y_1\times\dots\times Y_m$  defined by
$$x\mapsto \phi(x) :=\phi_1(x)\times\dots\times\phi_m(x)$$
is also osc.
\end{proposition}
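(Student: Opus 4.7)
The plan is to verify outer semicontinuity pointwise at an arbitrary $\bar{x}\in X$, directly from the definition. Take an element $y =(y^1,\dots,y^m)\in \Limsup_{x\to\bar{x}}\phi(x)$. By definition of the Painlev\'e--Kuratowski outer limit, there exist sequences $x_n\to\bar{x}$ in $X$ and $y_n =(y_n^1,\dots,y_n^m)\to y$ in $Y_1\times\dots\times Y_m$ with $y_n\in \phi(x_n) =\phi_1(x_n)\times\dots\times\phi_m(x_n)$.

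Next I would decouple the data into coordinates. The membership $y_n\in \phi(x_n)$ is equivalent to $y_n^j\in \phi_j(x_n)$ for every $j\in J$, and convergence in the finite product $Y_1\times\dots\times Y_m$ (endowed with the product inner-product) is equivalent to convergence in each factor, so $y_n^j\to y^j$ in $Y_j$ for every $j\in J$. Consequently, for each $j\in J$ the pair of sequences $(x_n)$ and $(y_n^j)$ witnesses the inclusion $y^j\in \Limsup_{x\to\bar{x}}\phi_j(x)$, and the osc hypothesis on $\phi_j$ at $\bar{x}$ yields $y^j\in \phi_j(\bar{x})$.

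Assembling these coordinate-wise inclusions gives $y =(y^1,\dots,y^m)\in \phi_1(\bar{x})\times\dots\times\phi_m(\bar{x}) =\phi(\bar{x})$. Since $y$ was arbitrary, $\phi(\bar{x})\supseteq \Limsup_{x\to\bar{x}}\phi(x)$, and since $\bar{x}$ was arbitrary $\phi$ is osc on all of $X$.

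The argument is almost entirely bookkeeping and I do not expect a genuine obstacle; the only mild points to be careful about are that convergence in a finite Cartesian product decomposes into componentwise convergence (so no diagonal extraction is needed), and that the defining sequences $(x_n)$ are shared across coordinates, which is automatic here because the same $y_n\in\phi(x_n)$ services all $j$ simultaneously.
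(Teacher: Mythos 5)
Your proposal is correct and follows essentially the same route as the paper's proof: take $x_n\to\bar{x}$ and $y_n\to y$ with $y_n\in\phi(x_n)$, observe that membership and convergence both decompose componentwise in the finite product, apply outer semicontinuity of each $\phi_j$, and reassemble. The only difference is that you spell out the bookkeeping (shared sequence $(x_n)$, no diagonal extraction) slightly more explicitly than the paper does.
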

\begin{proof}
Let $\bar{x}\in X$ and consider sequences $x_n\to \bar{x}$ and $y_n :=(y_{1n}, \dots, y_{mn})\to y :=(y_1, \dots, y_m)$ and $y_n\in \phi(x_n)$, or equivalently, $y_{jn}\in \phi_j(x_n)$ for every $j\in J$. Since each $\phi_j$ is osc, it holds that $y_j\in \phi_j(\bar{x})$, and hence $y\in \phi(\bar{x}) =\phi_1(\bar{x})\times\dots\times\phi_m(\bar{x})$.
\end{proof}

\begin{proposition}\label{prop:osc_composition}
Let $\phi\colon X\setto Y$ be osc, $I$ be a nonempty finite index set, $\{T_i\}_{i\in I}$ be a collection of continuous single-valued operators on $X$, and $\varphi\colon X\setto I$ be osc. Then the mapping $\psi\colon X\setto Y$ is osc where $\psi$ is defined by 
\begin{equation*}
x\mapsto \psi(x) :=\{\phi(T_i(x)): i\in \varphi(x)\}.
\end{equation*} 
Consequently, if $T\colon X\to X$ is continuous, then $\phi\circ T$ is osc.
\end{proposition}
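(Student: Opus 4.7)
The plan is to prove outer semicontinuity directly from the Painlev\'e--Kuratowski definition, exploiting the finiteness of the index set $I$ via a pigeonhole argument.

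Fix $\bar{x}\in X$ and consider sequences $x_n\to\bar{x}$ and $y_n\to y$ with $y_n\in\psi(x_n)$. By definition of $\psi$, for each $n$ there exists an index $i_n\in\varphi(x_n)$ with $y_n\in\phi(T_{i_n}(x_n))$. The first move is to observe that since $I$ is finite, the sequence $(i_n)$ takes only finitely many values, so there exist $i^\star\in I$ and a subsequence (which, abusing notation, I also denote by $i_n$) along which $i_n = i^\star$ for all $n$. Restricting attention to this subsequence, I still have $x_n\to\bar{x}$, $y_n\to y$, $i^\star\in\varphi(x_n)$, and $y_n\in\phi(T_{i^\star}(x_n))$.

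Now I apply outer semicontinuity twice. First, applying outer semicontinuity of $\varphi$ to the constant sequence $i^\star\to i^\star$ with $i^\star\in\varphi(x_n)$ yields $i^\star\in\varphi(\bar{x})$. Second, by continuity of $T_{i^\star}$, the sequence $T_{i^\star}(x_n)$ converges to $T_{i^\star}(\bar{x})$; combined with $y_n\to y$ and $y_n\in\phi(T_{i^\star}(x_n))$, outer semicontinuity of $\phi$ at $T_{i^\star}(\bar{x})$ gives $y\in\phi(T_{i^\star}(\bar{x}))$. Since $i^\star\in\varphi(\bar{x})$, this means $y\in\psi(\bar{x})$, which is exactly the osc property.

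For the ``Consequently'' clause, I would specialize by taking $I=\{1\}$, $T_1 := T$, and $\varphi\colon X\setto I$ the constant map $\varphi(x) := \{1\}$; this $\varphi$ is trivially osc, $T$ is continuous by hypothesis, and $\phi$ is osc, so the first part yields that $x\mapsto \phi(T(x)) = (\phi\circ T)(x)$ is osc.

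I do not anticipate a real obstacle here: the only substantive step is the pigeonhole reduction to a constant-index subsequence, which is what allows the two separate osc hypotheses (on $\varphi$ and on $\phi$) to be chained via the continuity of $T_{i^\star}$.
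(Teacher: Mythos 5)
Your proof is correct and is essentially identical to the paper's: the same pigeonhole reduction to a constant-index subsequence, followed by applying outer semicontinuity of $\varphi$, continuity of $T_{i^\star}$, and outer semicontinuity of $\phi$ in that order. The explicit treatment of the ``Consequently'' clause via the singleton index set is a fine (if slightly more formal) way to handle what the paper leaves implicit.
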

\begin{proof}
Let $\bar{x}\in X$ and consider sequences $x_n\to \bar{x}$ and $y_n\to y$ with $y_n\in \psi(x_n)$. By definition, $y_n\in \phi(T_{i_n}(x_n))$ for some $i_n\in \varphi(x_n)\subseteq I$. Since $I$ is finite, by passing to a subsequence, we may assume that $i_n =i\in I$ for all $n$. Then the osc of $\varphi$ implies that $i\in \varphi(\overline{x})$, the continuity of $T_i$ implies that $T_i(x_n)\to T_i(\overline{x})$. As $\phi$ is osc and $y_n\in \phi(T_i(x_n))$, it follows that $y\in \phi(T_i(\overline{x}))\subseteq \psi(\overline{x})$. 
\end{proof}

\section{Unions of averaged operators}\label{s:union averaged operators}
In this section, we introduce the classes of operators which are the main object of study in this work and investigate their properties. We begin with their definition.

\begin{definition}[Union averaged nonexpansive operators]\label{def:union averaged}
A set-valued operator $T\colon X\setto X$ is said to be \emph{union $\alpha$-averaged nonexpansive} (resp.\ \emph{union nonexpansive}) if $T$ can be expressed in the form
\begin{equation}\label{eq:def union averaged}
\forall x\in X,\quad T(x) =\{T_i(x): i\in\varphi(x)\},
\end{equation}
where $I$ is a finite index set, $\{T_i\}_{i\in I}$ is a collection of $\alpha$-averaged nonexpansive (resp.\ nonexpansive) operators on $X$, and $\varphi\colon X\setto I$, called an \emph{active selector}, is an osc operator with nonempty values.

As before, we say $T$ is \emph{union averaged nonexpansive} if there is an $\alpha\in(0,1)$ such that $T$ is union $\alpha$-averaged nonexpansive.
\end{definition}

In order to demonstrate as situation in which union averaged nonexpansiveness naturally arises, we state the following example which we shall return to in Section~\ref{s:min-convex}.
\begin{example}[Sparsity projectors]\label{ex:sparsity}
Let $X =\mathbb{R}^n$ and $s\in\{0,1,\dots,n-1\}$. A common approach in \emph{sparsity optimization} involves minimization over the nonconvex sparsity constraint set
\begin{equation*}
C :=\{x\in\mathbb{R}^n:\|x\|_0\leq s\},
\end{equation*}
where $\|\cdot\|_0$ denotes the \emph{$\ell_0$-functional} which counts the number of nonzero entries in a vector. By denoting $\mathcal{I} :=\{I\in 2^{\{1,2,\dots,n\}}: |I| =s\}$, the set $C$ can be naturally expressed as a union of nonempty subspaces as
\begin{equation*}
C =\bigcup_{I\in\mathcal{I}}C_I\text{ where }C_I :=\{x\in\mathbb{R}^n:x_i\neq 0\text{ only if }i\in I\}.
\end{equation*}
In Proposition~\ref{prop:projection operators}\ref{it:proj}, we shall show that \emph{nearest point projector} onto $C$ is union $1/2$-averaged nononexpansive with
$$ P_C(x) :=\{c\in C:\|x-c\| =\dist(c,C)\} =\{P_{C_I}(x): I\in\varphi(x)\}$$
where $\varphi(x) :=\{I\in\mathcal{I}:\min_{i\in I}|x_i|\geq \max_{i\not\in I}|x_i|\}$.
\end{example}

The following proposition is the union averaged nonexpansive analogue of Proposition~\ref{prop:AN characterizations} and offers equivalent characterizations of union averaged nonexpansiveness. In what follows, the sum of two or more sets is understood in the sense of the \emph{Minkowski sum}.
\begin{proposition}[Equivalent characterizations of union averaged nonexpansiveness]
\label{prop:averaged}
Let $T\colon X\setto X$ be a set-valued operator and let $\alpha\in(0,1)$. The following assertions are equivalent.
\begin{enumerate}[label =(\alph*)]
\item $T$ is union $\alpha$-averaged nonexpansive.
\item\label{it:prop averaged 2} $T =(1-\alpha)\Id+\alpha R$ for some union nonexpansive operator $R\colon X\setto X$.
\item $(1-1/\alpha)\Id+(1/\alpha)T$ is union nonexpansive.
\end{enumerate}
\end{proposition}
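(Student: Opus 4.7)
The plan is to reduce the equivalence to the single-valued case (Proposition~\ref{prop:AN characterizations}) by observing that an affine combination of the form $(1-\alpha)\Id+\alpha(\cdot)$ distributes over the union representation and preserves the active selector. The osc and nonempty-valuedness of $\varphi$ only need to be carried along; they are untouched by the algebraic manipulations.

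For $\text{(a)} \Rightarrow \text{(b)}$, I would start with a union representation $T(x) =\{T_i(x): i\in\varphi(x)\}$ where each $T_i$ is $\alpha$-averaged nonexpansive. Proposition~\ref{prop:AN characterizations} yields nonexpansive operators $R_i\colon X\to X$ with $T_i =(1-\alpha)\Id+\alpha R_i$ for every $i\in I$. Setting $R(x) :=\{R_i(x):i\in\varphi(x)\}$ then gives a union nonexpansive operator (same finite family of single-valued operators $\{R_i\}_{i\in I}$, same osc active selector $\varphi$), and the Minkowski sum distributes through the union so that
\begin{equation*}
((1-\alpha)\Id+\alpha R)(x) =\{(1-\alpha)x+\alpha R_i(x): i\in\varphi(x)\} =\{T_i(x):i\in\varphi(x)\} =T(x).
\end{equation*}

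For $\text{(b)} \Rightarrow \text{(c)}$, the computation is purely algebraic: since $(1-1/\alpha)+(1/\alpha)(1-\alpha) =0$, we get
\begin{equation*}
(1-1/\alpha)\Id+(1/\alpha)T =(1-1/\alpha)\Id+(1/\alpha)\bigl((1-\alpha)\Id+\alpha R\bigr) =R,
\end{equation*}
which is union nonexpansive by hypothesis. For $\text{(c)} \Rightarrow \text{(a)}$, write $S :=(1-1/\alpha)\Id+(1/\alpha)T$ in union nonexpansive form $S(x) =\{S_i(x): i\in\varphi(x)\}$, note that $T =(1-\alpha)\Id+\alpha S$, and apply Proposition~\ref{prop:AN characterizations} to each $S_i$ to conclude that $T_i:=(1-\alpha)\Id+\alpha S_i$ is $\alpha$-averaged nonexpansive; then $T(x)=\{T_i(x):i\in\varphi(x)\}$ is the required representation.

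No step poses a genuine difficulty; the only thing to be careful about is the set-valued arithmetic. Specifically, one needs to check that adding a single-valued term to a set-valued operator and scaling by a constant both commute with the union representation, so that the finite family of single-valued operators and the active selector $\varphi$ (already osc with nonempty values) transfer intact between the three characterizations. This is immediate from the definition of the Minkowski sum, but it is the one place where one must resist the temptation to manipulate $\Id+R$ without unpacking the union.
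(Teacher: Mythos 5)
Your proposal is correct and is exactly the argument the paper compresses into its one-line proof (``combine Definition~\ref{def:union averaged} with Proposition~\ref{prop:AN characterizations}''): apply the single-valued characterization to each operator in the family while keeping the same active selector, and observe that the affine combinations distribute over the union. The set-valued arithmetic you flag is the only point of substance, and you handle it correctly.
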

\begin{proof}
Follows by combining Definition~\ref{def:union averaged} with Proposition~\ref{prop:AN characterizations}.
\end{proof}

A class of operators related to those in Definition~\ref{def:union averaged}, the class of  \emph{union paracontracting operators}, was introduced in \cite{Tam17}. Recall that a single-valued operator $S\colon X\to X$ is \emph{paracontracting} if it is continuous and \emph{strictly quasi-nonexpansive}, that is,
$$ \forall x\in X\setminus \Fix S,\,\forall y\in\Fix S,\quad \|S(x)-y\| < \|x-y\|. $$
An operator $T\colon X\setto X$ of the form \eqref{eq:def union averaged} is \emph{union paracontracting} if $\{T_i\}_{i\in I}$ is instead a collection of paracontracting operators. 

In general, convex combinations and compositions of paracontracting operators need not stay paracontracting except when the individual operators share a common fixed points; see \cite[Theorem ~2.1.26 and Corollary~2.1.29]{Ceg12}. Consequently, the same is true of union paracontracting operators. As the following proposition shows, this shortcomming is rectified by using averaged nonexpansive operators in place of paracontracting.

\begin{proposition}[Unions, combinations and compositions]\label{prop:closedness}
Let $J :=\{1,\dots,m\}$ and let $T_j\colon X\setto X$ be union $\alpha_j$-averaged nonexpansive (resp. union nonexpansive) for each $j\in J$. Then the following assertions hold.
\begin{enumerate}[label =(\alph*)]
\item\label{it:UAN union} $T\colon X\setto X$ defined by $x\mapsto T(x) :=\cup_{j\in J} T_j(x)$ is union $\alpha$-averaged nonexpansive with 
\begin{equation*}
\alpha :=\max_{j\in J} \alpha_j
\end{equation*}
(resp. union nonexpansive).		  
\item\label{it:UAN convex combination} $\sum_{j\in J}\omega_jT_j$ is union $\alpha$-averaged nonexpansive with
\begin{equation}
\label{eq:convex combination alpha}
\alpha :=\sum_{j\in J}\omega_j\alpha_j
\end{equation}		
(resp. union nonexpansive) whenever $(\omega_j)_{j\in J}\subseteq\mathbb{R}_{++}$ with $\sum_{j\in J}\omega_j =1$.
\item\label{it:UAN composition} $T_m\circ\dots\circ T_2\circ T_1$ is union $\alpha$-averaged nonexpansive with
\begin{equation}
\label{eq:composition alpha}
\alpha :=\left(1+\left(\sum_{j\in J}\frac{\alpha_j}{1-\alpha_j}\right)^{-1}\right)^{-1}
\end{equation}
(resp. union nonexpansive).
\end{enumerate}
\end{proposition}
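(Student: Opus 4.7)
For each of the three parts I will exhibit an explicit representation of the resulting operator in the form demanded by Definition~\ref{def:union averaged}, namely $T(x)=\{S_k(x):k\in\psi(x)\}$, built from the given decompositions $T_j(x)=\{T_{j,k}(x):k\in\varphi_j(x)\}$ with finite index sets $I_j$, $\alpha_j$-averaged (resp.\ nonexpansive) operators $T_{j,k}$, and osc nonempty-valued selectors $\varphi_j$. Three things must then be verified in each case: the composite operators $S_k$ belong to the right class with the claimed averagedness constant, the new selector $\psi$ is osc with nonempty values, and the composite index set is finite. A preliminary observation I will use in part~\ref{it:UAN union} is that every $\beta$-averaged operator is $\alpha$-averaged for each $\alpha\in[\beta,1)$: via Proposition~\ref{prop:AN characterizations}, write $S=(1-\beta)\Id+\beta R$ with $R$ nonexpansive and rearrange to $S=(1-\alpha)\Id+\alpha R'$, where $R':=\tfrac{\alpha-\beta}{\alpha}\Id+\tfrac{\beta}{\alpha}R$ is a convex combination of nonexpansive maps and hence nonexpansive.

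For \ref{it:UAN union}, set $I:=\bigsqcup_{j\in J}I_j$ and $\psi(x):=\{(j,k):j\in J,\,k\in\varphi_j(x)\}$, keeping the operators as they are. Each $T_{j,k}$ is then $\max_j\alpha_j$-averaged by the observation above, and $\psi$ is osc with nonempty values as a disjoint union of the $\varphi_j$. For \ref{it:UAN convex combination}, set $I:=I_1\times\cdots\times I_m$ with $S_k(x):=\sum_{j\in J}\omega_jT_{j,k_j}(x)$ for $k=(k_1,\dots,k_m)$, and take $\psi(x):=\varphi_1(x)\times\cdots\times\varphi_m(x)$. Each $S_k$ is $\alpha$-averaged with $\alpha=\sum_j\omega_j\alpha_j$ by Proposition~\ref{prop:AN closedness}\ref{it:AN convex combination}, $\psi$ is osc by Proposition~\ref{prop:osc_product}, and a direct inclusion check confirms $T(x)=\{S_k(x):k\in\psi(x)\}$. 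The nonexpansive variants of both parts are immediate, since the same constructions apply together with the nonexpansive versions of Proposition~\ref{prop:AN closedness}.

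For \ref{it:UAN composition}, again take $I:=I_1\times\cdots\times I_m$ and set $S_k:=T_{m,k_m}\circ\cdots\circ T_{1,k_1}$, which is $\alpha$-averaged with the constant in~\eqref{eq:composition alpha} by Proposition~\ref{prop:AN closedness}\ref{it:AN compositions alpha}. The key step is to write down the correct, iterate-dependent selector
\[
\psi(x):=\bigl\{(k_1,\dots,k_m):k_1\in\varphi_1(x),\ k_\ell\in\varphi_\ell(T_{\ell-1,k_{\ell-1}}\circ\cdots\circ T_{1,k_1}(x))\text{ for }2\leq\ell\leq m\bigr\},
\]
which is nonempty-valued by greedy selection and whose defining property yields $T(x)=\{S_k(x):k\in\psi(x)\}$ by a straightforward induction on $m$. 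To verify osc I will argue directly on the graph: if $x_n\to\bar{x}$ and $(k_1^n,\dots,k_m^n)\in\psi(x_n)$ converges to $(\bar{k}_1,\dots,\bar{k}_m)$, then finiteness of $I$ forces the tuple to be eventually constant, after which the defining conditions pass to the limit coordinate-by-coordinate using the osc of each $\varphi_\ell$ and the Lipschitz continuity of the partial compositions $T_{\ell-1,\bar{k}_{\ell-1}}\circ\cdots\circ T_{1,\bar{k}_1}$. The main obstacle is precisely this nested dependency; once the correct definition of $\psi$ is in place, no analytic tools beyond those already in Section~\ref{s:preliminaries} are required, and the difficulty is purely combinatorial bookkeeping.
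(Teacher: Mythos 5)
Your proposal is correct and follows essentially the same route as the paper: the same disjoint-union, product, and nested selectors, with averagedness delegated to Proposition~\ref{prop:AN closedness} and outer semicontinuity obtained by the eventual-constancy-of-indices argument (the paper packages the latter via Propositions~\ref{prop:osc_product} and~\ref{prop:osc_composition}, while you argue directly on the graph, but the content is identical). Your explicit justification that a $\beta$-averaged operator is $\alpha$-averaged for $\alpha\geq\beta$ is a detail the paper only asserts in passing, and it is handled correctly.
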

\begin{proof}
First note that as $T_j$ is union $\alpha_j$-averaged nonexpansive (resp. union nonexpansive), by definition, there exists a finite index set $I_j$, an osc map $\varphi_j\colon X\setto I_j$ and a collection of $\alpha_j$-averaged nonexpansive (resp.\ nonexpansive) operators $\{T_{j,i_j}\}_{i_j\in I_j}$ on $X$ such that $T_j$ can be expressed as
\begin{equation*}
\forall x\in X,\quad T_j(x) =\{T_{j,i_j}(x): i_j\in \varphi_j(x)\}.
\end{equation*}

\ref{it:UAN union}:~For all $x\in X$, the definition of $T$ yields
\begin{equation*}
T(x) =\bigcup_{j\in J} T_j(x) =\{T_{j,{i_j}}(x): j\in J,i_j\in\varphi_j(x)\} =\{T_{j,i}(x): (j,i)\in \varphi(x)\}
\end{equation*}
where $\varphi\colon X\setto J\times \cup_{j\in J} I_j$ is defined as 
\begin{equation*}
\varphi(x) :=\{(j,i): j\in J, i\in \varphi_j(x)\}.
\end{equation*}
Note that $T_{j,i}$ is $\alpha$-averaged nonexpansive (resp.\ nonexpansive) for each $j\in J$ and each $i\in I_j$ since $\alpha\geq \alpha_j$, hence we only need to prove osc of $\varphi$. To this end, consider $x_n\to x$ in $X$ and $(j_n,i_n)\to (j,i)$ in $J\times \cup_{j\in J} I_j$ with $(j_n,i_n)\in \varphi(x_n)$. Since $J$ is finite and $j_n\to j$, there exists $n_0\in \mathbb{N}$ such that $j_n =j$ for all $n\geq n_0$, and we therefore have $i_n\in\varphi_{j_n}(x) =\varphi_j(x)$ for $n\geq n_0$. Outer semicontinuity of $\varphi_j$ implies that $i\in \varphi_j(x)$ and hence that $(j,i)\in \varphi(x)$.  Consequently, $\varphi$ is osc, and $T$ is thus union $\alpha$-averaged nonexpansive (resp. union nonexpansive).

\ref{it:UAN convex combination}:~The definition of the Minkowski sum gives that
\begin{equation*}
\sum_{j\in J} \omega_jT_j(x)
 =\sum_{j\in J} \omega_j\left\{T_{j,i_j}(x): i_j\in \varphi_j(x)\right\} 
 =\left\{\sum_{j\in J} \omega_jT_{j,i_j}(x): (i_1,\dots,i_m)\in \varphi(x)\right\},
\end{equation*} 
where $\varphi\colon X\setto I_1\times\dots\times I_m$ is defined as $\varphi(x) :=\varphi_1(x)\times\dots\times\varphi_m(x)$. Outer semicontinuity of $\varphi$ follows from Proposition~\ref{prop:osc_product}. Since $T_{j,i_j}$ is $\alpha_j$-averaged nonexpansive (resp.\ nonexpansive) for each $i_j\in I_j$ and $j\in J$, Proposition~\ref{prop:AN closedness}\ref{it:AN convex combination} implies that $\sum_{j\in J} \omega_j T_{j,i_j}$ is $\alpha$-averaged nonexpansive, for all $(i_1,\dots,i_m)\in I_1\times\dots\times I_m$, with $\alpha$ given by \eqref{eq:convex combination alpha} (resp.\ union nonexpansive) which completes the proof.

\ref{it:UAN composition}:~Using the definition of operator composition, we deduce that
\begin{equation*}
(T_m\circ \dots \circ T_1)(x) =\left\{(T_{m,i_m}\circ\dots\circ T_{1,i_1})(x): (i_1,\dots,i_m)\in \varphi(x)\right\}
\end{equation*}   
where $\varphi\colon X\setto I_1\times\dots\times I_m$ is given by
\begin{equation*}
\varphi(x) :=\{(i_1,\dots,i_m): i_1\in \varphi_1(x),\, i_2\in (\varphi_2\circ T_{1,i_1})(x),\, \dots, i_m\in (\varphi_m\circ T_{m-1,i_{m-1}}\circ\dots\circ T_{1,i_1})(x)\}.
\end{equation*}
To show that $\varphi$ is osc, consider sequences $x_n\to x$ and $i_n =(i_{1,n},\dots,i_{m,n})\to i =(i_1,\dots,i_m)$ such that $i_n\in\varphi(x_n)$. Since $I_1\times\dots\times I_m$ is finite, there exists an $n_0\in\mathbb{N}$ such that $i_n =i$ for all $n\geq n_0$. Then, for all $n\geq n_0$, $i =(i_1,\dots,i_m)\in \varphi(x_n)$, that is,
\begin{equation*}
i_{j+1}\in\begin{cases}
\phantom{(}\varphi_1(x_n) &\text{if~} j =0, \\
(\varphi_{j+1}\circ T_{j,i_j}\circ \dots \circ T_{1,i_1})(x_n) & \text{if~} j\in J\setminus\{m\}.
\end{cases}
\end{equation*}
Then, osc of $\varphi$ follows by combining Propositions~\ref{prop:osc_product}~\&~\ref{prop:osc_composition}, noting that $T_{j,i_j}$ is continuous, for each $j\in J\setminus\{m\}$.
Finally, since $T_{j,i_j}$ is $\alpha_j$-averaged nonexpansive (resp. nonexpansive) for each $i_j\in I_j$ and $j\in J$, Proposition~\ref{prop:AN closedness}\ref{it:AN compositions alpha} implies that $(T_{m,i_m}\circ\dots\circ T_{1,i_1})$ is $\alpha$-averaged nonexpansive with $\alpha$ given by \eqref{eq:composition alpha} (resp. nonexpansive) which completes the proof.
\end{proof}

For set-valued operators such as those introduced in Definition~\ref{def:union averaged}, we distinguish two different notions for fixed points which are both the same in the single-valued case. The \emph{fixed point set} of $T$ is denoted by $\Fix T :=\{x:x\in T(x)\}$, and the \emph{strong fixed point set} of $T$ is given by $\StrFix T :=\{x:T(x) =\{x\}\}$.

In the following proposition, we take a closer look the structure of a class of set-valued operators which includes union averaged nonexpansive operators as a special case. Given a set-valued operator $T\colon X\setto X$, its \emph{single-valued set}, denoted by 
\begin{equation*}
\Sing T :=\{x\in X: T(x)\text{~is a singleton}\},
\end{equation*}
is the set of points at which $T$ is single-valued.
\begin{proposition}[Active selectors, fixed points, single-valuedness]\label{prop:fixed points}
Let $T\colon X\setto X$ be a set-valued operator given by
\begin{equation*}
\forall x\in X,\quad T(x) =\{T_i(x): i\in \varphi(x)\}
\end{equation*}
where $I$ is a finite index set, $\varphi\colon X\setto I$ is an osc operator with nonempty values, and $\{T_i\}_{i\in I}$ is a collection of single-valued operators on $X$. The following assertions hold.
\begin{enumerate}[label =(\alph*)]
\item\label{it:fixed points_X} For each $i\in I$, the set $\varphi^{-1}(i)$ is closed. The space $X$ can be represented as
\begin{equation}
\label{eq:X is a union}
X =\bigcup_{i\in I}\varphi^{-1}(i).
\end{equation}    
\item\label{it:fixed points_Fix} $x\in\Fix T$ if and only if there exists $i\in I$ such that $x\in\varphi^{-1}(i)\cap\Fix T_i$. Consequently, we have
\begin{equation*}
\Fix T =\bigcup_{i\in I}\left[\varphi^{-1}(i)\cap\Fix T_i\right]. 
\end{equation*}
\item\label{it:fixed points_StrFix} $x\in\StrFix T$ if and only if 
\begin{equation*}
x\in \bigcap_{i\in\varphi(x)}\Fix T_i =\bigcap_{i\in\varphi(x)} \left[\varphi^{-1}(i)\cap\Fix T_i\right].
\end{equation*}
\item\label{it:fixed points_StrFixSing} The strong fixed point set is the intersection of the fixed point set and the single-valued set, that is,
\begin{equation*}
\StrFix T =\Fix T\cap \Sing T.
\end{equation*}
\item\label{it:fixed points_Sing} If in addition $\{T_i\}_{i\in I}$ is a collection of continuous operators, then $T$ is ocs. Consequently,
\begin{equation*}
\Sing T =\left\{x\in X:\Limsup_{x'\to x}T(x')\text{ is singleton}\right\}.
\end{equation*}
\end{enumerate}
\end{proposition}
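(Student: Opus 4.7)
The plan is to run through the five assertions in order, peeling them off one by one; each is essentially a direct unpacking of definitions together with the outer semicontinuity of $\varphi$, so I expect no serious obstacles, only careful bookkeeping.

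For \ref{it:fixed points_X}, I would argue closedness of $\varphi^{-1}(i) =\{x : i\in\varphi(x)\}$ directly from osc of $\varphi$: if $x_n\to x$ with $i\in\varphi(x_n)$, take the constant sequence $y_n =i\to i$ in $I$ and apply the Limsup definition to conclude $i\in\varphi(x)$. The representation $X =\bigcup_{i\in I}\varphi^{-1}(i)$ is then an immediate consequence of $\varphi$ having nonempty values. For \ref{it:fixed points_Fix}, I would simply unwind $\Fix T$: $x\in\Fix T$ means $x\in T(x) =\{T_i(x):i\in\varphi(x)\}$, which is precisely the existence of $i\in\varphi(x)$ (equivalently $x\in\varphi^{-1}(i)$) with $T_i(x) =x$.

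Part \ref{it:fixed points_StrFix} is analogous: $x\in\StrFix T$ means $T(x) =\{x\}$, and since $\varphi(x)\neq\emptyset$ this is equivalent to $T_i(x) =x$ for every $i\in\varphi(x)$; rewriting $i\in\varphi(x)$ as $x\in\varphi^{-1}(i)$ then gives the intersection form. Part \ref{it:fixed points_StrFixSing} is then a two-line set-theoretic observation: if $T(x) =\{x\}$ then trivially $x\in T(x)\cap\Sing T$, and conversely, if $x\in T(x)$ and $T(x)$ is a singleton, the only element of $T(x)$ must be $x$, so $T(x) =\{x\}$.

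For \ref{it:fixed points_Sing}, the osc of $T$ is a direct application of Proposition~\ref{prop:osc_composition} taking $\phi =\Id$ and the same active selector $\varphi$; continuity of the $T_i$'s is used here. To derive the formula for $\Sing T$, I would observe that the inclusion $T(x)\subseteq\Limsup_{x'\to x}T(x')$ always holds for any set-valued map, simply by taking the constant sequences $x_n\equiv x$ and $y_n\equiv y$ for each $y\in T(x)$; combined with the osc inequality $\Limsup_{x'\to x}T(x')\subseteq T(x)$, this yields the equality $T(x) =\Limsup_{x'\to x}T(x')$, so that $T(x)$ is a singleton exactly when the Limsup is.

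The only conceptual subtlety I anticipate is in the last step: one must remember that the paper's definition of the Painlev\'e--Kuratowski outer limit does not exclude constant sequences $x_n =x$, which is what makes $T(x)\subseteq\Limsup_{x'\to x}T(x')$ automatic and lets osc be upgraded to equality. Everything else is essentially a definition chase, with Proposition~\ref{prop:osc_composition} providing the only nontrivial external input.
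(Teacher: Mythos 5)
Your proposal is correct and follows essentially the same route as the paper's proof: parts \ref{it:fixed points_X}--\ref{it:fixed points_StrFixSing} are the same definition-unwinding, and your observation that constant sequences make $T(x)\subseteq\Limsup_{x'\to x}T(x')$ automatic is exactly the (implicit) step the paper uses to upgrade osc to equality in part \ref{it:fixed points_Sing}. The only cosmetic difference is that for the osc of $T$ you invoke Proposition~\ref{prop:osc_composition} with $\phi=\Id$, whereas the paper reruns the underlying pigeonhole-plus-continuity argument inline; both amount to the same reasoning.
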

\begin{proof}
\ref{it:fixed points_X}: Let $i\in I$ and consider a sequence $(x_n)_{n\in\mathbb{N}}\subseteq\varphi^{-1}(i)$ such that $x_n\to\overline{x}$. The definition of the inverse implies that $i\in\varphi(x_n)$ for all $n\in\mathbb{N}$ and, since $\varphi$ is osc, it holds that $i\in \varphi(\overline{x})$, which shows that $\varphi^{-1}(i)$ is closed and proves the first claim. The fact that \eqref{eq:X is a union} follows from the assumption that $\varphi(x)\neq \varnothing$ for all $x\in X$.

\ref{it:fixed points_Fix}: $x\in\Fix T\iff\exists i\in \varphi(x)$ such that $x\in\Fix T_i\iff\exists i\in I$ such that $x\in\varphi^{-1}(i)\cap\Fix T_i.$ 

\ref{it:fixed points_StrFix}: $x\in\StrFix T\iff x\in\Fix T_i$ for all $i\in\varphi(x)\iff x\in\varphi^{-1}(i)\cap\Fix T_i$ for all $i\in\varphi(x)$.

\ref{it:fixed points_StrFixSing}: Immediate from the respective definitions.

\ref{it:fixed points_Sing}: Fix $x\in X$. We first show that $T$ is osc. To this end, take $x_n\to x$ and $y_n\to y$ with $y_n\in T(x_n)$. Then the definition of $T$ ensures the existence of sequence $i_n\in \varphi(x_n)$ such that $y_n =T_{i_n}(x_n)\in T(x_n)$. Using the pigeonhole princple, we pass to a subsequence so that $y_{k_n} =T_i(x_{k_n})$ for fixed $i\in I$. The osc of $\varphi$ implies that $i\in\varphi(x)$ and continuity of $T_i$ gives
\begin{equation*}
y =\lim_{n\to\infty}y_{k_n} =\lim_{n\to\infty}T_i(x_{k_n}) =T_i(x)\in T(x),
\end{equation*}
which proves osc. The claimed formula for the singleton set follows since $x\in\Sing T\iff T(x)$ is singleton, and osc implies that $T(x) =\Limsup_{x'\to x}T(x')$. 
\end{proof}

\section{Convergence of fixed point algorithms}\label{s:convergence}
In this section, we prove our main result regarding local convergence of fixed point iterations based on union averaged nonexpansive operators. More precisely, given a union $\alpha$-averaged nonexpansive $T\colon X\setto X$ defined by $x\mapsto T(x) :=\{T_i(x):i\in\varphi(x)\}$, we study the behaviour of iterations of the form
\begin{equation*}
x_0\in X \quad\text{and}\quad \forall n\in\mathbb{N},\ x_{n+1}\in \left((1-\lambda_n)\Id+\lambda_n T\right)(x_n),
\end{equation*} 
where $(\lambda_n)_{n\in\mathbb{N}}\subseteq (0,1/\alpha]$ and $\Id$ denotes the identity operator. To do so, we first study the following closely related iterations given by
\begin{equation*}
x_0\in X \quad\text{and}\quad \forall n\in\mathbb{N},\ x_{n+1}\in \left((1-\lambda_n)\Id+\lambda_n T_{i_n}\right)(x_n),
\end{equation*} 
where each element in $I$ appears infinitely often in the sequence $(i_n)_{n\in\mathbb{N}}\subseteq I$.

The condition that each element in $I$ must appear infinitely often in a sequence is $(i_n)_{n\in\mathbb{N}}\subseteq I$ is generalized in the following definition.
\begin{definition}[Admissible sequences]
Let $I$ and $I^*$ be nonempty finite sets with $I^*\subseteq I$. A sequence $(i_n)_{n\in\mathbb{N}}\subseteq I$ is \emph{admissible in $I^*$} if every element of $I^*$ appears  infinitely often in $(i_n)_{n\in\mathbb{N}}$.
\end{definition}

\subsection{Krasnosel'ski\u{\i}--Mann iterations with admissible control}
We begin with the following variation of \cite[Theorem~1]{Els92}. Although the proof is straightforward, we include it for the sake of completeness.

\begin{theorem}[Krasnosel'ski\u{\i}--Mann iterations with admissible control]
\label{thm:KM iterations with admissible control}
Let $I$ be a finite index set and let $\{T_i\}_{i\in I}$ be a collection of nonexpansive operators on $X$ with a common fixed point. Define a sequence $(x_n)_{n\in\mathbb{N}}$ with starting point $x_0\in X$ according to
\begin{equation*}
\forall n\in\mathbb{N},\quad x_{n+1} :=(1-\lambda_n)x_n+\lambda_n T_{i_n}(x_n),
\end{equation*}
where $(i_n)_{n\in\mathbb{N}}$ is admissible in $I$, and $(\lambda_n)_{n\in\mathbb{N}}$ is in $(0, 1]$ with $\liminf_{n\to\infty}\lambda_n(1-\lambda_n)> 0$.
Then $(x_n)_{n\in\mathbb{N}}$ converges to a point $\overline{x}\in\cap_{i\in I} \Fix T_i$.
\end{theorem}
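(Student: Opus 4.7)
The plan is to follow the classical Krasnosel'ski\u{\i}--Mann proof structure, with the variable index $i_n$ handled via admissibility at the cluster-point stage. I would first fix any common fixed point $p \in \cap_{i \in I} \Fix T_i$. Applying the convex combination identity
\[\|\lambda a + (1-\lambda) b - p\|^2 = \lambda\|a - p\|^2 + (1-\lambda)\|b - p\|^2 - \lambda(1-\lambda)\|a - b\|^2\]
to $a = T_{i_n}(x_n)$, $b = x_n$, and $\lambda = \lambda_n$, together with the nonexpansiveness of $T_{i_n}$ and $T_{i_n}(p) = p$ (so $\|T_{i_n}(x_n) - p\| \leq \|x_n - p\|$), yields the Fej\'er-type inequality
\[\|x_{n+1} - p\|^2 \leq \|x_n - p\|^2 - \lambda_n(1-\lambda_n)\|T_{i_n}(x_n) - x_n\|^2.\]
This shows $(\|x_n - p\|)_{n \in \mathbb{N}}$ is nonincreasing and converges to some $L \geq 0$, so in particular $(x_n)$ is bounded.

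Telescoping gives $\sum_{n = 0}^\infty \lambda_n(1-\lambda_n)\|T_{i_n}(x_n) - x_n\|^2 \leq \|x_0 - p\|^2 - L^2 < \infty$, which combined with $\liminf_n \lambda_n(1-\lambda_n) > 0$ forces $\|T_{i_n}(x_n) - x_n\| \to 0$, and consequently $\|x_{n+1} - x_n\| = \lambda_n\|T_{i_n}(x_n) - x_n\| \to 0$ (asymptotic regularity). Since $X$ is finite-dimensional, $(x_n)$ has at least one cluster point $\bar{x}$; pass to a subsequence $x_{n_k} \to \bar{x}$.

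The key step is to verify $\bar{x} \in \Fix T_i$ for every $i \in I$. For one such index, since $(i_{n_k})_{k \in \mathbb{N}}$ lies in the finite set $I$, by pigeonhole one can extract a sub-subsequence on which $i_{n_k}$ is constantly equal to some $i_\ast \in I$; continuity of $T_{i_\ast}$ and $\|T_{i_n}(x_n) - x_n\| \to 0$ then give $T_{i_\ast}(\bar{x}) = \bar{x}$. For the remaining indices $i \neq i_\ast$, I would use admissibility together with asymptotic regularity to shift from $(n_k)$ to nearby indices in $N_i := \{n : i_n = i\}$ while preserving convergence to $\bar{x}$, so that continuity of $T_i$ again forces $T_i(\bar{x}) = \bar{x}$. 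Once $\bar{x} \in \cap_{i \in I} \Fix T_i$ is established, applying the Fej\'er inequality with $\bar{x}$ in place of $p$ shows $\|x_n - \bar{x}\|$ is nonincreasing, and since $\|x_{n_k} - \bar{x}\| \to 0$ along the subsequence, the full sequence converges to $\bar{x}$.

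The main obstacle is this cluster-point step. The admissibility definition in the paper only guarantees each $i \in I$ appears infinitely often, with no a priori bound on the gaps between consecutive occurrences, so transferring a convergent subsequence to one supported on $N_i$ while retaining the same limit is the delicate part and requires carefully combining asymptotic regularity, the finiteness of $I$, and the boundedness of $(x_n)$.
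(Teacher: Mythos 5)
Your argument is correct and matches the paper's proof up to and including the Fej\'er inequality, boundedness, $\|T_{i_n}(x_n)-x_n\|\to 0$, the extraction of a cluster point $\bar x$ with $T_{i_\ast}(\bar x)=\bar x$ for one index $i_\ast$, and the final step that upgrades subsequential to full convergence once $\bar x\in\cap_{i\in I}\Fix T_i$ is known. However, the step you yourself flag as ``delicate'' is a genuine gap, and the route you sketch for it does not work: asymptotic regularity only gives $\|x_{n+1}-x_n\|\to 0$, while admissibility places no bound on the gap between $n_k$ (where $x_{n_k}\to\bar x$) and the next occurrence of a given index $i$, so the cumulative drift of the iterates over that gap need not be small. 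You cannot ``shift'' the convergent subsequence onto $N_i=\{n: i_n=i\}$ by this device, and you do not supply an alternative.

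The missing idea is to use Fej\'er monotonicity \emph{with respect to $\bar x$ itself}, restricted to those operators that already fix $\bar x$. Argue by contradiction: if $\bar x\notin\cap_{i\in I}\Fix T_i$, then some $T_\ell$ with $\bar x\notin\Fix T_\ell$ is applied infinitely often, so after refining the subsequence $(k_n)$ one may assume each block $\{k_n,\dots,k_{n+1}-1\}$ contains an index $p$ with $\bar x\notin\Fix T_{i_p}$; set $t_n:=\min\{p\in\{k_n,\dots,k_{n+1}-1\}:\bar x\notin\Fix T_{i_p}\}$. For every $p$ with $k_n\le p<t_n$ the operator $T_{i_p}$ fixes $\bar x$ and is nonexpansive, hence $\|x_{p+1}-\bar x\|\le(1-\lambda_p)\|x_p-\bar x\|+\lambda_p\|T_{i_p}(x_p)-\bar x\|\le\|x_p-\bar x\|$, and by minimality of $t_n$ these inequalities chain to give $\|x_{t_n}-\bar x\|\le\|x_{k_n}-\bar x\|\to 0$. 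Passing to a further subsequence on which $i_{t_n}=\ell$ is constant, the relations $x_{t_n}\to\bar x$ and $x_{t_n}-T_\ell(x_{t_n})\to 0$ together with continuity of $T_\ell$ force $\bar x\in\Fix T_\ell$, contradicting the definition of $t_n$. This ``first bad index'' argument is precisely how the paper (following Elsner--Koltracht--Neumann) closes the proof; without it, or an equivalent device, your proof is incomplete.
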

\begin{proof}
Since $\liminf_{n\to\infty}\lambda_n(1-\lambda_n)> 0$, there exists $n_0\in \mathbb{N}$ such that $\inf_{n\geq n_0}\lambda_n(1-\lambda_n)> 0$. By relabeling if necessary, we may assume without loss of generality that $\varepsilon :=\inf_{n\in\mathbb{N}}\lambda_n(1-\lambda_n)> 0$. 
Let $x\in \cap_{i\in I} \Fix T_i$ be arbitrary. Then, for all $n\in\mathbb{N}$, nonexpansiveness of $T_{i_n}$ yields
\begin{subequations} 
\begin{align*}
\|x_{n+1}-x\|^2& =\|(1-\lambda_n)(x_n-x)+\lambda_n(T_{i_n}(x_n)-x)\|^2 \\
& =(1-\lambda_n)\|x_n-x\|^2+\lambda_n\|T_{i_n}(x_n)-x\|^2-\lambda_n(1-\lambda_n)\|x_n-T_{i_n}(x_n)\|^2 \\
&\leq \|x_n-x\|^2-\varepsilon\|x_n-T_{i_n}(x_n)\|^2.
\end{align*}
\end{subequations}
It follows that $(\|x_n-x\|^2)_{n\in\mathbb{N}}$ is monotone nonincreasing, and hence convergent. Consequently, the sequence $(x_n)_{n\in\mathbb{N}}$ is bounded and 
\begin{equation}\label{eq:pre-asymp}
x_n-T_{i_n}(x_n)\to 0 \quad\text{as~} n\to\infty.
\end{equation}
Now, let $\overline{x}\in X$ be a cluster point of $(x_n)_{n\in\mathbb{N}}$. Then $(x_n)_{n\in\mathbb{N}}$ contains a convergent subsequence, say $(x_{k_n})_{n\in\mathbb{N}}$, with limit $\overline{x}$. 
We claim that $\overline{x}$ is an element of $\cap_{i\in I} \Fix T_i$. To this end suppose, by way of a contradiction, that $\overline{x}\not\in\cap_{i\in I} \Fix T_i$.
By passing to a further subsequence if necessary, we deduce that the sequence $(t_n)_{n\in\mathbb{N}}$ defined by
\begin{equation*}
t_n :=\min\{p\in\{k_n,\dots,k_{n+1}-1\}:\overline{x}\not\in\Fix T_{i_p}\}, 
\end{equation*}
is well defined.
Since $I$ is finite, by passing to yet another subsequence if necessary, we may assume that $i_{t_n} =\ell$ for all $n\in\mathbb{N}$ for some fixed index $\ell\in I$. 
Together with \eqref{eq:pre-asymp}, we deduce that
\begin{equation*}
x_{t_n}\to \overline{x} \text{~~~and~~~} (\Id-T_\ell)(x_{t_n})\to 0 \quad\text{as~} n\to\infty,
\end{equation*}
which, due to nonexpansiveness of $T_\ell$, implies that $\overline{x}\in\Fix T_\ell$. 
Thus a contradiction is obtained, and conclude that $\overline{x}\in\cap_{i\in I} \Fix T_i$. We then have that $(\|x_n-\overline{x}\|)_{n\in\mathbb{N}}$ is monotone nonincreasing and $\|x_{k_n}-\overline{x}\|\to 0$, and thus the conclusion follows.
\end{proof}

\begin{corollary} 
Let $I$ be a finite index set and let $\{T_i\}_{i\in I}$ be $\alpha_i$-averaged nonexpansive operators on $X$ with a common fixed point. Define a sequence $(x_n)_{n\in\mathbb{N}}$ with starting point $x_0\in X$ according to
\begin{equation*}
\forall n\in\mathbb{N},\quad x_{n+1} =(1-\lambda_n)x_n+\lambda_n T_{i_n}(x_n),
\end{equation*}
where $(i_n)_{n\in\mathbb{N}}$ is an admissible sequence in $I$, and $(\lambda_n)_{n\in\mathbb{N}}$ is a sequence satisfying $\lambda_n\in (0, 1/\alpha_{i_n}]$ for all $n\in\mathbb{N}$ and $\liminf_{n\to\infty}\lambda_n(1-\alpha_{i_n}\lambda_n)> 0$.
Then $(x_n)_{n\in\mathbb{N}}$ converges to a point $\overline{x}\in\cap_{i\in I} \Fix T_i$.
\end{corollary}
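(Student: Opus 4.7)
The plan is to reduce this corollary to Theorem~\ref{thm:KM iterations with admissible control} by passing from each $\alpha_i$-averaged operator $T_i$ to its associated nonexpansive operator $R_i$ via Proposition~\ref{prop:AN characterizations}, and then rescaling the step sizes accordingly. For each $i\in I$, write $T_i = (1-\alpha_i)\Id + \alpha_i R_i$ with $R_i\colon X\to X$ nonexpansive; a one-line check shows $\Fix T_i = \Fix R_i$, so the operators $\{R_i\}_{i\in I}$ inherit a common fixed point from $\{T_i\}_{i\in I}$.

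Substituting this representation into the iteration gives
\begin{equation*}
x_{n+1} = (1-\lambda_n)x_n + \lambda_n\bigl[(1-\alpha_{i_n})x_n + \alpha_{i_n}R_{i_n}(x_n)\bigr] = (1-\mu_n)x_n + \mu_n R_{i_n}(x_n),
\end{equation*}
where $\mu_n := \lambda_n\alpha_{i_n}$. The constraint $\lambda_n\in(0,1/\alpha_{i_n}]$ immediately yields $\mu_n\in(0,1]$. For the asymptotic regularity condition, observe that
\begin{equation*}
\mu_n(1-\mu_n) = \alpha_{i_n}\lambda_n(1-\alpha_{i_n}\lambda_n) \geq \Bigl(\min_{i\in I}\alpha_i\Bigr)\lambda_n(1-\alpha_{i_n}\lambda_n),
\end{equation*}
whose right-hand side has a positive liminf: finiteness of $I$ guarantees $\min_{i\in I}\alpha_i > 0$, and by hypothesis $\liminf_n \lambda_n(1-\alpha_{i_n}\lambda_n) > 0$.

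The admissibility of $(i_n)$ transfers unchanged to the rewritten iteration, so Theorem~\ref{thm:KM iterations with admissible control} applied to $\{R_i\}_{i\in I}$ with step sizes $(\mu_n)$ will deliver convergence of $(x_n)$ to some $\overline{x}\in \cap_{i\in I}\Fix R_i = \cap_{i\in I}\Fix T_i$. I do not anticipate any genuine obstacle here: every step is a routine bookkeeping translation, and the mildest subtlety is invoking $\min_{i\in I}\alpha_i > 0$ to pass from the hypothesis on $\lambda_n(1-\alpha_{i_n}\lambda_n)$ to the required condition on $\mu_n(1-\mu_n)$.
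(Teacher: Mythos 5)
Your proposal is correct and follows essentially the same route as the paper: rewrite each $T_{i}$ as $(1-\alpha_i)\Id+\alpha_i R_i$ with $R_i$ nonexpansive, note $\Fix R_i=\Fix T_i$, and apply Theorem~\ref{thm:KM iterations with admissible control} to $\{R_i\}_{i\in I}$ with step sizes $\mu_n=\alpha_{i_n}\lambda_n$. Your explicit verification of $\liminf_n\mu_n(1-\mu_n)>0$ via $\min_{i\in I}\alpha_i>0$ is a detail the paper leaves implicit, but the argument is the same.
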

\begin{proof}
For each $i\in I$, by \cite[Proposition~4.35]{BauCom17}, $R_i :=(1-1/\alpha_i)\Id+(1/\alpha_i)T_i$ is nonexpansive. We also have that $\Fix R_i =\Fix T_i$ and that, for all $n\in\mathbb{N}$, $x_{n+1} =(1-\alpha_{i_n}\lambda_n)x_n+\alpha_{i_n}\lambda_n R_{i_n}(x_n)$. Now apply Theorem~\ref{thm:KM iterations with admissible control} to $\{R_i\}_{i\in I}$ and $(\alpha_{i_n}\lambda_n)_{n\in\mathbb{N}}$.
\end{proof}

\subsection{Convergence of union nonexpansive iterations}
Using the results of the previous subsection, we now turn our attention convergence of iterations based on union nonexpansive operators. Throughout this subsection, we fix a particular representation for the considered operator. Let $T\colon X\setto X$ denote a union nonexpansive (resp.\ union $\alpha$-averaged nonexpansive) operator which we assume to be represented as
\begin{equation}\label{eq:a particular uan op}
T(x) =\{T_i(x):i\in\varphi(x)\},
\end{equation}
where $I$ is a finite index set, $\{T_i\}_{i\in I}$ is a collection of nonexpansive (resp.\ $\alpha$-averaged nonexpansive) operators on $X$, and $\varphi\colon X\setto I$ is the osc, nonempty-valued active selector. Fixing this representation is convenient because, in general, the representation of a union nonexpansive (resp.\ union $\alpha$-averaged nonexpansive) operator need not be unique and allows us to avoid repetition.

Corresponding to the representation \eqref{eq:a particular uan op}, we define the \emph{radius of attraction} of $T$ at a point $x^*\in X$ as
\begin{equation}\label{eq:radius of attraction}
r(x^*;T) :=\sup\{\delta>0: \forall x\in \mathbb{B}(x^*;\delta),\ \varphi(x)\subseteq\varphi(x^*)\}
\end{equation}
Here we note that radius of attraction is nonzero for a union nonexpansive operator (and hence too for a union averaged nonexpansive operator). In fact, we have that
\begin{equation*}
\forall x^*\in X,\quad r(x^*;T)\in (0,+\infty]
\end{equation*}
as is shown in the following proposition.

\begin{proposition}\label{prop:prop2.1 Tam17}
Let $\varphi\colon X\setto I$ for a finite set $I$ and let $x^*\in X$. Then $\varphi$ is outer semicontinuous at $x^*$ if and only if there exists $\delta>0$ such that
\begin{equation*}
\forall x\in\mathbb{B}(x^*;\delta),\quad \varphi(x)\subseteq\varphi(x^*).
\end{equation*}
Consequently, if $T\colon X\setto X$ is a union nonexpansive operator, then $r(x^*;T)\in (0,+\infty]$. 
\end{proposition}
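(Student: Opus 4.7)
The plan is to prove the equivalence directly from the definition of outer semicontinuity, exploiting the crucial fact that $I$ is finite (hence discrete).

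For the ``if'' direction, assume such a $\delta>0$ exists. To verify osc of $\varphi$ at $x^*$, consider arbitrary sequences $x_n\to x^*$ in $X$ and $i_n\to i$ in $I$ with $i_n\in\varphi(x_n)$. Eventually $x_n\in\mathbb{B}(x^*;\delta)$, and the hypothesis yields $i_n\in\varphi(x^*)$ for all large $n$. Since $I$ carries the discrete topology (being finite), convergence $i_n\to i$ forces $i_n=i$ for all sufficiently large $n$, and therefore $i\in\varphi(x^*)$. This is exactly the definition of osc at $x^*$.

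For the ``only if'' direction, I will argue contrapositively. Suppose no such $\delta$ exists. Then for every $n\in\mathbb{N}$ there exists $x_n\in\mathbb{B}(x^*;1/n)$ with $\varphi(x_n)\not\subseteq\varphi(x^*)$, so one may choose $i_n\in\varphi(x_n)\setminus\varphi(x^*)$. Since $I\setminus\varphi(x^*)$ is a finite set, the pigeonhole principle allows us to pass to a subsequence along which $i_n$ is a constant value $i\in I\setminus\varphi(x^*)$. Along this subsequence we have $x_n\to x^*$, $i_n\to i$, and $i_n\in\varphi(x_n)$, but $i\notin\varphi(x^*)$, which contradicts osc of $\varphi$ at $x^*$.

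Finally, the consequence is immediate: if $T$ is union nonexpansive with representation $T(x)=\{T_i(x):i\in\varphi(x)\}$, then by \cref{def:union averaged} the active selector $\varphi\colon X\setto I$ is osc (in particular at $x^*$), so the first part yields some $\delta>0$ with $\varphi(x)\subseteq\varphi(x^*)$ for all $x\in\mathbb{B}(x^*;\delta)$. By the definition \eqref{eq:radius of attraction} of the radius of attraction, $r(x^*;T)\geq \delta>0$, so $r(x^*;T)\in(0,+\infty]$. I do not anticipate a genuine obstacle here; the only subtlety is remembering that finiteness of $I$ makes convergence in $I$ eventually stationary, which is what collapses the osc condition into the desired neighborhood inclusion.
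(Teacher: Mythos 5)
Your argument is correct. The paper itself does not prove this proposition directly --- it simply cites \cite[Proposition~1]{Tam17} --- so your self-contained proof fills in what the paper outsources, and it is the natural argument for that cited fact: the ``if'' direction uses that convergence in the finite (discrete) set $I$ is eventually stationary, and the ``only if'' direction is the standard contrapositive with a diagonal sequence $x_n\in\mathbb{B}(x^*;1/n)$ plus pigeonhole on the finite set $I\setminus\varphi(x^*)$. The only implicit step worth making explicit is that $\varphi(x_n)\not\subseteq\varphi(x^*)$ guarantees $\varphi(x_n)\setminus\varphi(x^*)\neq\varnothing$, so the choice of $i_n$ is legitimate; with that noted, the deduction of $r(x^*;T)\geq\delta>0$ from \eqref{eq:radius of attraction} is immediate as you say.
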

\begin{proof}
This follows from \cite[Proposition~1]{Tam17}.
\end{proof}

Furthermore, if the reference point $x^*$ in Proposition~\ref{prop:prop2.1 Tam17} is a strong fixed point of the underlying operator $T$, then $T$ satisfies the following quasi-nonexpansiveness properties.
\begin{proposition}[Radius of attraction at strong fixed points]\label{prop:radius}
Let $T\colon X\setto X$ be a union nonexpansive operator with $x^*\in\StrFix T$. Then $r :=r(x^*;T)\in (0,+\infty]$ and
\begin{equation}\label{eq:attraction nonexpansive}
\forall x\in \inte\mathbb{B}(x^*;r),\,\forall y\in T(x),\quad 
\|y-x^*\|\leq \|x-x^*\|.
\end{equation}
Furthermore, for any $\lambda\in (0, 1]$, we have
\begin{equation}\label{eq:attraction averaged}
\forall x\in \inte\mathbb{B}(x^*;r),\,\forall y\in (1-\lambda)x+\lambda T(x),\quad 
\|y-x^*\|^2+\frac{1-\lambda}{\lambda}\|x-y\|^2\leq \|x-x^*\|^2.
\end{equation}
\end{proposition}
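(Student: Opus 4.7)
The plan is to unfold the definitions and reduce everything to a statement about the individual selectors $T_i$ that witness the union nonexpansive representation of $T$.

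First I would note that the claim $r \in (0,+\infty]$ is immediate from Proposition~\ref{prop:prop2.1 Tam17}, which guarantees that the osc selector $\varphi$ is (locally) contained in $\varphi(x^*)$ on some ball around $x^*$. So the content is in the two inequalities.

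Next I would establish the key selector-level observation: for every $x\in\inte\mathbb{B}(x^*;r)$ we have $\varphi(x)\subseteq\varphi(x^*)$ by the very definition of the radius of attraction in \eqref{eq:radius of attraction}. Combined with Proposition~\ref{prop:fixed points}\ref{it:fixed points_StrFix} (since $x^*\in\StrFix T$), this gives $x^*\in\Fix T_i$ for every $i\in\varphi(x^*)$, and in particular for every $i\in\varphi(x)$. Consequently, any $y\in T(x)$ is of the form $y=T_i(x)=T_i(x)-T_i(x^*)+x^*$ for some such $i$, and nonexpansiveness of $T_i$ immediately yields \eqref{eq:attraction nonexpansive}.

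For \eqref{eq:attraction averaged}, I would write $y=(1-\lambda)x+\lambda T_i(x)$ for the same index $i\in\varphi(x)\subseteq\varphi(x^*)$ and apply the standard convex-combination identity
\begin{equation*}
\|(1-\lambda)a+\lambda b\|^2=(1-\lambda)\|a\|^2+\lambda\|b\|^2-\lambda(1-\lambda)\|a-b\|^2
\end{equation*}
with $a=x-x^*$ and $b=T_i(x)-x^*=T_i(x)-T_i(x^*)$. Bounding $\|T_i(x)-T_i(x^*)\|\le\|x-x^*\|$ by nonexpansiveness and using the identity $\|x-y\|=\lambda\|x-T_i(x)\|$ to convert $\lambda(1-\lambda)\|x-T_i(x)\|^2$ into $\frac{1-\lambda}{\lambda}\|x-y\|^2$ yields the desired inequality after rearrangement. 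The case $\lambda=1$ needs no special treatment since then the second term vanishes and the inequality reduces to \eqref{eq:attraction nonexpansive}.

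There is essentially no obstacle here: once the inclusion $\varphi(x)\subseteq\varphi(x^*)$ is invoked so that the selected $T_i$ actually fixes $x^*$, the result is the usual single-valued averaged-nonexpansive calculation applied pointwise to whichever branch $T_i$ produced the element $y$. The only thing one needs to be careful about is selecting the correct index $i$ consistently for both inequalities and ensuring $\lambda>0$ when dividing, which the hypothesis $\lambda\in(0,1]$ provides.
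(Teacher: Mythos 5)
Your proposal is correct and follows essentially the same route as the paper: reduce to the individual branches via $\varphi(x)\subseteq\varphi(x^*)$ and $x^*\in\bigcap_{i\in\varphi(x^*)}\Fix T_i$ (from Proposition~\ref{prop:fixed points}\ref{it:fixed points_StrFix}), then apply the single-valued nonexpansive/averaged estimates. The only cosmetic difference is that the paper cites the $\lambda$-averaged nonexpansiveness of $S_i:=(1-\lambda)\Id+\lambda T_i$ while you verify the same inequality directly with the convex-combination identity; both are equivalent and your computation is sound.
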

\begin{proof}
Since $\varphi$ is osc and its range, $I$, is a finite set, Proposition~\ref{prop:prop2.1 Tam17} implies that $r :=r(x^*;T)\in(0,+\infty]$. Since $x^*\in\StrFix T$, Proposition~\ref{prop:fixed points}\ref{it:fixed points_StrFix} implies that $x^*\in\cap_{i\in\varphi(x*)}\Fix T_i$ and, since each $T_i$ is nonexpansive, we have
\begin{equation}
\label{eq:nonexpansive}
\forall i\in\varphi(x^*),\, \forall x\in X,\quad \|T_i(x)-x^*\|^2\leq \|x-x^*\|^2.
\end{equation}
In particular, for any $x\in \inte\mathbb{B}(x^*;r)$ and $y\in T(x)$, there exists an $i\in \varphi(x)\subseteq \varphi(x^*)$ such that $y =T_i(x)$. Consequently, \eqref{eq:attraction nonexpansive} follows from \eqref{eq:nonexpansive}.

Furthermore, for any $\lambda\in (0, 1]$, the operator $S_i :=(1-\lambda)\Id+\lambda T_i$ is $\lambda$-averaged nonexpansive by Proposition~\ref{prop:averaged} and we therefore have that
\begin{equation}
\label{eq:averaged}
\forall i\in\varphi(x^*),\, \forall x\in X,\quad \|S_i(x)-x^*\|^2+\frac{1-\lambda}{\lambda}\|x-S_i(x)\|^2\leq \|x-x^*\|^2.
\end{equation}
In particular, if $x\in\inte\mathbb{B}(x^*;r)$ and $y\in (1-\lambda)x+\lambda T(x)$, then there exists $i\in \varphi(x)\subseteq \varphi(x^*)$ such that $y =S_i(x)$. As before, \eqref{eq:attraction averaged} follows from \eqref{eq:averaged}.
\end{proof}

We are now ready to prove our main results which establish local convergence of Krasnosel'ski\u{\i}--Mann iterations based on union nonexpansive and union averaged nonexpansive operators.
\begin{theorem}[Local convergence of union nonexpansive iterations]\label{thm:union}
Let $T\colon X\setto X$ be a union nonexpansive operator with $x^*\in\StrFix T$ and let $(\lambda_n)_{n\in\mathbb{N}}$ be a sequence in $(0, 1]$ with $\liminf_{n\to\infty}\lambda_n(1-\lambda_n)> 0$. Denote $r :=r(x^*;T)\in(0,+\infty]$ and consider a sequence $(x_n)_{n\in\mathbb{N}}$ with $x_0\in \inte\mathbb{B}(x^*;r)$ satisfying
\begin{equation*}
\forall n\in\mathbb{N},\quad x_{n+1}\in (1-\lambda_n)x_n+\lambda_n T(x_n).
\end{equation*} 
Then $(x_n)_{n\in\mathbb{N}}$ converges to a point $\overline{x}\in\Fix T\cap \mathbb{B}(x^*;r)$. 
\end{theorem}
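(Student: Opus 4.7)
The strategy is to reduce the problem to an application of Theorem~\ref{thm:KM iterations with admissible control} by identifying, along the tail of the iteration, a finite collection of nonexpansive operators which share the common fixed point $x^*$ and are driven by an admissible selection sequence.

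I would first show by induction that $(x_n)\subseteq\inte\mathbb{B}(x^*;r)$. Writing $x_{n+1}=(1-\lambda_n)x_n+\lambda_n T_{i_n}(x_n)$ for some selection $i_n\in\varphi(x_n)$, the inequality \eqref{eq:attraction averaged} of Proposition~\ref{prop:radius} yields, assuming $x_n\in\inte\mathbb{B}(x^*;r)$,
\begin{equation*}
\|x_{n+1}-x^*\|^2+\frac{1-\lambda_n}{\lambda_n}\|x_n-x_{n+1}\|^2\leq\|x_n-x^*\|^2,
\end{equation*}
so $\|x_{n+1}-x^*\|\leq\|x_n-x^*\|<r$, closing the induction.

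Since $x_n\in\inte\mathbb{B}(x^*;r)$ for every $n$, the definition of the radius of attraction forces $i_n\in\varphi(x_n)\subseteq\varphi(x^*)$, and because $\varphi(x^*)$ is finite the set
\begin{equation*}
J^*:=\{i\in\varphi(x^*): i_n=i \text{ for infinitely many } n\}
\end{equation*}
is a nonempty subset of $\varphi(x^*)$. I would choose $n_0\in\mathbb{N}$ large enough that $i_n\in J^*$ for all $n\geq n_0$, so that the tail $(i_n)_{n\geq n_0}$ is admissible in $J^*$. Moreover, by Proposition~\ref{prop:fixed points}\ref{it:fixed points_StrFix}, the strong fixed point satisfies $x^*\in\bigcap_{i\in\varphi(x^*)}\Fix T_i\subseteq\bigcap_{i\in J^*}\Fix T_i$, so $\{T_i\}_{i\in J^*}$ is a finite family of nonexpansive operators with a common fixed point.

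Applying Theorem~\ref{thm:KM iterations with admissible control} to the tail $(x_n)_{n\geq n_0}$ with the operators $\{T_i\}_{i\in J^*}$, admissible selection $(i_n)_{n\geq n_0}$, and step-sizes $(\lambda_n)_{n\geq n_0}$ yields $x_n\to\overline{x}$ for some $\overline{x}\in\bigcap_{i\in J^*}\Fix T_i$; the inclusion $\overline{x}\in\mathbb{B}(x^*;r)$ is immediate from $(x_n)\subseteq\inte\mathbb{B}(x^*;r)$. To upgrade $\overline{x}$ to a fixed point of $T$, I would pick any $i\in J^*$ together with the subsequence $(x_{k_n})$ satisfying $i_{k_n}=i$ and $x_{k_n}\to\overline{x}$; outer semicontinuity of $\varphi$ gives $i\in\varphi(\overline{x})$, and combined with $T_i(\overline{x})=\overline{x}$ we obtain $\overline{x}=T_i(\overline{x})\in T(\overline{x})$. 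The main obstacle is isolating the right subset $J^*\subseteq\varphi(x^*)$: the raw selection sequence $(i_n)$ need not be admissible in all of $\varphi(x^*)$, but restricting attention to those indices that actually appear infinitely often recovers exactly the admissibility hypothesis of Theorem~\ref{thm:KM iterations with admissible control} while keeping $x^*$ as common fixed point.
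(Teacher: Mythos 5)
Your proposal is correct and follows essentially the same route as the paper: confine the iterates to $\inte\mathbb{B}(x^*;r)$ via Proposition~\ref{prop:radius}, extract the set of indices appearing infinitely often (your $J^*$, the paper's $I^*$), invoke Proposition~\ref{prop:fixed points}\ref{it:fixed points_StrFix} to get the common fixed point $x^*$, apply Theorem~\ref{thm:KM iterations with admissible control}, and finish with outer semicontinuity of $\varphi$ to place $\overline{x}$ in $\Fix T$. Your explicit passage to a tail $n\geq n_0$ where only indices of $J^*$ occur is a slightly more careful bookkeeping of the admissibility hypothesis than the paper's phrasing, but it is the same argument.
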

\begin{proof}
We first observe from Proposition~\ref{prop:radius} that, for all $n\in\mathbb{N}$, $x_n\in \inte\mathbb{B}(x^*;r)$ and 
\begin{equation*}
\forall n\in\mathbb{N},\quad \|x_{n+1}-x^*\|^2+\frac{1-\lambda_n}{\lambda_n}\|x_n-x_{n+1}\|^2\leq \|x_n-x^*\|^2
\end{equation*}
with convention that $\frac{1-\lambda_n}{\lambda_n} =0$ if $\lambda_n =0$.
By the definition of $T$, there is a sequence of indices $(i_n)_{n\in\mathbb{N}}\subseteq I$ such that
\begin{equation*}
\forall n\in\mathbb{N},\quad x_{n+1} =(1-\lambda_n)x_n+\lambda_n T_{i_n}(x_n)\quad\text{and}\quad i_n\in \varphi(x_n)\subseteq\varphi(x^*),
\end{equation*}
where the last inclusion is a consequence of Proposition~\ref{prop:prop2.1 Tam17} and the fact that $(x_n)_{n\in\mathbb{N}}\subseteq\inte\mathbb{B}(x^*;r)$.

Let $I^*$ denote the set of admissible indices in the sequence $(i_n)_{n\in\mathbb{N}}$. Then $I^*\subseteq \varphi(x^*)$ which,  together with Proposition~\ref{prop:fixed points}\ref{it:fixed points_StrFix} applied to $x^*\in \StrFix T$, yields
\begin{equation*}
x^*\in \bigcap_{i\in\varphi(x*)}\Fix T_i\subseteq \bigcap_{i\in I^*}\Fix T_i.
\end{equation*}
That is, $\{T_i\}_{i\in I^*}$ is a collection of nonexpansive operators with a common fixed point. By applying Theorem~\ref{thm:KM iterations with admissible control}, $x_n\to\overline{x}\in\cap_{i\in I^*}\Fix T_i$. Since, for all $n\in\mathbb{N}$, $x_n\in \mathbb{B}(x^*;r)$, it also holds that $\overline{x}\in \mathbb{B}(x^*;r)$. Finally, for any $i\in I^*$, there exists a subsequence $i_{k_n}\to i$. Since $x_{k_n}\to\overline{x}$ and $\varphi$ is osc with $i_{k_n}\in\varphi(x_{n_k})$, we deduce that $i\in\varphi(\overline{x})$. By Proposition~\ref{prop:fixed points}\ref{it:fixed points_Fix},  it follows that $\overline{x}\in\Fix T$ as was claimed.
\end{proof}

\begin{corollary}[Local convergence of union averaged nonexpansive iterations]\label{cor:union averaged}
Let $T\colon X\setto X$ be union $\alpha$-averaged nonexpansive operator with $x^*\in\StrFix T$ and let $(\lambda_n)_{n\in\mathbb{N}}$ be a sequence in $(0, 1/\alpha]$ with $\liminf_{n\to\infty} \lambda_n(1/\alpha-\lambda_n)> 0$. Denote $r :=r(x^*;T)\in(0,+\infty]$ and consider $(x_n)_{n\in\mathbb{N}}$ with $x_0\in \inte\mathbb{B}(x^*;r)$ satisfying
\begin{equation*}
\forall n\in\mathbb{N},\quad x_{n+1}\in (1-\lambda_n)x_n+\lambda_n T(x_n).
\end{equation*} 
Then $(x_n)_{n\in\mathbb{N}}$ converges to a point $\overline{x}\in\Fix T\cap \mathbb{B}(x^*;r)$. 
\end{corollary}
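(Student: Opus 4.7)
The plan is to reduce the corollary to Theorem~\ref{thm:union} by rewriting the iteration through the union nonexpansive companion of $T$ supplied by Proposition~\ref{prop:averaged}\ref{it:prop averaged 2}. Specifically, with $T$ fixed by its representation $T(x)=\{T_i(x):i\in\varphi(x)\}$ in \eqref{eq:a particular uan op} (the $T_i$ being $\alpha$-averaged nonexpansive), I would set $R_i:=(1-1/\alpha)\Id+(1/\alpha)T_i$, which is nonexpansive by Proposition~\ref{prop:AN characterizations}, and define $R\colon X\setto X$ by $R(x):=\{R_i(x):i\in\varphi(x)\}$. By construction $R$ is union nonexpansive, uses the same active selector $\varphi$ as $T$, and satisfies $T=(1-\alpha)\Id+\alpha R$. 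In particular, $r(x^*;R)=r(x^*;T)=r$ and $\Fix R=\Fix T$ (since $x\in R(x)$ forces $(1-1/\alpha)x+(1/\alpha)y=x$ for some $y\in T(x)$, i.e.\ $y=x\in T(x)$, and conversely); similarly $\StrFix R=\StrFix T$, so $x^*\in\StrFix R$.

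Next I would rewrite the iteration in terms of $R$. Using $T(x_n)=(1-\alpha)x_n+\alpha R(x_n)$, a direct substitution gives
\begin{equation*}
x_{n+1}\in(1-\lambda_n)x_n+\lambda_n T(x_n)=(1-\alpha\lambda_n)x_n+\alpha\lambda_n R(x_n).
\end{equation*}
Set $\mu_n:=\alpha\lambda_n$. Since $\lambda_n\in(0,1/\alpha]$, we have $\mu_n\in(0,1]$. Moreover,
\begin{equation*}
\mu_n(1-\mu_n)=\alpha\lambda_n(1-\alpha\lambda_n)=\alpha^{2}\lambda_n\bigl(1/\alpha-\lambda_n\bigr),
\end{equation*}
so the hypothesis $\liminf_{n\to\infty}\lambda_n(1/\alpha-\lambda_n)>0$ translates to $\liminf_{n\to\infty}\mu_n(1-\mu_n)>0$, which is exactly what Theorem~\ref{thm:union} requires.

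Applying Theorem~\ref{thm:union} to the union nonexpansive operator $R$, the strong fixed point $x^*$, the starting point $x_0\in\inte\mathbb{B}(x^*;r)$, and the sequence $(\mu_n)_{n\in\mathbb{N}}$, I obtain convergence of $(x_n)_{n\in\mathbb{N}}$ to some $\overline{x}\in\Fix R\cap\mathbb{B}(x^*;r)$. Using $\Fix R=\Fix T$ and $r(x^*;R)=r(x^*;T)=r$, this is the desired conclusion.

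I do not expect any real obstacle: the only point that needs care is the bookkeeping around the representation, namely verifying that the particular $R$ constructed from the $R_i$'s inherits the same active selector $\varphi$, so that its radius of attraction coincides with that of $T$. Once that is observed, the corollary is a one-line rescaling of the iteration followed by an invocation of Theorem~\ref{thm:union}.
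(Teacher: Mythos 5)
Your proposal is correct and follows essentially the same route as the paper: define $R:=(1-1/\alpha)\Id+(1/\alpha)T$ via Proposition~\ref{prop:averaged}, observe that under the fixed representation $r(x^*;R)=r(x^*;T)$, $\Fix R=\Fix T$, $\StrFix R=\StrFix T$, rewrite the iteration with stepsizes $\alpha\lambda_n$, and invoke Theorem~\ref{thm:union}. Your explicit check that $\liminf\mu_n(1-\mu_n)=\alpha^2\liminf\lambda_n(1/\alpha-\lambda_n)>0$ is a small piece of bookkeeping the paper leaves implicit.
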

\begin{proof}
It follows from Proposition~\ref{prop:averaged} that $R :=(1-1/\alpha)\Id+(1/\alpha)T$ is union nonexpansive. Under the fixed representation of $T$ in \eqref{eq:a particular uan op}, we have $r(x^*;R) =r(x^*;T)$. Moreover, $\StrFix R =\StrFix T$, $\Fix R =\Fix T$, and, for all $n\in\mathbb{N}$, $x_{n+1} =(1-\alpha\lambda_n)x_n+\alpha\lambda_n R(x)$. The result now follows from Theorem~\ref{thm:union} applied to $R$ and $(\alpha\lambda_n)_{n\in\mathbb{N}}$. 
\end{proof}

We conclude this section with the following results concerning global convergence.

\begin{corollary}[Global convergence of union averaged nonexpansive iterations]
Let $T\colon X\setto X$ be union $\alpha$-averaged nonexpansive operator and suppose that there exists $x^*\in\StrFix T$ with $\varphi(x^*) =I$. Then 
\begin{equation}\label{eq:global attraction}
\forall x\in X,\, \forall y\in T(x),\quad \|y-x^*\|^2+\frac{1-\alpha}{\alpha}\|x-y\|^2\leq \|x-x^*\|^2.
\end{equation}
Further let $(\lambda_n)_{n\in\mathbb{N}}$ be a sequence in $(0, 1/\alpha]$ with $\liminf_{n\to\infty} \lambda_n(1/\alpha-\lambda_n)> 0$.  For any $x_0\in X$, consider a sequence $(x_n)_{n\in\mathbb{N}}$ satisfying
\begin{equation*}
\forall n\in\mathbb{N},\quad x_{n+1}\in (1-\lambda_n)x_n+\lambda_n T(x_n).
\end{equation*} 
Then $(x_n)_{n\in\mathbb{N}}$ converges to a point $\overline{x}\in\Fix T$.
\end{corollary}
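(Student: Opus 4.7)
The plan is to reduce this corollary to Corollary~\ref{cor:union averaged} by exploiting the hypothesis $\varphi(x^*) = I$ to promote the local convergence statement to a global one.

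First, I would verify the global attraction inequality \eqref{eq:global attraction}. Since $x^* \in \StrFix T$, Proposition~\ref{prop:fixed points}\ref{it:fixed points_StrFix} gives $x^* \in \bigcap_{i \in \varphi(x^*)} \Fix T_i$, and the hypothesis $\varphi(x^*) = I$ strengthens this to $x^* \in \bigcap_{i\in I} \Fix T_i$. For any $x \in X$ and $y \in T(x)$, there exists $i \in \varphi(x) \subseteq I$ with $y = T_i(x)$; applying the defining inequality of $\alpha$-averaged nonexpansiveness from Definition~\ref{def:averaged} to $T_i$ at the pair $(x,x^*)$, and using $T_i(x^*) = x^*$, yields
\begin{equation*}
\|T_i(x) - x^*\|^2 + \frac{1-\alpha}{\alpha}\|x - T_i(x)\|^2 \leq \|x - x^*\|^2,
\end{equation*}
which is precisely \eqref{eq:global attraction}.

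Next, I would show that the radius of attraction at $x^*$ is infinite. By \eqref{eq:radius of attraction}, one must find $\delta > 0$ so that $\varphi(x) \subseteq \varphi(x^*)$ for all $x \in \mathbb{B}(x^*;\delta)$. Under $\varphi(x^*) = I$, the inclusion $\varphi(x) \subseteq I = \varphi(x^*)$ is automatic for every $x \in X$, so $r(x^*;T) = +\infty$ and $\inte \mathbb{B}(x^*;r) = X$; consequently any starting point $x_0 \in X$ is admissible.

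The convergence statement then follows verbatim from Corollary~\ref{cor:union averaged}: under the stipulated assumptions on $(\lambda_n)_{n\in\mathbb{N}}$, the sequence $(x_n)_{n\in\mathbb{N}}$ converges to some $\overline{x} \in \Fix T \cap \mathbb{B}(x^*;r)$, and since $\mathbb{B}(x^*;r) = X$, this simply reads $\overline{x} \in \Fix T$. I do not anticipate a genuine obstacle here; the whole content is the observation that $\varphi(x^*) = I$ collapses the local hypotheses of Proposition~\ref{prop:radius} and Corollary~\ref{cor:union averaged} into global ones, and the step-size condition $\liminf_n \lambda_n(1/\alpha - \lambda_n) > 0$ is identical in both statements.
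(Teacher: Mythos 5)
Your proposal is correct and follows essentially the same route as the paper: observe that $\varphi(x^*)=I$ forces $r(x^*;T)=+\infty$, so the local statement of Corollary~\ref{cor:union averaged} becomes global. The only cosmetic difference is that you derive \eqref{eq:global attraction} directly from Definition~\ref{def:averaged} applied to each $T_i$ with $T_i(x^*)=x^*$, whereas the paper cites Proposition~\ref{prop:radius}; these are the same computation.
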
	
\begin{proof}
Since $\varphi(x^*) =I$, the radius of attraction \eqref{eq:radius of attraction} at $x^*$ is $r(x^*;T) =+\infty$. Equation~\eqref{eq:global attraction} now follows from Proposition~\ref{prop:radius} and convergence of $(x_n)_{n\in\mathbb{N}}$ from Corollary~\ref{cor:union averaged}.
\end{proof}

\subsection{Convergence of iterations based on compositions}
In this subsection, we look at the finer behaviour of the iterates of the compositions of union nonexpansive operators with a common fixed point. As in the previous section, it is convenient to fix representations of said operators. To this end, for each operator $T_j$ in the the finite collection of union nonexpansive operators $\{T_j\}_{j\in J}$, we fix the representation
\begin{equation*}
T_j(x) =\{T_{j,i}(x):i\in\varphi_j(x)\},
\end{equation*}
where $I_j$ is a finite index set, $\{T_{j,i}\}_{i\in {I_j}}$ is a collection of nonexpansive operators on $X$, and $\varphi_j\colon X\setto I_j$ is the osc, nonempty-valued active selector.

\begin{proposition}[Common fixed points]\label{prop:common fixed points}
Let $J :=\{1,\dots,m\}$ and let $\{T_j\}_{j\in J}$ be a collection of union averaged nonexpansive with the exception of at most one operator which is union nonexpansive. Then for each $x^*\in \cap_{i\in J}\StrFix T_j$, there exists a $\delta >0$ such that
\begin{align*}
\StrFix(T_m\circ\dots\circ T_1)\cap \mathbb{B}(x^*;\delta) & =\bigcap_{j\in J} \StrFix T_j\cap \mathbb{B}(x^*;\delta), \\
\Fix(T_m\circ\dots\circ T_1)\cap \mathbb{B}(x^*;\delta) & =\bigcap_{j\in J} \Fix T_j\cap \mathbb{B}(x^*;\delta). 
\end{align*}
\end{proposition}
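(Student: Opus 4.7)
The plan is to pick $\delta>0$ strictly smaller than $r(x^*;T_j)$ for every $j\in J$; such a $\delta$ exists because, by \cref{prop:prop2.1 Tam17} applied to each active selector $\varphi_j$, each radius $r(x^*;T_j)$ is strictly positive. The two inclusions $\supseteq$ in the claimed identities hold globally (no localization needed): if $x\in\bigcap_{j\in J}\Fix T_j$ then the constant chain $y_0=\dots=y_m=x$ realizes $x\in(T_m\circ\dots\circ T_1)(x)$, and if $x\in\bigcap_{j\in J}\StrFix T_j$ then $T_1(x)=\{x\}$ propagates through the composition to give $(T_m\circ\dots\circ T_1)(x)=\{x\}$.

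For the fixed-point inclusion $\subseteq$, take $x\in\Fix(T_m\circ\dots\circ T_1)\cap\mathbb{B}(x^*;\delta)$ and unfold to a chain $y_0:=x$ with $y_j\in T_j(y_{j-1})$ for $j=1,\dots,m$ and $y_m=x$. The bound $\|y_{j-1}-x^*\|\leq\|y_0-x^*\|\leq\delta<r(x^*;T_j)$ propagates inductively, so each $y_{j-1}$ lies in $\inte\mathbb{B}(x^*;r(x^*;T_j))$ and \cref{prop:radius} applies. Whenever $T_j$ is union $\alpha_j$-averaged nonexpansive, this yields
\begin{equation*}
\|y_j-x^*\|^2+\frac{1-\alpha_j}{\alpha_j}\|y_{j-1}-y_j\|^2\leq\|y_{j-1}-x^*\|^2,
\end{equation*}
while for the at most one merely union nonexpansive factor, say $T_k$, it yields $\|y_k-x^*\|\leq\|y_{k-1}-x^*\|$. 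Telescoping over $j=1,\dots,m$ and using $\|y_m-x^*\|=\|y_0-x^*\|$ collapses everything to $\sum_{j\neq k}\frac{1-\alpha_j}{\alpha_j}\|y_{j-1}-y_j\|^2=0$, so $y_{j-1}=y_j$ for every averaged index. Consequently $y_0=\dots=y_{k-1}$ and $y_k=\dots=y_m$, and combining with $y_0=y_m$ bridges the single remaining step to force every $y_j$ to equal $x$. Thus $x\in T_j(x)$ for each $j$, i.e., $x\in\bigcap_{j\in J}\Fix T_j$.

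For the strong-fixed-point inclusion $\subseteq$, let $x\in\StrFix(T_m\circ\dots\circ T_1)\cap\mathbb{B}(x^*;\delta)$; in particular $x$ is a fixed point of the composition, so $x\in\bigcap_{j\in J}\Fix T_j$ by the previous step. I then argue by induction on $j$ that $T_j(x)=\{x\}$: for an arbitrary $y_j\in T_j(x)$, set $y_1=\dots=y_{j-1}=x$ (valid since $x\in\Fix T_i$ for $i<j$) and extend to $y_{j+1}\in T_{j+1}(y_j),\dots,y_m\in T_m(y_{m-1})$ using that each $T_i$ has nonempty values. The strong-fixed-point condition forces $y_m=x$, so the telescoping argument of the previous paragraph applied to this chain forces $y_j=x$; hence $T_j(x)\subseteq\{x\}$, and combining with $x\in\Fix T_j$ yields $T_j(x)=\{x\}$. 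The main subtlety of the argument is the telescoping: each averaged factor forces the chain to be constant across its step, but a non-averaged step provides no such forcing, and the scalar identity $y_0=y_m$ suffices to bridge exactly one such gap, which is precisely why the hypothesis allows at most one merely union nonexpansive operator.
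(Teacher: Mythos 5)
Your proof is correct. It differs from the paper's in organization: the paper proceeds by induction on $m$, splitting the composition into two blocks $S_2\circ S_1$ (one of which is union averaged nonexpansive, by \cref{prop:closedness}\ref{it:UAN composition}), applying \cref{prop:radius} to the two composite halves to force $S_1(x)=\{x\}$, and then invoking the induction hypothesis on each half; you instead unfold the composition into a full chain $y_0,\dots,y_m$ and telescope the quasi-Fej\'er inequality from \cref{prop:radius} across all $m$ factors at once, using $y_0=y_m$ to absorb the single non-averaged step. The underlying estimate is identical, but your flat telescoping avoids the induction, avoids having to form (and compute radii of attraction for) the composite operators $S_1,S_2$, and makes transparent exactly why at most one merely nonexpansive factor is tolerable --- the scalar identity $\|y_m-x^*\|=\|y_0-x^*\|$ can bridge only one gap in the chain of equalities. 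Two small points worth making explicit if you write this up: (i) the inequality $\|y_j-x^*\|^2+\tfrac{1-\alpha_j}{\alpha_j}\|y_{j-1}-y_j\|^2\le\|y_{j-1}-x^*\|^2$ for $y_j\in T_j(y_{j-1})$ is obtained from \cref{prop:radius} by writing $T_j=(1-\alpha_j)\Id+\alpha_j R_j$ with $R_j$ union nonexpansive (\cref{prop:averaged}) and taking $\lambda=\alpha_j$ in \eqref{eq:attraction averaged}, since \eqref{eq:attraction averaged} as stated concerns relaxations of a union nonexpansive operator rather than a union averaged one (the paper glosses over the same point); and (ii) the forward propagation $\|y_j-x^*\|\le\delta$ needed to keep each $y_{j-1}$ inside $\inte\mathbb{B}(x^*;r(x^*;T_j))$ comes from \eqref{eq:attraction nonexpansive}, which you correctly note but should state as part of the induction along the chain.
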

\begin{proof}
We first observe that $\cap_{j\in J} \StrFix T_j\subseteq \StrFix(T_m\circ\dots\circ T_1)$ and $\cap_{j\in J} \Fix T_j\subseteq \Fix(T_m\circ\dots\circ T_1)$. Thus to prove the claimed result, we need only establish that for each $m$ there exists a $\delta >0$ such that  
\begin{subequations}
\label{eq:common Fix}
\begin{align}
\StrFix(T_m\circ\dots\circ T_1)\cap \mathbb{B}(x^*;\delta) & \subseteq \bigcap_{j\in J} \StrFix T_j, \\
\Fix(T_m\circ\dots\circ T_1)\cap \mathbb{B}(x^*;\delta) & \subseteq \bigcap_{j\in J} \Fix T_j.
\end{align}
\end{subequations}
To do so, we use induction on $m$. First, it is clear that \eqref{eq:common Fix} holds for $m=1$, so there is nothing to do. Suppose instead that \eqref{eq:common Fix} $m\geq 2$ and the result holds for $1,2,\dots,m-1$. Combining the assumptions on $\{T_i\}_{i\in I}$ with Proposition~\ref{prop:closedness}\ref{it:UAN composition}, we deduce the existence of an index $k\in J\setminus\{m\}$ such that both $S_2 :=T_m\circ\dots\circ T_{k+1}$ and $S_1 :=T_k\circ\dots\circ T_1$ are union nonexpansive, and at least one of $S_1$ or $S_2$ is union averaged nonexpansive. Then 
\begin{equation*}
x^*\in\bigcap_{i\in J}\StrFix T_j = \left(\bigcap_{i=1}^k \StrFix T_j\right)\cap \left(\bigcap_{j =k+1}^m \StrFix T_j\right)\subseteq \StrFix S_1\cap \StrFix S_2.
\end{equation*}
By the induction hypothesis, there exists a $\delta >0$ such that
\begin{align*}
\StrFix(S_1)\cap \mathbb{B}(x^*;\delta) &\subseteq \bigcap_{j =1}^k \StrFix T_j,\  &\StrFix(S_2)\cap \mathbb{B}(x^*;\delta) &\subseteq \bigcap_{j =k+1}^m \StrFix T_j, \\
\Fix(S_1)\cap \mathbb{B}(x^*;\delta) &\subseteq \bigcap_{j =1}^k \Fix T_j,\  &\Fix(S_2)\cap \mathbb{B}(x^*;\delta) &\subseteq \bigcap_{j =k+1}^m \Fix T_j.
\end{align*}
By shrinking $\delta$ if necessary, we may and do assume that $0< \delta <\min\{r(x^*;S_1), r(x^*;S_2)\}$. Now, let $x\in \StrFix (S_2\circ S_1)\cap \mathbb{B}(x^*;\delta)$. Then 
$\{x\} =S_2(y)$ for all $y\in S_1(x)$. Since $x^*\in \StrFix S_1\cap\StrFix S_2$, and either $S_1$ or $S_2$ is union $\alpha$-averaged nonexpansive, Proposition~\ref{prop:radius} applied to $S_1$ and then $S_2$ yields
\begin{equation*}
\forall y\in S_1(x),\quad \|x-x^*\|^2 \geq \|x-x^*\|^2 +\frac{1-\alpha}{\alpha}\|x-y\|^2.
\end{equation*}
Hence we have that $\|x-y\|=0$ for all $y\in S_1(x)$ or, equivalently, that $S_1(x)=\{x\}$. It then follows that $x\in \StrFix S_1\cap\StrFix S_2\cap \mathbb{B}(x^*;\delta)$ which proves the equality for the strong fixed point set. The proof for the fixed point set is performed similarly.
\end{proof}

In order to prove the main result of this subsection, we require the following technical lemma.
\begin{lemma}\label{lem:convergence of unions}
Let $\{T_j\}_{j\in J}$ be a finite collection of union nonexpansive operators on $X$ with $x^*\in \cap_{j\in J} \StrFix T_j$ and let $(\lambda_n)_{n\in\mathbb{N}}$ be a sequence in $]0, 1]$ with $\liminf_{n\to\infty} \lambda_n(1-\lambda_n)> 0$. Denote
\begin{equation*}
r :=\min_{j\in J}r(x^*;T_j)\in(0,+\infty].
\end{equation*}
and consider a sequence $(x_n)_{n\in\mathbb{N}}$ with $x_0\in\inte\mathbb{B}(x^*;r)$ satisfying
\begin{equation}
\label{eq:x+}
\forall n\in\mathbb{N},\quad x_{n+1}\in (1-\lambda_n)x_n+\lambda_n T_{j_n}(x_n),
\end{equation}
where $(j_n)_{n\in\mathbb{N}}$ is admissible in $J$. 
Then the sequence $(x_n)_{n\in\mathbb{N}}$ converges and its limit is contained in $\left(\cup_{j\in J}\Fix T_j\right)\cap \mathbb{B}(x^*;r)$.
\end{lemma}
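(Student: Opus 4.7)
The plan is to reduce the set-valued iteration \eqref{eq:x+} to a single-valued Krasnosel'ski\u{\i}--Mann iteration over a finite \emph{compound} index set and then invoke \Cref{thm:KM iterations with admissible control} directly.

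First I would establish by induction on $n$ that $x_n \in \inte\mathbb{B}(x^*;r)$, together with the Fej\'er-type inequality
\begin{equation*}
\|x_{n+1} - x^*\|^2 + \frac{1-\lambda_n}{\lambda_n}\|x_n - x_{n+1}\|^2 \leq \|x_n - x^*\|^2,
\end{equation*}
which follows directly from \eqref{eq:attraction averaged} in \Cref{prop:radius} applied to $T_{j_n}$, using $x^*\in\StrFix T_{j_n}$ and $r\leq r(x^*;T_{j_n})$. Since this forces $\|x_{n+1}-x^*\|<r$, the induction carries through, $(\|x_n-x^*\|)_{n\in\mathbb{N}}$ is nonincreasing and convergent, and telescoping combined with the hypothesis $\liminf_{n\to\infty}\lambda_n(1-\lambda_n)>0$ (which keeps $(1-\lambda_n)/\lambda_n$ bounded below by a positive constant for large $n$) gives $\sum_{n}\|x_n-x_{n+1}\|^2<\infty$, and in particular $x_n-x_{n+1}\to 0$.

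Next, for each $n$ I would pick $i_n\in\varphi_{j_n}(x_n)$ realising the inclusion \eqref{eq:x+}, that is, with $x_{n+1}=(1-\lambda_n)x_n+\lambda_n T_{j_n,i_n}(x_n)$. Since $x_n\in\inte\mathbb{B}(x^*;r)\subseteq\inte\mathbb{B}(x^*;r(x^*;T_{j_n}))$, \Cref{prop:prop2.1 Tam17} gives $\varphi_{j_n}(x_n)\subseteq\varphi_{j_n}(x^*)$, so the compound sequence $((j_n,i_n))_{n\in\mathbb{N}}$ takes values in the finite set $F:=\{(j,i): j\in J,\ i\in\varphi_j(x^*)\}$. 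Let $F^*\subseteq F$ consist of those pairs that appear infinitely often in $((j_n,i_n))$; after discarding finitely many initial terms, $(j_n,i_n)\in F^*$ for every $n$, and then $((j_n,i_n))$ is admissible in $F^*$ by construction. The crucial observation is that $\{T_{j,i}\}_{(j,i)\in F^*}$ is a finite collection of single-valued nonexpansive operators sharing $x^*$ as a common fixed point, because $x^*\in\StrFix T_j$ forces $T_{j,i}(x^*)=x^*$ for every $i\in\varphi_j(x^*)$. Hence \Cref{thm:KM iterations with admissible control} applies to the rewritten iteration and yields convergence $x_n\to\bar x$ with $\bar x\in\bigcap_{(j,i)\in F^*}\Fix T_{j,i}\cap\mathbb{B}(x^*;r)$.

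To place $\bar x$ in $\bigcup_{j\in J}\Fix T_j$, pick any $(j,i)\in F^*$ together with a subsequence $(n_k)$ on which $(j_{n_k},i_{n_k})=(j,i)$; since $i\in\varphi_j(x_{n_k})$ and $x_{n_k}\to\bar x$, outer semicontinuity of $\varphi_j$ yields $i\in\varphi_j(\bar x)$, and combined with $T_{j,i}(\bar x)=\bar x$ this gives $\bar x\in T_j(\bar x)$, i.e.\ $\bar x\in\Fix T_j$. The main obstacle is conceptual rather than technical: one has to recognise that although \eqref{eq:x+} is a priori set-valued with no preferred selection, the \Cref{prop:prop2.1 Tam17}-type confinement of the active selectors forces the compound iteration into the single-valued admissible-control framework of \Cref{thm:KM iterations with admissible control}; once this reduction is in place, \Cref{thm:KM iterations with admissible control} does the rest and the identification of the limit via osc of $\varphi_j$ is routine.
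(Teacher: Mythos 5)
Your argument is correct and takes essentially the same route as the paper: the paper merely packages your compound-index reduction by forming the single union operator $T:=\cup_{j\in J}T_j$ (union nonexpansive with active selector $\varphi(x)=\{(j,i):j\in J,\ i\in\varphi_j(x)\}$ by Proposition~\ref{prop:closedness}\ref{it:UAN union}, with $r(x^*;T)=\min_{j\in J}r(x^*;T_j)$, $\StrFix T=\cap_{j\in J}\StrFix T_j$ and $\Fix T=\cup_{j\in J}\Fix T_j$), observing that any sequence satisfying \eqref{eq:x+} is also a $T$-iteration, and then citing Theorem~\ref{thm:union}, whose proof consists of exactly the confinement, admissible-control, Theorem~\ref{thm:KM iterations with admissible control}, and osc-identification steps you carry out by hand. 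Nothing is missing.
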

\begin{proof}
Set $T :=\cup_{j\in J}T_j$. By Proposition~\ref{prop:closedness}\ref{it:UAN union}, $T$ is a union nonexpansive operator with active selector
\begin{equation*}
\varphi(x) :=\{(j,i): j\in J, i\in \varphi_j(x)\}.
\end{equation*}
For any $x\in X$, we have $\varphi(x)\subseteq\varphi(x^*)$ if and only if $\varphi_j(x)\subseteq\varphi_j(x^*)$ for all $j\in J$. It thus follows that
$$  r(x^*; T) =\min_{j\in J}r(x^*;T_j)\in (0,+\infty], $$
where positivity of the right-hand-side follows from Proposition~\ref{prop:prop2.1 Tam17} and the finiteness of $J$. A direct calculation then shows that
\begin{equation*}
\StrFix T =\bigcap_{j\in J}\StrFix T_j
\quad\text{and}\quad \Fix T =\bigcup_{j\in J}\Fix T_j.
\end{equation*}
We also note that every sequence generated by \eqref{eq:x+} is actually a sequence with the same starting point generated by 
\begin{equation*}
\forall n\in\mathbb{N},\quad x_{n+1}\in (1-\lambda_n)x_n+\lambda_n T(x_n).
\end{equation*}
The result thus follows from Theorem~\ref{thm:union} applied to $T$.
\end{proof}

In the following corollary, $n\bmod{m}\in\{0,\dots,m-1\}$ denotes the remainder when $n$ is divided by $m$.
\begin{corollary}[Local convergence of compositions]\label{cor:cyclic algo}
Let $J :=\{1,\dots,m\}$ and let $\{T_j\}_{j\in J}$ be a collection of union averaged nonexpansive operators on $X$ with $x^*\in \StrFix(T_m\circ \dots \circ T_1)$. Denote $r\in \left(0,r(x^*; T_m\circ \dots \circ T_1)\right]$ and consider a sequence $(x_n)_{n\in\mathbb{N}}$ with $x_0\in \inte\mathbb{B}(x^*;r)$ satisfying
\begin{equation*}
\forall n\in\mathbb{N},\quad x_{n+1}\in T_{i_n}(x_n), \text{~~where~~}i_n =\left(n\bmod{m}\right) +1.
\end{equation*}
Then $(x_{mn})_{n\in\mathbb{N}}$ converges to a point $\overline{x}\in \Fix(T_m\circ \dots \circ T_1)\cap \mathbb{B}(x^*;r)$. Furthermore, if $x^*\in \cap_{j\in J} \StrFix T_j$ and $r$ is sufficiently small, then the entire sequence $(x_{n})_{n\in\mathbb{N}}$ converges to $\overline{x}$ and $\overline{x}\in\cap_{j\in J}\Fix T_j\cap \mathbb{B}(x^*;r)$.
\end{corollary}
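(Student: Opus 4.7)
The plan is to decompose the statement into two Krasnosel'ski\u{\i}--Mann arguments, each of which I can discharge via Corollary~\ref{cor:union averaged}: the first applied to the full composition, the second to the pointwise union of the factors.

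For the first assertion, set $S :=T_m\circ\dots\circ T_1$. By Proposition~\ref{prop:closedness}\ref{it:UAN composition}, $S$ is union $\alpha$-averaged nonexpansive for some $\alpha\in(0,1)$. Unfolding the cyclic control $i_n =(n\bmod m)+1$ together with the definition of set-valued composition, I would verify by induction that the subsequence $(x_{mn})_{n\in\mathbb{N}}$ satisfies $x_{m(n+1)}\in S(x_{mn})$. Since $r\leq r(x^*;S)$, $x_0\in\inte\mathbb{B}(x^*;r)$ and $x^*\in\StrFix S$, Corollary~\ref{cor:union averaged} applied to $S$ with the constant step-sizes $\lambda_n\equiv 1$---which is admissible because $\alpha<1$ gives $\liminf_{n\to\infty}\lambda_n(1/\alpha-\lambda_n) =1/\alpha-1>0$---then yields convergence of $(x_{mn})_{n\in\mathbb{N}}$ to some $\overline{x}\in\Fix S\cap\mathbb{B}(x^*;r)$.

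For the second assertion, assume additionally $x^*\in\bigcap_{j\in J}\StrFix T_j$ and introduce $T :=\cup_{j\in J}T_j$, which is union averaged nonexpansive by Proposition~\ref{prop:closedness}\ref{it:UAN union}. The same computations that appear in the proof of Lemma~\ref{lem:convergence of unions} give $\StrFix T =\cap_{j\in J}\StrFix T_j\ni x^*$ together with the radius identity $r(x^*;T) =\min_{j\in J}r(x^*;T_j)\in(0,+\infty]$. Let $\delta>0$ be the constant supplied by Proposition~\ref{prop:common fixed points} at $x^*$ and shrink $r$ if necessary so that $r\leq\min\{r(x^*;S),\,r(x^*;T),\,\delta\}$. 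Since $x_{n+1}\in T_{i_n}(x_n)\subseteq T(x_n)$ for every $n$, Corollary~\ref{cor:union averaged} applied to $T$ with $\lambda_n\equiv 1$ delivers convergence of the \emph{entire} sequence to a limit in $\Fix T\cap\mathbb{B}(x^*;r)$, and by uniqueness of subsequential limits this limit must coincide with the $\overline{x}\in\Fix S$ produced in the first part.

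The closing move is to upgrade $\overline{x}\in\Fix S$ to $\overline{x}\in\bigcap_{j\in J}\Fix T_j$, and this is the one delicate step: a priori Corollary~\ref{cor:union averaged} applied to $T$ only delivers $\overline{x}\in\Fix T =\cup_{j\in J}\Fix T_j$, so a membership in the union is \emph{not} enough. The resolution is precisely to route through the composition $S$ and invoke Proposition~\ref{prop:common fixed points}, which is the localization statement that, for $r\leq\delta$, converts a fixed point of $S$ lying in $\mathbb{B}(x^*;\delta)$ into a common fixed point of all the $T_j$. The remaining bookkeeping---the step-size admissibility, the identities $\Fix T =\cup_{j}\Fix T_j$ and $\StrFix T =\cap_{j}\StrFix T_j$, and the radius equality $r(x^*;T) =\min_j r(x^*;T_j)$ under the chosen representations---is routine from Definition~\ref{def:union averaged} and the active-selector formula in Proposition~\ref{prop:closedness}\ref{it:UAN union}.
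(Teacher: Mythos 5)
Your proposal is correct and follows essentially the same route as the paper: apply Corollary~\ref{cor:union averaged} to the composition $T_m\circ\dots\circ T_1$ for the subsequence $(x_{mn})$, obtain convergence of the full sequence via the union operator $\cup_{j\in J}T_j$ (the content of Lemma~\ref{lem:convergence of unions}), and finish with the localization in Proposition~\ref{prop:common fixed points}. Your one small deviation is actually an improvement in rigour: by invoking Corollary~\ref{cor:union averaged} directly on the union $\cup_{j\in J}T_j$ (which is union $\alpha$-averaged with $\alpha<1$, so $\lambda_n\equiv 1$ satisfies $\liminf_n\lambda_n(1/\alpha-\lambda_n)>0$), you sidestep the fact that Lemma~\ref{lem:convergence of unions} as stated requires $\liminf_n\lambda_n(1-\lambda_n)>0$, which fails for $\lambda_n\equiv 1$.
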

\begin{proof}
Since $T :=T_m\circ \dots \circ T_1$ is union averaged nonexpansive by Proposition~\ref{prop:closedness}\ref{it:UAN composition}, the claim regarding convergence of $(x_{mn})_{n\in\mathbb{N}}$ to a point $\overline{x}\in \Fix(T_m\circ \dots \circ T_1)$ follows by applying Corollary~\ref{cor:union averaged} to $T$ with all $\lambda_n =1$. To prove the second claim, first note that the entire sequence $(x_n)_{n\in\mathbb{N}}$ is convergent by Lemma~\ref{lem:convergence of unions}. Since one of its subsequence, $(x_{mn})_{n\in\mathbb{N}}$, converges to $\overline{x}$, it must be that $x_n\to\overline{x}$. Now, Proposition~\ref{prop:common fixed points} completes the proof.
\end{proof}

\section{Min-convex functions}\label{s:min-convex}
In this section we study the following class of functions whose \emph{proximity operators} will be shown to belong to the class of union averaged nonexpansive operators. 
\begin{definition}[Min-convexity]
We say a function $f\colon X\to (-\infty,+\infty]$ is \emph{min-convex} if it can be expressed in the form
\begin{equation*}
\forall x\in X,\quad f(x) :=\min_{i\in I}f_i(x),
\end{equation*}
where $I$ is a finite index set and the functions $f_i\colon X\to (-\infty,+\infty]$ are proper, lsc and convex.
\end{definition}
In general, a min-convex function need not be convex. In fact, sufficient conditions for a min-convex function to be convex were studied in \cite{BauLucPha16} (see also \cite[Proposition~5]{EbeRosSan17}). As a concrete example of a min-convex function, we revisit Example~\ref{ex:sparsity}.
\begin{example}[Sparsity projectors (revisited)]
Let $X =\mathbb{R}^n$ and $s\in\{0,1,\dots,n-1\}$. Recall that the sparsity constraint from Example~\ref{ex:sparsity} which can be expresses as the union of subspaces. \emph{i.e.,}
\begin{equation*}
C :=\{x\in\mathbb{R}^n:\|x\|_0\leq s\} =\bigcup_{I\in\mathcal{I}}C_I\text{ where }C_I :=\{x\in\mathbb{R}^n:x_i\neq 0\text{ only if }i\in I\}
\end{equation*}
Due to this representation, we see that the indicator function to $C$, $\iota_C$, can be expressed as 
$\iota_C =\min_{I\in\mathcal{I}}\iota_{C_I}$. As the indicator function to a (closed) subspace, is a proper lsc convex function, we see that $\iota_C$ is min-convex.
\end{example}

In the subsequent section, we shall study \emph{proximal algorithms} for min-convex functions. These algorithms are based upon the following two objects.
\begin{definition}[Moreau envelopes and proximity operators]
Let $f\colon X\to (-\infty,+\infty]$ be a proper function and let $\gamma>0$ be a positive parameter. The \emph{Moreau envelope} of $f$ denoted $^\gamma\! f\colon X\to (-\infty,+\infty]$ is the function
\begin{equation*}
^\gamma\! f(x) :=\inf_{y\in X}\left(f(y)+\frac{1}{2\gamma}\|x-y\|^2\right)
\end{equation*}
and the \emph{proximity operator} of $f$ denoted $\prox_{\gamma f}\colon X\setto X$ is given by
\begin{equation*}
\prox_{\gamma f}(x) =\left\{y\in X:f(y)+\frac{1}{2\gamma}\|x-y\|^2 ={}^\gamma\! f(x)\right\}.
\end{equation*}
\end{definition}
It is well known that when $f$ is proper, lsc and convex, its proximity operator is single-valued and firmly nonexpansive (\emph{i.e.,} $1/2$-averaged nonexpansive) (see, for instance, \cite[Proposition~12.28]{BauCom17}).

Recall that the \emph{proximal subdifferential} of $f\colon X\to (-\infty,+\infty]$ at $x\in X$ is given by
\begin{equation*}
\partial_p f(x) :=\left\{ x^*\in X: \exists\gamma, \delta> 0,\ \forall y\in \mathbb{B}(x;\delta),\ \langle x^*, y-x \rangle\leq f(y)-f(x)+\frac{1}{2\gamma}\|y-x\|^2 \right\}
\end{equation*}
and that $0\in \partial_p f(x)$ whenever $x$ is a local minimum of $f$; see, e.g., \cite[Equations~(0.1) and (1.4)]{MorNam07}.

In the following two propositions, we investigate various properties under assumptions which are satisfied by min-convex functions.

\begin{proposition}[Properties of proper functions]
\label{prop:basic}
Let $f\colon X\to (-\infty,+\infty]$ be proper and let $\gamma>0$. The following assertions hold.
\begin{enumerate}[label =(\alph*)]
\item\label{it:dom} $\dom {}^\gamma\! f =X$.
\item\label{it:inequalities} $\forall x\in X$,\ $\inf f(X) =\inf {}^\gamma\! f(X)\leq {}^\gamma\! f(x)\leq f(x)$. 
\item\label{it:necessary} Let $x, p\in X$. Then $p\in \prox_{\gamma f}(x)$ if and only if
\begin{equation*}
\forall y\in X,\quad \left\langle \frac{1}{\gamma}(x-p), y-p \right\rangle\leq f(y)-f(p)+\frac{1}{2\gamma}\|y-p\|^2.
\end{equation*}
In particular, if $p\in \prox_{\gamma f}(x)$, then $\frac{1}{\gamma}(x-p)\in \partial_p f(p)$.
\item\label{it:Fix} $\Fix\prox_{\gamma f} =\{x\in X: {}^\gamma\! f(x) =f(x)\}$.
\item\label{it:inclusions} The following inclusions hold.
\begin{equation*}
\argmin f\subseteq \argmin {}^\gamma\! f\cap \dom\prox_{\gamma f}\subseteq \StrFix\prox_{\gamma f}\subseteq \Fix\prox_{\gamma f}\subseteq \{x\in X: 0\in \partial_p f(x)\}.
\end{equation*}
Moreover, when $f$ is convex, all inclusions are satisfied with equality.
\end{enumerate}
\end{proposition}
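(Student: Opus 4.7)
My plan is to establish the five parts in order, since each one builds on the previous. Parts (a), (b), and (d) I expect to dispense with via short unpacking of the definitions: for (a), properness of $f$ furnishes some $y_0\in\dom f$ so that $^\gamma\! f(x)\leq f(y_0)+\tfrac{1}{2\gamma}\|x-y_0\|^2<+\infty$ for every $x$; for (b), the right-hand inequality is the $y=x$ case of the defining infimum, while the equality $\inf f(X)={}\inf{}^\gamma\! f(X)$ follows by sandwiching using nonnegativity of the quadratic penalty ($^\gamma\!f(x)\geq\inf_y f(y)$); for (d), $x\in\Fix\prox_{\gamma f}$ iff the value $f(x)$ achieved at $y=x$ coincides with $^\gamma\! f(x)$.

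For (c), the plan is to rearrange the variational inequality $f(p)+\tfrac{1}{2\gamma}\|x-p\|^2\leq f(y)+\tfrac{1}{2\gamma}\|x-y\|^2$ using the polarization identity
\begin{equation*}
\|x-p\|^2-\|x-y\|^2=2\langle x-p,y-p\rangle-\|y-p\|^2,
\end{equation*}
which after dividing by $\gamma$ produces exactly the claimed inequality. The subdifferential inclusion $\tfrac{1}{\gamma}(x-p)\in\partial_p f(p)$ then reads off the definition of $\partial_p f$ with $x^*:=\tfrac{1}{\gamma}(x-p)$, the same $\gamma$, and any $\delta>0$.

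The substance lies in (e). I would verify each link of the chain in turn: $\argmin f\subseteq\argmin{}^\gamma\!f\cap\dom\prox_{\gamma f}$ is immediate from (b), since a minimizer $x$ of $f$ automatically satisfies $x\in\prox_{\gamma f}(x)$ and $^\gamma\! f(x)=f(x)=\inf f=\inf{}^\gamma\!f$; the next step follows because for $x\in\argmin{}^\gamma\! f$ and any $p\in\prox_{\gamma f}(x)$ the computation $f(p)+\tfrac{1}{2\gamma}\|x-p\|^2={}^\gamma\! f(x)=\inf f\leq f(p)$ forces $p=x$, so $\prox_{\gamma f}(x)=\{x\}$; the third is by definition; and the fourth uses (c) with $p=x$ to extract $0=\tfrac{1}{\gamma}(x-x)\in\partial_p f(x)$.

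The main obstacle is the convex-case equality, which I plan to reduce to showing that $0\in\partial_p f(x)$ forces $x\in\argmin f$ when $f$ is convex; the rest of the equalities will then propagate backwards through the chain via the inclusions already established. The key trick is the standard convex combination argument: for arbitrary $y\in X$ and $t\in(0,1)$ chosen small enough that $y_t:=(1-t)x+ty$ lies in the ball $\mathbb{B}(x;\delta)$ from the definition of $\partial_p f(x)$, convexity yields $f(y_t)\leq(1-t)f(x)+tf(y)$ while the proximal estimate gives $f(y_t)\geq f(x)-\tfrac{t^2}{2\gamma}\|y-x\|^2$. Chaining the two, dividing by $t>0$, and letting $t\to 0^+$ produces $f(y)\geq f(x)$, which proves $x\in\argmin f$ and closes the loop.
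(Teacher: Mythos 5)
Your proof is correct and follows the same decomposition and the same substantive arguments as the paper: the polarization identity for part (c), the forcing argument $f(p)+\tfrac{1}{2\gamma}\|x-p\|^2=\inf{}^\gamma\! f\leq f(p)\Rightarrow p=x$ for the second link of the chain in (e), and part (c) with $p=x$ for the last link. The only real difference is that where the paper outsources three facts to the literature --- parts (a) and (b) to \cite[Proposition~12.9]{BauCom17}, and the convex-case equality to the identities $\partial_p f=\partial f$ \cite[Proposition~7.26]{Cla13} and $\{x:0\in\partial f(x)\}=\argmin f$ \cite[Theorem~16.3]{BauCom17} --- you supply short direct arguments; in particular, your convex-combination argument showing that $0\in\partial_p f(x)$ forces $x\in\argmin f$ for convex $f$ (take $y_t=(1-t)x+ty$, compare the convexity upper bound with the proximal lower bound, divide by $t$ and let $t\to0^+$) is a clean, self-contained substitute for the two citations, at the cost of a slightly longer write-up. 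Both routes are sound; yours buys self-containedness, the paper's buys brevity.
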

\begin{proof}
\ref{it:dom} \& \ref{it:inequalities}: See, for instance, \cite[Proposition~12.9(i)--(iii)]{BauCom17}.

\ref{it:necessary}: Using definition of the $\prox_{\gamma f}$, we deduce that
\begin{subequations}
\begin{align*}
p\in \prox_{\gamma f}(x) &\iff\ \forall y\in X,\quad f(p)+\frac{1}{2\gamma}\|x-p\|^2\leq f(y)+\frac{1}{2\gamma}\|x-y\|^2 \\
&\iff\ \forall y\in X,\quad \left\langle \frac{1}{\gamma}(x-p), y-p \right\rangle\leq f(y)-f(p)+\frac{1}{2\gamma}\|y-p\|^2 \\
&\,\implies\ \frac{1}{\gamma}(x-p)\in \partial_p f(p).
\end{align*}
\end{subequations}

\ref{it:Fix}: $x\in \Fix\prox_{\gamma f} \iff x\in \prox_{\gamma f}(x)\iff {}^\gamma\! f(x) =f(x)+\frac{1}{2\gamma}\|x-x\|^2\iff {}^\gamma\! f(x) =f(x)$.

\ref{it:inclusions}: To show the first inclusion, let $x\in \argmin f$. By applying \ref{it:inequalities}, we deduce that $f(x) =\inf f(X) =\inf {}^\gamma\! f(X)\leq {}^\gamma\! f(x)\leq f(x)$, and so ${}^\gamma\! f(x) =\inf {}^\gamma\! f(X) =f(x) =f(x) +\frac{1}{2\gamma}\|x-x\|^2$, which yields $x\in \argmin {}^\gamma\! f\cap \dom\prox_{\gamma f}$. We therefore obtain that $\argmin f\subseteq \argmin {}^\gamma\! f\cap \dom\prox_{\gamma f}$.

To prove the second inclusion, let $x\in \argmin {}^\gamma\! f\cap \dom\prox_{\gamma f}$. Then there exists $p\in \prox_{\gamma f}(x)$ and using \ref{it:inequalities} we deduce that
\begin{equation*}
\inf {}^\gamma\! f(X)\leq f(p)\leq {}^\gamma\! f(x) =\min {}^\gamma\! f(X),
\end{equation*} 
which implies that $f(p) ={}^\gamma\! f(x) =f(p)+\frac{1}{2\gamma}\|x-p\|^2$,  hence we conclude that $p =x$. In other words, $\prox_{\gamma f}(x) =\{x\}$, or equivalently, $x\in \StrFix\prox_{\gamma f}$ and thus  $\argmin {}^\gamma\! f\cap \dom\prox_{\gamma f}\subseteq \StrFix\prox_{\gamma f}$.

The inclusion $\StrFix\prox_{\gamma f}\subseteq \Fix\prox_{\gamma f}$ follows immediately from the definition. To prove the final inclusion, let $x\in \Fix\prox_{\gamma f}$. Then $x\in \prox_{\gamma f}(x)$ and, by \ref{it:necessary}, $0\in \partial_p f(x)$ as was claimed. In the case in which $f$ is convex, by \cite[Proposition~7.26]{Cla13} we have $\partial_p f =\partial f$, and by \cite[Theorem~16.3]{BauCom17} we have \begin{equation*}
\{x\in X: 0\in \partial_p f(x)\} =\{x\in X: 0\in \partial f(x)\} =\argmin f,
\end{equation*}
from which the claimed equalities follow.
\end{proof}

\begin{proposition}[Properties of min-convex functions]\label{prop:proximity of min}
Let $I$ be a finite index set, let $f =\min_{i\in I} f_i$ with $f_i\colon X\to (-\infty,+\infty]$ proper, and let $\gamma> 0$. The following assertions hold.
\begin{enumerate}[label =(\alph*)]
\item\label{it:env f} $\forall x\in X,\ {}^\gamma\! f(x) =\min_{i\in I}{}^\gamma\! f_i(x)$.
\item\label{it:prox f} $\forall x\in X,\ \prox_{\gamma f}(x) =\{\prox_{\gamma f_i}(x): i\in I,\, {}^\gamma\! \!f(x) ={}^\gamma\! f_i(x)\}$.

\item\label{it:Fix prox f} $\Fix\prox_{\gamma f}\subseteq \{x\in X: 0\in \partial_p f_i(x) \text{~whenever~} f(x) =f_i(x)\}$.
\end{enumerate}
Further suppose that $f$ is min-convex (\emph{i.e.,} $f_i$ is lsc and convex, for each $i\in I$). Then	
\begin{enumerate}[label =(\alph*), resume]	
\item\label{it:local min} $\{x\in X: x \text{~is a local minimum of~} f\} =\{x\in X: x\in \argmin f_i \text{~whenever~} f(x) =f_i(x)\}$.
\item\label{it:Fix prox f_cvx} $\Fix\prox_{\gamma f}\subseteq \{x\in X: x\in \Fix\prox_{\gamma f_i} \text{~whenever~} f(x) =f_i(x)\}$. Consequently, every fixed point of $\prox_{\gamma f}$ is a local minimum of $f$.
\item\label{it:prox f_cvx} $\prox_{\gamma f}$ is union $1/2$-averaged nonexpansive. In particular, $\prox_{\gamma f}$ can be expressed as
$$\prox_{\gamma f}(x) =\{\prox_{\gamma f_i}(x): i\in \varphi(x)\}$$
with active selector $\varphi\colon X\setto I$ given by $\varphi(x) =\left\{i\in I:{}^\gamma\! f(x) ={}^\gamma\! f_i(x)\right\}$.
\end{enumerate}
\end{proposition}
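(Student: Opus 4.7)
My plan is to tackle (a)–(f) in order, using (a) and (b) as the structural backbone and then invoking Proposition~\ref{prop:basic} applied to the individual pieces $f_i$ for (c)–(e); throughout, the finiteness of $I$ is what lets me freely swap $\min_i$ with $\inf_y$ and pass subsequences of indices to limits by an eventually-constant argument.

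For (a) I would start from $f(y)+\tfrac{1}{2\gamma}\|x-y\|^2=\min_{i\in I}\bigl[f_i(y)+\tfrac{1}{2\gamma}\|x-y\|^2\bigr]$ and swap the finite $\min_i$ with $\inf_y$. For (b), given $p\in\prox_{\gamma f}(x)$ I would pick any $i$ with $f(p)=f_i(p)$ and observe the sandwich
\[
f_i(p)+\tfrac{1}{2\gamma}\|x-p\|^2 \;=\; {}^\gamma\! f(x) \;=\; \min_j{}^\gamma\! f_j(x) \;\le\; {}^\gamma\! f_i(x)\;\le\; f_i(p)+\tfrac{1}{2\gamma}\|x-p\|^2,
\]
which forces equality throughout, so $p\in\prox_{\gamma f_i}(x)$ and ${}^\gamma\! f(x)={}^\gamma\! f_i(x)$. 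The reverse inclusion uses $f(p)\le f_i(p)$ together with (a) to check that any such $p$ minimizes $y\mapsto f(y)+\tfrac{1}{2\gamma}\|x-y\|^2$.

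For (c) and (e), fix $x\in\Fix\prox_{\gamma f}$, so ${}^\gamma\! f(x)=f(x)$ by Proposition~\ref{prop:basic}\ref{it:Fix}. For any $i$ with $f_i(x)=f(x)$, the chain $f_i(x)=f(x)={}^\gamma\! f(x)\le f(y)+\tfrac{1}{2\gamma}\|x-y\|^2\le f_i(y)+\tfrac{1}{2\gamma}\|x-y\|^2$ shows $x\in\prox_{\gamma f_i}(x)=\Fix\prox_{\gamma f_i}$, which is (e), and Proposition~\ref{prop:basic}\ref{it:necessary} then yields $0\in\partial_p f_i(x)$, which is (c). For (d), if $x$ is a local min of $f$ and $f_i(x)=f(x)$, then on a neighborhood $f_i(y)\ge f(y)\ge f(x)=f_i(x)$, so $x$ is a local (hence, by convexity, global) minimum of $f_i$; conversely, lsc of each $f_i$ forces $f_i(y)>f(x)$ on a small ball for every $i$ with $f_i(x)>f(x)$, while the hypothesis handles the indices with $f_i(x)=f(x)$, and taking the $\min$ preserves the lower bound $f(x)$. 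The ``consequently'' clause in (e) then follows by combining (e) with Proposition~\ref{prop:basic}\ref{it:inclusions} applied to each convex $f_i$ (giving $\Fix\prox_{\gamma f_i}=\argmin f_i$) and feeding the result into (d).

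For (f), the formula in (b) already displays $\prox_{\gamma f}$ as a union of the firmly (i.e.\ $1/2$-averaged) nonexpansive operators $\prox_{\gamma f_i}$, with selector $\varphi(x)=\{i\in I:{}^\gamma\! f(x)={}^\gamma\! f_i(x)\}$, which is nonempty because the $\min$ in (a) is attained over the finite set $I$. The only nontrivial point — and the main obstacle of the whole proposition — is outer semicontinuity of $\varphi$. For this I would appeal to the standard fact that the Moreau envelope of a proper lsc convex function is real-valued and continuous on $X$, so each ${}^\gamma\! f_i$, and hence ${}^\gamma\! f=\min_i{}^\gamma\! f_i$, is continuous; then for $x_n\to x$ with $\varphi(x_n)\ni i_n\to i$, finiteness of $I$ gives $i_n=i$ eventually, and passing to the limit in ${}^\gamma\! f(x_n)={}^\gamma\! f_i(x_n)$ yields $i\in\varphi(x)$.
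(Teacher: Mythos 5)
Your proposal is correct, and for parts (a), (b) and (f) it follows essentially the same path as the paper (swap $\min_i$ with $\inf_y$; the sandwich argument for the two inclusions in (b); continuity of the convex Moreau envelopes plus an eventually-constant subsequence of indices for osc of $\varphi$ in (f) --- you are right that osc of $\varphi$ is the only nontrivial point there, and your explicit remark that $\varphi(x)\neq\varnothing$ because the finite minimum in (a) is attained is a detail the paper leaves implicit). Where you genuinely diverge is in (c), (e) and the forward direction of (d). The paper routes all of these through the proximal-subdifferential inclusion $\partial_p f(x)\subseteq\bigcap_{i:\,f(x)=f_i(x)}\partial_p f_i(x)$ of Mordukhovich--Nam, combined with Proposition~\ref{prop:basic}\ref{it:inclusions}. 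You instead argue directly from the definitions: from ${}^\gamma\! f(x)=f(x)$ and $f\leq f_i$ you read off $f_i(x)\leq f_i(y)+\tfrac{1}{2\gamma}\|x-y\|^2$ for all $y$, hence $x\in\Fix\prox_{\gamma f_i}$, which gives (e) and, via Proposition~\ref{prop:basic}\ref{it:necessary}, also (c); and for (d) you use the elementary observation that $f_i\geq f\geq f(x)=f_i(x)$ near a local minimizer, respectively that lower semicontinuity pushes the inactive $f_i$ strictly above $f(x)$ on a small ball. This is more self-contained (no external calculus rule for $\partial_p$ of a pointwise minimum) and in fact shows that the inclusion in (e) holds for merely proper $f_i$, convexity entering only through $\Fix\prox_{\gamma f_i}=\argmin f_i$ in the ``consequently'' clause; the paper's route, by contrast, makes the link to proximal subgradient calculus explicit, which is what the later sections lean on. The converse direction of (d) is the same lsc-plus-finiteness argument as the paper's, merely phrased positively rather than by contradiction. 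No gaps.
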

\begin{proof}
\ref{it:env f}: For all $x\in X$, we have that
\begin{subequations}
\begin{align*}
^\gamma\! f(x)& =\inf_{y\in X} \Big(\min_{i\in I} f_i(y)+\frac{1}{2\gamma}\|x-y\|^2\Big) \\
& =\inf_{y\in X} \min_{i\in I} \Big(f_i(y)+\frac{1}{2\gamma}\|x-y\|^2\Big) \\
& =\min_{i\in I} \inf_{y\in X} \Big(f_i(y)+\frac{1}{2\gamma}\|x-y\|^2\Big) 
 =\min_{i\in I} {^\gamma\! f}_i(x),
\end{align*}
\end{subequations}
where interchanging infimum and minimum is valid due to the finiteness of $I$. 

\ref{it:prox f}: Suppose $p\in\prox_{\gamma f}(x)$. Since $I$ is finite, there exists an index $i\in I$ such that $f(p) =f_i(p)$. Consequently, we have 
\begin{equation*}
{}^\gamma\! f(x) =f(p)+\frac{1}{2\gamma}\|x-p\|^2 =f_i(p)+\frac{1}{2\gamma}\|x-p\|^2\geq {}^\gamma\! f_i(x).
\end{equation*}
Together with \ref{it:env f}, this implies that ${}^\gamma\! f(x) ={}^\gamma\! f_i(x)$ and that $p\in\prox_{\gamma f_i}(x)$.

To prove the reverse inclusion, suppose  $p\in\prox_{\gamma f_i}(x)$ for some $i\in I$ such that ${}^\gamma\! f(x) ={}^\gamma\! f_i(x)$. Then
\begin{equation*}
{}^\gamma\! f(x) ={}^\gamma\! f_i(x) =f_i(p)+\frac{1}{2\gamma}\|x-p\|^2\geq f(p)+\frac{1}{2\gamma}\|x-p\|^2\geq {}^\gamma\! f(x),
\end{equation*}
which implies that $p\in\prox_{\gamma f}(x)$ and thus completes the proof of \ref{it:prox f}.

\ref{it:Fix prox f}: Let $x\in\Fix\prox_{\gamma f}$. Then $0\in\partial_p f(x)$ by Proposition~\ref{prop:basic}\ref{it:inclusions}. Noting from \cite[Proposition~2.12]{MorNam07} that
\begin{equation}\label{eq:subdiff-min}
\partial_p f(x)\subseteq \bigcap_{i\in I, f(x) =f_i(x)} \partial_p f_i(x).
\end{equation}
we have $0\in \partial_p f_i(x)$ whenever $f(x) =f_i(x)$. The claim follows.

\ref{it:local min}: First note that, for every $i\in I$, the convexity of $f_i$ combined with Proposition~\ref{prop:basic}\ref{it:inclusions} implies that 
\begin{equation}\label{eq:argmin =subdiff}
\argmin f_i =\Fix\prox_{\gamma f_i} =\{x\in X: 0\in \partial_p f_i(x)\}.
\end{equation}
Now, let $x$ be a local minimum of $f$. Then $0\in \partial_p f(x)$ and, by \eqref{eq:subdiff-min}, $0\in \partial_p f_i(x)$ whenever $f(x) =f_i(x)$. Together with \eqref{eq:argmin =subdiff}, this yields $x\in \argmin f_i$ whenever $f(x) =f_i(x)$.

Conversely, consider a point $x$ such that $x\in \argmin f_i$ whenever $f(x) =f_i(x)$. Suppose, by way of a contradiction, that $x$ is not a local minimum of $f$. Then there exists a sequence $(y_n)_{n\in\mathbb{N}}$ such that $y_n\in \mathbb{B}(x; 1/n)$ and $f(y_n)< f(x)$. Set $I_0 :=\{i\in I: f(x) =f_i(x)\}$. Then
\begin{equation*}
\forall i\in I_0,\, \forall n\in\mathbb{N},\quad f(y_n)< f(x) =f_i(x)\leq f_i(y_n),
\end{equation*}
where the last inequality holds because $x\in \argmin f_i$. Therefore, for each $n\in \mathbb{N}$, there exits a $j_n\in I\smallsetminus I_0$ such that $f(y_n) =f_{j_n}(y_n)$ . As $I$ is finite, by passing to a subsequence if necessary, we can and do assume that there is $j\in I\smallsetminus I_0$ such that $f(y_n) =f_j(y_n)$ for all $n\in \mathbb{N}$. Noting that $y_n\to x$ and using lower semicontinuity of $f_j$ give
\begin{equation*}
f_j(x) \leq \liminf_{n\to\infty}  f_j(y_n) =\liminf_{n\to\infty}  f(y_n)\leq f(x) =\min_{i\in I} f_i(x),
\end{equation*}
which implies that $j\in I_0$; a contradiction. 

\ref{it:Fix prox f_cvx}: Combine \ref{it:Fix prox f}, \ref{it:local min}, and \eqref{eq:argmin =subdiff}.

\ref{it:prox f_cvx}: Using \ref{it:prox f}, we have that $\prox_{\gamma f}(x) =\{\prox_{\gamma f_i}(x): i\in \varphi(x)\}$ with $\varphi(x) =\left\{i\in I:{}^\gamma\! f(x) ={}^\gamma\! f_i(x)\right\}$ for all $x\in X$.
By \cite[Proposition~12.28]{BauCom17}, $\prox_{\gamma f_i}$ is $1/2$-averaged nonexpansive for each $i\in I$, hence only osc of the active selector $\varphi$ remains to be verified. To this end, consider sequences $(x_n,i_n)\to (x,i)$ with $i_n\in\varphi(x_n)$ for all $n\in\mathbb{N}$. Because $I$ is finite, by passing to a subsequence, we may assume that $i_n =i$ for all $n\in\mathbb{N}$. Since ${}^\gamma\! f_j$ is continuous for each $j\in J$ \cite[Proposition~12.15]{BauCom17}, we have
\begin{equation*}
{}^\gamma\! f_i(x) =\lim_{n\to\infty} {}^\gamma\! f(x_n) =\lim_{n\to\infty} \min_{j\in I}{}^\gamma\! f_j(x_n) = \min_{j\in I}\lim_{n\to\infty}{}^\gamma\! f_j(x_n) =\min_{j\in I}{}^\gamma\! f_j(x) ={}^\gamma\! f(x).
\end{equation*}
This shows that $i\in\varphi(x)$ and completes the proof. 
\end{proof}

To conclude this section, we introduce one further notion which we shall require in certain cases of our analysis. It can be viewed as a kind of constraint qualification on the representation of a min-convex function.
\begin{definition}[Outer semicontinuous representations]
Let $f\colon X\to (-\infty,+\infty]$ be a min-convex function. We say $f$ is \emph{outer semicontinuously (osc) representable} at $\overline{x}\in X$ if there exists a min-convex representation, $f =\min_{i\in I}f_i$, such that the selector $\phi\colon X\setto I$ is osc at $\overline{x}$ where
  $$ \phi(x) :=\{i\in I:f(x) =f_i(x)\}. $$
If there  exists a single representation such that $\phi$ is everywhere osc, then we say that $f$ is osc representable.
\end{definition}
Clearly every convex function is osc representable. Moreover, the following proposition shows that, in particular, the Moreau envelope of a min-convex function is also osc representable.
\begin{proposition}\label{prop:continuous rep}
Let $I$ be a finite index set, let $f_i\colon X\to\mathbb{R}$ be continuous, and set $f :=\min_{i\in I}f_i$. Then the selector $\phi(x) :=\{i\in I:f(x) =f_i(x)\}$ is osc. Consequently, the Moreau envelope of a min-convex function is always osc representable.
\end{proposition}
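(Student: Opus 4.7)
The plan is to verify outer semicontinuity of $\phi$ directly from the definition, using only the finiteness of $I$ and the continuity of the $f_i$'s. Fix $\bar{x}\in X$ and take sequences $x_n\to\bar{x}$ in $X$ and $i_n\to i$ in $I$ with $i_n\in\phi(x_n)$ for every $n$; by definition this means $f(x_n)=f_{i_n}(x_n)$. Since $I$ is finite and discrete, the convergence $i_n\to i$ forces $i_n=i$ for all sufficiently large $n$, so in fact $f(x_n)=f_i(x_n)$ eventually.

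Next I would pass to the limit. The function $f=\min_{j\in I}f_j$ is continuous as the pointwise minimum of finitely many continuous functions, and $f_i$ is continuous by hypothesis. Therefore
\begin{equation*}
f(\bar{x})=\lim_{n\to\infty}f(x_n)=\lim_{n\to\infty}f_i(x_n)=f_i(\bar{x}),
\end{equation*}
which shows $i\in\phi(\bar{x})$ and establishes outer semicontinuity of $\phi$ at $\bar{x}$. Since $\bar{x}$ was arbitrary, $\phi$ is osc on all of $X$.

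For the second assertion, let $f=\min_{i\in I}f_i$ be any min-convex representation. By Proposition~\ref{prop:proximity of min}\ref{it:env f} the Moreau envelope satisfies ${}^\gamma\! f=\min_{i\in I}{}^\gamma\! f_i$, and each ${}^\gamma\! f_i$ is real-valued and continuous (this is the standard fact used earlier from \cite[Proposition~12.15]{BauCom17}). Thus ${}^\gamma\! f$ is itself a minimum of finitely many continuous real-valued functions, and the first part of the proposition applies to it, delivering an osc selector for the representation $\min_{i\in I}{}^\gamma\! f_i$.

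There is no substantial obstacle here: the only subtlety is to recognize that outer semicontinuity into a discrete finite set reduces to eventual constancy along convergent sequences, so the argument hinges on the pigeonhole-style finiteness step together with the continuity of the ingredients. No use of convexity or lower semicontinuity is required for the first statement, only continuity; lsc/convexity enter the second statement solely to guarantee that the Moreau envelopes of the pieces are everywhere-defined continuous functions.
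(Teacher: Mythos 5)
Your argument is correct and coincides with the paper's own proof: the same finiteness/eventual-constancy step, the same passage to the limit using continuity of $f$ and $f_i$, and the same reduction of the second assertion to the continuity of the Moreau envelopes of the convex pieces via Proposition~\ref{prop:proximity of min}\ref{it:env f}. Nothing further is needed.
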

\begin{proof}
Let $x_n\to x$ and $i_n\to i$ with $i_n\in\phi(x_n)$ for all $n\in\mathbb{N}$. Since $I$ is finite, there exists an $n_0\in\mathbb{N}$ such that $i_n =i$ for all $n\geq n_0$. Since $\{f_i\}$ are continuous, $f$ is also continuous as the minimum of continuous functions. Consequently, 
$$ f_i(x) =\lim_{n\to\infty}f_i(x_n) = \lim_{n\to\infty}f(x_n) =f(x),$$
which shows that $i\in\phi(x)$ and establishes the osc of $\phi$. 

Now, if a function $g\colon X\to (-\infty,+\infty]$ is min-convex, there exists a finite index set, $I$, and proper, lsc convex functions $\{g_i\}_{i\in I}$ such that $g =\min_{i\in I}g_i$. For any $\gamma>0$, Proposition~\ref{prop:proximity of min}\ref{it:env f} shows that ${}^\gamma\! g =\min_{i\in I}{}^\gamma\! g_i$. Since the Moreau envelope of a proper, lsc convex function is always continuous \cite[Proposition~12.15]{BauCom17}, the claim follows.
\end{proof}

\section{Proximal algorithms for min-convex minimization}\label{s:proximal algorithms}
In this section, we use the results of the last two sections to systematically analyze proximal algorithms applied to min-convex functions. We consider four different settings: projection algorithms, the proximal point algorithm, the forward-backward method, and Douglas--Rachford splitting.

\subsection{Projection algorithms}\label{ssec:projection algorithms}
Given sets $C_1,\dots,C_m\subseteq X$ with nonempty intersection, the \emph{feasibility problem} is to
\begin{equation*}
\text{find~}x\in\bigcap_{j =1}^mC_j.
\end{equation*}
In this section, we consider the case in which each set $C_j$ is \emph{union convex} by which we mean that it can be expressed as a finite union of closed convex sets. Recall that the \emph{projector} onto a nonempty set $C$ in $X$ is the set-valued operator $P_C\colon X\setto C$ define by 
\begin{equation*}
P_C(x) :=\prox_{\iota_C}(x) =\argmin_{c\in C} \|x-c\| =\{c\in C: \|x-c\| =\dist(x, C)\},
\end{equation*}
where $\dist(x, C) :=\inf_{c\in C} \|x-c\|$ is the \emph{distance} from $x$ to $C$.

\begin{proposition}[Union convex sets]\label{prop:projection operators}
Let $A =\cup_{i\in I}A_i$ and $B =\cup_{j\in J}B_j$ where $I,J$ are finite index sets and $A_i, B_j$ are nonempty closed convex sets in $X$. The following assertions hold.
\begin{enumerate}[label =(\alph*)]
\item\label{it:proj} The projector $P_A$ is union $1/2$-averaged nonexpansive with 
\begin{equation*}
P_A(x) =\{P_{A_i}(x):i\in I,\, \dist(x,A_i) =\dist(x,A)\}
\end{equation*}
and the \emph{reflector} $R_A :=2P_A-\Id$ is union nonexpansive.
\item\label{it:DR op} The Douglas--Rachford (DR) operator given by
\begin{equation*}
T_{A,B}(x) :=\frac{\Id+R_B\circ R_A}{2}(x) =\{ x+b-a\in X: a\in P_A(x),\,b\in P_B(2a-x)\}
\end{equation*}
is union $1/2$-averaged nonexpansive.
\end{enumerate}
\end{proposition}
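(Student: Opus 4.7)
The plan for part~\ref{it:proj} is to reduce to the min-convex machinery already developed. Write $\iota_A = \min_{i\in I}\iota_{A_i}$ (this holds because $A=\cup_i A_i$), which exhibits $\iota_A$ as a min-convex function with convex, lsc pieces. Since $P_A = \prox_{\iota_A} = \prox_{\gamma\iota_A}$ for any $\gamma>0$ (as $\gamma\iota_A = \iota_A$), Proposition~\ref{prop:proximity of min}\ref{it:prox f_cvx} applied with $\gamma=1$ immediately yields that $P_A$ is union $1/2$-averaged nonexpansive with active selector $\varphi(x) = \{i\in I : {}^{1}\!\iota_A(x) = {}^{1}\!\iota_{A_i}(x)\}$. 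A one-line computation rewrites the Moreau envelope as ${}^{1}\!\iota_{A_i}(x) = \tfrac{1}{2}\dist(x,A_i)^2$ and ${}^{1}\!\iota_A(x) = \tfrac{1}{2}\dist(x,A)^2$, converting the selector into the stated form $\{i\in I:\dist(x,A_i)=\dist(x,A)\}$. For the reflector, I invoke Proposition~\ref{prop:averaged}\ref{it:prop averaged 2}: union $1/2$-averaged nonexpansiveness of $P_A$ gives a union nonexpansive $R$ with $P_A = \tfrac{1}{2}\Id + \tfrac{1}{2}R$, so $R = 2P_A - \Id = R_A$ is union nonexpansive.

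For part~\ref{it:DR op}, the strategy is again assembly from earlier results. Since $R_A$ and $R_B$ are union nonexpansive by part~\ref{it:proj}, their composition $R_B\circ R_A$ is union nonexpansive by Proposition~\ref{prop:closedness}\ref{it:UAN composition} (applied in the nonexpansive variant). Writing $T_{A,B} = \tfrac{1}{2}\Id + \tfrac{1}{2}(R_B\circ R_A)$ and invoking Proposition~\ref{prop:averaged}\ref{it:prop averaged 2} with $\alpha=1/2$ then gives that $T_{A,B}$ is union $1/2$-averaged nonexpansive. The explicit formula follows by unfolding: $R_A(x) = \{2a-x:a\in P_A(x)\}$, so
\begin{equation*}
R_B(R_A(x)) = \bigl\{2b - (2a-x) : a\in P_A(x),\ b\in P_B(2a-x)\bigr\},
\end{equation*}
and averaging with $x$ collapses $\tfrac{1}{2}(x + 2b - 2a + x)$ to $x+b-a$, matching the claim.

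The main subtlety, rather than obstacle, is being careful that the set-valued composition in part~\ref{it:DR op} is compatible with the Minkowski/union structure built into Definition~\ref{def:union averaged}. This is precisely what Proposition~\ref{prop:closedness}\ref{it:UAN composition} was designed to handle, including the outer semicontinuity of the composed active selector, so once that result is in hand both parts become essentially bookkeeping. No new fixed-point or nonexpansiveness estimate is required beyond the convexity-based facts already recorded.
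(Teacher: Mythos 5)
Your proposal is correct and follows essentially the same route as the paper: part~\ref{it:proj} is obtained by taking $f_i=\iota_{A_i}$ in Proposition~\ref{prop:proximity of min}\ref{it:prox f_cvx} and then invoking Proposition~\ref{prop:averaged} for the reflector, and part~\ref{it:DR op} composes the two union nonexpansive reflectors via Proposition~\ref{prop:closedness}\ref{it:UAN composition} and applies Proposition~\ref{prop:averaged}\ref{it:prop averaged 2} with $\alpha=1/2$. The only additions beyond the paper's proof are the (correct) explicit conversion of the Moreau-envelope selector into the distance condition via ${}^{1}\!\iota_{A_i}(x)=\tfrac12\dist(x,A_i)^2$ and the unfolding of the set-valued composition into the stated formula for $T_{A,B}$, which the paper leaves implicit.
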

\begin{proof}
\ref{it:proj}: Let $f_i =\iota_{A_i}$ in Proposition~\ref{prop:proximity of min}\ref{it:prox f_cvx} and then apply Proposition~\ref{prop:averaged}.

\ref{it:DR op}: Since both $R_A$ and $R_B$ are union nonexpansive by \ref{it:proj}, Proposition~\ref{prop:closedness}\ref{it:UAN composition} implies that $R_BR_A$ is also union nonexpansive. The result then follows by applying Proposition~\ref{prop:averaged}\ref{it:prop averaged 2} to $R_BR_A$ with $\alpha =1/2$.
\end{proof}

\begin{remark}
In the setting of Proposition~\ref{prop:projection operators}, one can also deduce union averaged nonexpansiveness of relaxations of projection operators such as those consider in the so-called \emph{generalized Douglas--Rachford operator} \cite{DaoPha16,DaoPha17} 
which includes the \emph{relaxed averaged alternating reflection operator} \cite{Luk08}.
However, we shall focus on algorithms involving projectors and (ungeneralized) Douglas--Rachford operators.
\end{remark}

We now state our results regarding convergence of projection algorithms. We consider three different algorithms: the \emph{method of cyclic projections}, the \emph{cyclic Douglas--Rachford method} \cite{BorTam14,BorTam15} and the \emph{cyclically anchored Douglas--Rachford method} \cite{BauNolPha15}. The latter includes the usual two-set Douglas--Rachford method as a special case.

\begin{theorem}[Projection algorithms on union convex sets]\label{thm:proj}
Let $J :=\{1,\dots,m\}$ and let $\{C_j\}_{j\in J}$ be a finite collection of union convex sets in $X$. Given $x_0\in X$, define $x_{n+1}\in T(x_n)$ for all $n\in\mathbb{N}$ in any one of the following cases.
\begin{enumerate}[label =(\alph*)]
\item 
 (method of cyclic projections) $T =P_{C_m}\circ\dots\circ P_{C_2}\circ P_{C_1}$.
\item
 (cyclic Douglas--Rachford method) $T =T_{C_m,C_1}\circ\dots\circ T_{C_2,C_3}\circ T_{C_1,C_2}$.
\item 
 (cyclically anchored Douglas--Rachford method) $T =T_{C_1,C_m}\circ\dots\circ T_{C_1,C_3}\circ T_{C_1,C_2}$.
\end{enumerate}   
Then $\cap_{j \in J} C_j\subseteq \StrFix T$. Moreover, if $x^*\in\StrFix T$, then there exists $r> 0$ such that, whenever $x_0\in \inte\mathbb{B}(x^*;r)$, the sequence $(x_n)_{n\in\mathbb{N}}$ converges to a point $\overline{x}\in\Fix T\cap \mathbb{B}(x^*;r)$. 
\end{theorem}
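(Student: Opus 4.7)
The plan is to recast each of the three operators $T$ as a composition of union $1/2$-averaged nonexpansive operators, verify the strong-fixed-point inclusion directly, and then invoke the convergence theory developed in Section~\ref{s:convergence}. By Proposition~\ref{prop:projection operators}\ref{it:proj}, each projector $P_{C_j}$ onto a union convex set is union $1/2$-averaged nonexpansive, and by Proposition~\ref{prop:projection operators}\ref{it:DR op} the same is true of each Douglas--Rachford operator $T_{C_i,C_j}$. Applying Proposition~\ref{prop:closedness}\ref{it:UAN composition} to the $m$-fold composition then yields, in all three cases, that $T$ is union $\alpha$-averaged nonexpansive with $\alpha = m/(m+1) \in (0,1)$ (since each constituent has averaging parameter $1/2$, so $\alpha_j/(1-\alpha_j) = 1$ and the formula \eqref{eq:composition alpha} gives $\alpha = (1+1/m)^{-1}$).

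Next I would verify the inclusion $\bigcap_{j\in J} C_j \subseteq \StrFix T$ case by case. If $x \in \bigcap_{j\in J} C_j$, then $\dist(x,C_j) = 0$ for every $j\in J$, so Proposition~\ref{prop:projection operators}\ref{it:proj} gives $P_{C_j}(x) = \{x\}$ for all $j$; iterating along the cycle shows $T(x) = \{x\}$ in case~(a). For cases~(b) and (c), whenever $x \in A \cap B$ with $A, B$ among the $C_j$, the formula in Proposition~\ref{prop:projection operators}\ref{it:DR op} gives
\begin{equation*}
T_{A,B}(x) = \{x + b - a : a \in P_A(x),\, b \in P_B(2a-x)\} = \{x + x - x\} = \{x\},
\end{equation*}
since $P_A(x) = \{x\}$ and $P_B(2x-x) = P_B(x) = \{x\}$. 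A straightforward induction on the number of factors then yields $T(x) = \{x\}$.

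Finally, given $x^* \in \StrFix T$, set $r := r(x^*;T)$, which lies in $(0,+\infty]$ by Proposition~\ref{prop:prop2.1 Tam17}. The iteration $x_{n+1} \in T(x_n)$ corresponds to the constant choice $\lambda_n \equiv 1$, which lies in $(0,1/\alpha]$ since $\alpha < 1$, and satisfies $\liminf_{n\to\infty}\lambda_n(1/\alpha - \lambda_n) = 1/\alpha - 1 > 0$. Corollary~\ref{cor:union averaged} applied to $T$ therefore yields convergence of $(x_n)_{n\in\mathbb{N}}$ to some $\overline{x} \in \Fix T \cap \mathbb{B}(x^*;r)$, which completes the proof.

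The main obstacle is essentially bookkeeping rather than a genuine analytic difficulty: one must carefully inspect how the active selectors of the projectors and Douglas--Rachford operators interact under composition, and confirm case by case that points in the intersection are indeed strong fixed points. Once the composition closure result of Proposition~\ref{prop:closedness}\ref{it:UAN composition} and the local convergence result of Corollary~\ref{cor:union averaged} are in hand, no new estimates are required, and the local (rather than global) scope of the conclusion is precisely what the radius of attraction $r(x^*;T)$ encodes.
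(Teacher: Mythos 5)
Your proposal is correct and follows essentially the same route as the paper: union averaged nonexpansiveness of $T$ via Proposition~\ref{prop:projection operators} and Proposition~\ref{prop:closedness}\ref{it:UAN composition}, the (straightforward) verification that intersection points are strong fixed points, and then Corollary~\ref{cor:union averaged} with $\lambda_n\equiv 1$. The only nit is a harmless counting slip: in case~(c) the composition has $m-1$ factors, so the averaging parameter is $(m-1)/m$ rather than $m/(m+1)$, but all that matters is that it lies in $(0,1)$.
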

\begin{proof}
It is straightforward to check that $\cap_{j \in J} C_j\subseteq \StrFix T$. To show the second claim, first combine Proposition~\ref{prop:closedness}\ref{it:UAN composition} and Proposition~\ref{prop:projection operators} to deduce that $T$ is union averaged nonexpansive. The results then follows from Corollary~\ref{cor:union averaged} with $\lambda_n =1$.
\end{proof}

\begin{theorem}[Cyclically anchored Douglas--Rachford method]\label{thm:CADR}
Let $J :=\{1,\dots,m\}$ and suppose $\{C_j\}_{j\in J}$ is a finite collection of union convex sets in $X$. Consider a sequence $(x_n)_{n\in\mathbb{N}}$ with $x_0\in X$ satisfying
\begin{equation*}
\forall n\in\mathbb{N},\quad x_{n+1}\in T_{C_1,C_{i_n}}(x_n)\text{~~where~~}i_n =\left(n\bmod{(m-1)}\right)+2.
\end{equation*}
If $x^*\in \cap_{j\in J\setminus\{1\}} \StrFix T_{C_1,C_j}$ (in particular, if $x^*\in \cap_{i\in J}C_j$), then there exists an $r> 0$ such that the sequence $(x_n)_{n\in\mathbb{N}}$ converges to a point $\overline{x}\in \cap_{j\in J\setminus\{1\}} \Fix T_{C_1,C_j}$ whenever $x_0\in \inte\mathbb{B}(x^*;r)$. Moreover, if the set $C_1$ is convex, then  $P_{C_1}(\overline{x})\in \cap_{j \in J} C_j$.
\end{theorem}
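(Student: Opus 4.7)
The plan is to identify the iteration as a direct instance of Corollary~\ref{cor:cyclic algo}. By Proposition~\ref{prop:projection operators}\ref{it:DR op}, each Douglas--Rachford operator $T_{C_1, C_j}$ for $j \in J \setminus \{1\}$ is union $1/2$-averaged nonexpansive. After relabelling the family $\{T_{C_1, C_{j+1}}\}_{j=1}^{m-1}$, the cyclic control $i_n = (n \bmod (m-1)) + 2$ becomes a cycle through $m-1$ operators in precisely the form required by the corollary.

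Before invoking the corollary, I would verify the parenthetical ``in particular'' claim. If $x^* \in \bigcap_{j \in J} C_j$, then $\dist(x^*, C_1) = \dist(x^*, C_j) = 0$ for each $j \in J \setminus \{1\}$, hence $P_{C_1}(x^*) = \{x^*\}$ and $P_{C_j}(x^*) = \{x^*\}$. Substituting into the formula of Proposition~\ref{prop:projection operators}\ref{it:DR op} yields $T_{C_1, C_j}(x^*) = \{x^*\}$, so $x^* \in \bigcap_{j \in J \setminus \{1\}} \StrFix T_{C_1, C_j}$ as required.

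With the common-strong-fixed-point hypothesis secured, the second conclusion of Corollary~\ref{cor:cyclic algo} applies directly: for all sufficiently small $r > 0$ and any $x_0 \in \inte \mathbb{B}(x^*; r)$, the entire sequence $(x_n)_{n\in\mathbb{N}}$ converges to a point $\overline{x} \in \bigcap_{j \in J \setminus \{1\}} \Fix T_{C_1, C_j} \cap \mathbb{B}(x^*; r)$.

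The only step that still requires an argument is the ``moreover'' claim, and this is where the slight subtlety lies. Since $C_1$ is convex, $P_{C_1}$ is single-valued; set $p := P_{C_1}(\overline{x}) \in C_1$. For each $j \in J \setminus \{1\}$, the relation $\overline{x} \in T_{C_1, C_j}(\overline{x})$ supplies some $b_j \in P_{C_j}(2p - \overline{x})$ with $\overline{x} = \overline{x} + b_j - p$, forcing $b_j = p$. Since $b_j \in C_j$ by definition of the projector, we obtain $p \in C_j$ for every $j \in J \setminus \{1\}$, and hence $P_{C_1}(\overline{x}) = p \in \bigcap_{j \in J} C_j$. The conceptual point is that convexity of $C_1$ is precisely what is needed so that the $P_{C_1}$-component of the fixed-point relation $a = b$ of the Douglas--Rachford operator is the \emph{same} point $p$ across all choices of $j$.
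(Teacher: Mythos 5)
Your proof is correct and, for the main convergence claim, follows exactly the paper's route: union $1/2$-averaged nonexpansiveness of each $T_{C_1,C_j}$ via Proposition~\ref{prop:projection operators}\ref{it:DR op}, the observation that $\cap_{j\in J}C_j\subseteq\cap_{j\in J\setminus\{1\}}\StrFix T_{C_1,C_j}$, and an application of Corollary~\ref{cor:cyclic algo} with $r$ taken small enough (the paper makes the choice $r:=\min_{j\in J\setminus\{1\}}r(x^*;T_{C_1,C_j})$ explicit). The one place you genuinely diverge is the ``moreover'' claim: the paper disposes of it by citing an external identity from Bauschke--Dao, whereas you unwind the fixed-point relation directly --- convexity of $C_1$ forces the unique $a=P_{C_1}(\overline{x})=p$ in the representation $T_{C_1,C_j}(\overline{x})=\{\overline{x}+b-a\}$, so $\overline{x}\in T_{C_1,C_j}(\overline{x})$ forces $b=p\in C_j$ for every $j$. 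Your version is self-contained and makes transparent exactly why convexity of the anchor set is needed (so that the projection point $p$ is the \emph{same} across all $j$); what the citation buys the authors is merely brevity. Both arguments are sound.
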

\begin{proof}
Let $x^*\in \cap_{j\in J\setminus\{1\}} \StrFix T_{C_1,C_j}$.
For every $j\in J\setminus\{1\}$, the definition of $T_{C_1,C_j}$ yields $C_1\cap C_j\subseteq \StrFix T_{C_1,C_j}$. Consequently, we have
   $$\cap_{j \in J} C_j\subseteq \cap_{j\in J\setminus\{1\}} \StrFix T_{C_1,C_j}\subseteq \StrFix ( T_{C_1,C_m}\circ\dots\circ T_{C_1,C_2}), $$   
which shows, in particular, that $x^*$ is strong fixed point of the cyclically anchored Douglas--Rachford operator. By Proposition~\ref{prop:projection operators}\ref{it:DR op}, $\{T_{C_1,C_j}\}_{j\in J\setminus\{1\}}$ is a collection of union $1/2$-averaged operators. By setting $r :=\min_{j\in J\setminus\{1\}} r(x^*;T_{C_1,C_j})> 0$ and applying Corollary~\ref{cor:cyclic algo}, we deduce that the sequence $(x_n)_{n\in\mathbb{N}}$ converges to a point $\overline{x}\in \cap_{j\in J\setminus\{1\}} \Fix T_{C_1,C_j}$ whenever $x_0\in \inte\mathbb{B}(x^*;r)$, which proves the first claim. Moreover, if $C_1$ is convex, then \cite[Equation~(23)]{BauDao17} implies that $P_{C_1}(\overline{x})\in C_1\cap C_j$ for every $j\in J\setminus\{1\}$, which completes the proof. 
\end{proof}

In particular, setting $m =2$ in Theorem~\ref{thm:CADR}, we recover the result for the usual two-set Douglas--Rachford method as a special case. This was original proven by Bauschke and Noll \cite[Theorem~1]{BauNol14}.

\begin{corollary}[Douglas--Rachford method {\cite[Theorem~1]{BauNol14}}]
Let $C_1$ and $C_2$ be union convex sets in $X$. Consider a sequence $(x_n)_{n\in\mathbb{N}}$ with $x_0\in X$ satisfying
\begin{equation*}
\forall n\in\mathbb{N},\quad x_{n+1}\in T_{C_1,C_{2}}(x_n).
\end{equation*}
If $x^*\in \StrFix T_{C_1,C_2}$ (in particular, if $x^*\in C_1\cap C_2$), then there exists an $r> 0$ such that the sequence $(x_n)_{n\in\mathbb{N}}$ converges to a point $\overline{x}\in\Fix T_{C_1,C_2}$ whenever $x_0\in \inte\mathbb{B}(x^*;r)$. Moreover, there exists a point $c\in P_{C_1}(\overline{x})$ such that $c\in C_1\cap C_2$.
\end{corollary}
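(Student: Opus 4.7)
The plan is to derive this corollary as the $m=2$ specialization of Theorem~\ref{thm:CADR}, and then unpack the fixed point condition using the explicit description of the Douglas--Rachford operator to obtain the ``moreover'' part.

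For the convergence part, set $J:=\{1,2\}$ in Theorem~\ref{thm:CADR}. The index set $J\setminus\{1\}$ reduces to $\{2\}$, so the cyclic control becomes $i_n = (n\bmod 1)+2 = 2$ for every $n$, and the recurrence in Theorem~\ref{thm:CADR} becomes exactly $x_{n+1}\in T_{C_1,C_2}(x_n)$ as in the statement. The hypothesis $x^*\in\bigcap_{j\in J\setminus\{1\}}\StrFix T_{C_1,C_j}$ collapses to $x^*\in\StrFix T_{C_1,C_2}$, matching the assumption. Applying Theorem~\ref{thm:CADR} thus produces an $r>0$ such that, for every $x_0\in\inte\mathbb{B}(x^*;r)$, the sequence $(x_n)_{n\in\mathbb{N}}$ converges to some $\overline{x}\in\Fix T_{C_1,C_2}$.

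For the ``moreover'' part I cannot quote the analogous moreover clause of Theorem~\ref{thm:CADR}, since that clause requires $C_1$ to be convex, whereas here $C_1$ is only union convex. Instead I would read off the conclusion directly from the formula for $T_{C_1,C_2}$ given in Proposition~\ref{prop:projection operators}\ref{it:DR op}. Since $\overline{x}\in T_{C_1,C_2}(\overline{x})$, there exist $a\in P_{C_1}(\overline{x})$ and $b\in P_{C_2}(2a-\overline{x})$ with $\overline{x} = \overline{x}+b-a$, which forces $a=b$. Setting $c:=a$, we have $c\in P_{C_1}(\overline{x})\subseteq C_1$ and simultaneously $c=b\in P_{C_2}(2a-\overline{x})\subseteq C_2$, so $c\in C_1\cap C_2$, as required.

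I do not expect any real obstacle here: the first part is a clean specialization of the previously established cyclically anchored Douglas--Rachford theorem, and the second part is a one-line algebraic consequence of the defining formula for $T_{C_1,C_2}$. The corollary's conclusion is in fact strictly weaker than the corresponding ``moreover'' statement in Theorem~\ref{thm:CADR}, since one only asserts the existence of a single element of the (possibly multi-valued) projection $P_{C_1}(\overline{x})$ landing in $C_1\cap C_2$, rather than that every element of $P_{C_1}(\overline{x})$ does, which is why no convexity of $C_1$ needs to be assumed.
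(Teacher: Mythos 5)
Your proof is correct and follows essentially the same route as the paper: the convergence part is the $m=2$ specialization of Theorem~\ref{thm:CADR}, exactly as in the paper. For the ``moreover'' part the paper cites \cite[Equation~(22)]{BauDao17} to extract a point $c\in P_{C_1}(\overline{x})$ with $c\in P_{C_2}(2c-\overline{x})$, whereas you derive the same fact directly from the formula for $T_{C_1,C_2}$ in Proposition~\ref{prop:projection operators} ($\overline{x}=\overline{x}+b-a$ forces $a=b$); the two arguments are mathematically identical, yours simply being self-contained.
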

\begin{proof}
By applying Theorem~\ref{thm:CADR} with $m =2$, it follows that $(x_n)_{n\in\mathbb{N}}$ converges to a point $\overline{x}\in\Fix T_{C_1,C_2}$. Using \cite[Equation~(22)]{BauDao17} implies the existence of a point $c\in P_{C_1}(\overline{x})$ such that $c\in P_{C_2}(2c-\overline{x})$. Consequently, $c\in C_1\cap C_2$ which completes the proof.
\end{proof}

In the following corollary, we deduce the corresponding convergence result for the method of cyclic projections by observing that it can be cast as a special instance of the cyclically anchored Douglas--Rachford method.
\begin{corollary}[Method of cyclic projections]\label{cor:MAP}
Let $J :=\{1,\dots,m\}$ and let $\{C_j\}_{j\in J}$ be a finite collection of union convex sets in $X$ with $x^*\in \cap_{j\in J} C_j$. Define a sequence $(x_n)_{n\in\mathbb{N}}$ with starting point $x_0\in X$ according to
\begin{equation*}
\forall n\in\mathbb{N},\quad x_{n+1}\in P_{C_{i_n}}(x_n), \text{~~where~~}i_n =\left(n\bmod{m}\right)+1.
\end{equation*}
Then there exists an $r> 0$ such that $(x_n)_{n\in\mathbb{N}}$ converges to a point $\overline{x}\in \cap_{j\in J} C_j$ whenever $x_0\in \inte\mathbb{B}(x^*;r)$. 
\end{corollary}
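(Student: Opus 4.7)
The plan is to recognize the method of cyclic projections as a special instance of the cyclically anchored Douglas--Rachford method from \cref{thm:CADR}, exactly as hinted in the text preceding the statement. Concretely, I would augment the collection by prepending $C_0 := X$ (which is convex, hence trivially union convex) as an anchor, and apply \cref{thm:CADR} to the enlarged family $\{C_0, C_1, \dots, C_m\}$ of $m+1$ union convex sets with $C_0$ playing the role of the anchor.

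The first key observation is that, with $C_0 = X$, the Douglas--Rachford operator collapses to a projector. Indeed, since $P_X = \Id$, the defining formula in \cref{prop:projection operators}\ref{it:DR op} gives
\begin{equation*}
T_{C_0, C_j}(x) = \{x + b - a : a \in P_X(x),\, b \in P_{C_j}(2a - x)\} = P_{C_j}(x)
\end{equation*}
for every $j \in J$. Thus the MAP iteration $x_{n+1} \in P_{C_{i_n}}(x_n)$ with $i_n = (n \bmod m) + 1$ is literally the CADR iteration driven by $\{C_0, C_1, \dots, C_m\}$, once one performs the harmless index relabelling that places the anchor $C_0$ in the ``$C_1$''-slot of \cref{thm:CADR} and shifts the cycling indices accordingly.

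Next, I would verify the strong fixed point hypothesis of \cref{thm:CADR}: it is required that $x^* \in \bigcap_{j \in J} \StrFix T_{C_0, C_j} = \bigcap_{j \in J} \StrFix P_{C_j}$. This is immediate from $x^* \in \bigcap_{j \in J} C_j$, since for any $c \in P_{C_j}(x^*)$ one has $\|x^* - c\| = \dist(x^*, C_j) = 0$, forcing $c = x^*$ and hence $P_{C_j}(x^*) = \{x^*\}$. With this in hand, \cref{thm:CADR} applied to the augmented collection furnishes an $r > 0$ such that, whenever $x_0 \in \inte \mathbb{B}(x^*; r)$, the sequence $(x_n)$ converges to a point $\overline{x} \in \bigcap_{j \in J} \Fix T_{C_0, C_j} = \bigcap_{j \in J} \Fix P_{C_j} \subseteq \bigcap_{j \in J} C_j$, which is the desired conclusion. (The ``moreover'' clause of \cref{thm:CADR} is trivially satisfied since $C_0 = X$ is convex and $P_{C_0} = \Id$, so it yields nothing beyond what is already established.) The argument is a clean reduction, and the only real subtlety is the cyclic-index bookkeeping, which is routine; I do not anticipate any genuine obstacle.
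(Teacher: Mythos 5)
Your proposal is correct and follows essentially the same route as the paper: both reduce the method of cyclic projections to the cyclically anchored Douglas--Rachford method of Theorem~\ref{thm:CADR} by adjoining the whole space $X$ as the (convex) anchor set and observing that $T_{X,C}=P_C$ with $\StrFix P_C=\Fix P_C=C$. The extra details you supply (the explicit collapse of $T_{X,C_j}$ to $P_{C_j}$ and the verification of the strong fixed point hypothesis) are correct and merely make explicit what the paper leaves implicit.
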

\begin{proof}
We first note that, for a nonempty closed set $C$, $T_{X,C} =P_C$ and $\StrFix P_C =\Fix P_C =C$. The result now follows by applying Theorem~\ref{thm:CADR} to $\{X, C_1, \dots, C_m\}$; a collection of union convex sets with the first set, $X$, being convex.
\end{proof}

\begin{remark}[Sparse affine feasibility]
Given a matrix $A$, point $b\in\range(A)$ and a sparsity bound $s$, the \emph{sparse affine feasibility} problem asks for a point $x$ such that $Ax =b$ and $\|x\|_s\leq s$. The \emph{method of alternating projections} (\emph{i.e.,} Corollary~\ref{cor:MAP} with $m =2$) applied to this problem has been studied by \cite{HesLukNeu14} who used \emph{regularity notions} to show local linear convergence of the method of alternating projections.  Whilst our Corollary~\ref{cor:MAP} does applied to deduce local convergence for this problem, it does not say anything about the rate.
\end{remark}

\begin{remark}[cyclic Douglas--Rachford method]
It is not clear if the conclusions of Theorem~\ref{thm:proj} can be improved for the cyclic Douglas--Rachford method, specially, if it can be shown projectors of the limit $\overline{x}$ can be used to produce a point in the intersection $\cap_{j\in J}\cap C_j$, as in the case in the convex setting \cite{BorTam14}. To illustrate the difficulty, consider the case when $J =\{1,2,3\}$. In this case, Theorem~\ref{thm:proj} gives that the limit $\overline{x}$ satisfies $ \overline{x} \in \Fix\left(T_{C_3,C_1}\circ T_{C_2,C_3}\circ T_{C_1,C_2} \right). $ From this it is only possible to deduce that there exists convex subsets $C_j',C_j''\subseteq C_j$, for $j\in J$, with 
  $$ \overline{x} \in \Fix\left(T_{C_3'',C_1''}\circ T_{C_2'',C_3'}\circ T_{C_1',C_2'} \right), $$
but where it is not necessarily the case that $C_j' =C_j''$. 
\end{remark}

\subsection{The proximal point algorithm} 
In this section, we consider the minimization problem
\begin{equation}\label{eq:min g}
\min_{x\in X} g(x)
\end{equation}
where $g\colon X\to (-\infty,+\infty]$ is min-convex. 

Let $\gamma>0$. Given $x_0\in X$, the \emph{proximal point algorithm} (with fixed stepsize) for \eqref{eq:min g} is given by
\begin{equation}\label{eq:PPA}
\forall n\in\mathbb{N},\quad  x_{n+1} \in T_{\rm PPA}(x_n) :=\prox_{\gamma g}(x_n).
\end{equation}
By applying our main convergence result, we are able to deduce the following result regarding convergence of the proximal point algorithm the min-convex function $g$.
\begin{theorem}[Proximal point algorithm]
Let $g\colon X\to (-\infty,+\infty]$ be a min-convex function. Suppose that $x^*\in\StrFix T_{\rm PPA}$. Denote $r :=r(x^*;T_{\rm PPA})\in(0,+\infty]$ and consider a sequence $(x_n)_{n\in\mathbb{N}}$ given by \eqref{eq:PPA} with $x_0\in\inte\mathbb{B}(x^*;r)$. Then $(x_n)_{n\in\mathbb{N}}$ converges to a local minimum of $g$.
\end{theorem}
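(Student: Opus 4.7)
The plan is to identify this as a direct application of the machinery built up in the preceding sections, with only one substantive thing to check beyond invoking the right corollary, namely that the limit is actually a local minimum of $g$ (not merely a fixed point of the proximity operator).

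First, I would observe that since $g$ is min-convex, Proposition~\ref{prop:proximity of min}\ref{it:prox f_cvx} tells us that $T_{\rm PPA} = \prox_{\gamma g}$ is union $1/2$-averaged nonexpansive, with the canonical representation
\begin{equation*}
\prox_{\gamma g}(x) = \{\prox_{\gamma g_i}(x) : i\in \varphi(x)\}, \qquad \varphi(x) = \{i\in I : {}^\gamma\!g(x) = {}^\gamma\!g_i(x)\},
\end{equation*}
where $g = \min_{i\in I} g_i$ is a min-convex representation. Since we are told $x^*\in\StrFix T_{\rm PPA}$, and the radius of attraction $r = r(x^*; T_{\rm PPA})$ is strictly positive by Proposition~\ref{prop:prop2.1 Tam17}, the hypotheses of Corollary~\ref{cor:union averaged} are met (taking $\alpha = 1/2$ and $\lambda_n = 1$, so trivially $\liminf \lambda_n(1/\alpha - \lambda_n) = 1 > 0$).

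Applying Corollary~\ref{cor:union averaged} with $x_0 \in \inte\mathbb{B}(x^*;r)$ immediately yields that $(x_n)_{n\in\mathbb{N}}$ converges to some $\overline{x} \in \Fix T_{\rm PPA} \cap \mathbb{B}(x^*;r)$. It remains only to upgrade this to the statement that $\overline{x}$ is a local minimum of $g$. This is precisely the content of Proposition~\ref{prop:proximity of min}\ref{it:Fix prox f_cvx}: every fixed point of the proximity operator of a min-convex function is a local minimum of that function. Applying this to $\overline{x}\in\Fix\prox_{\gamma g}$ completes the proof.

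I do not anticipate any real obstacle here; the theorem is essentially an assembly of Proposition~\ref{prop:proximity of min} (parts~\ref{it:prox f_cvx} and~\ref{it:Fix prox f_cvx}) with Corollary~\ref{cor:union averaged}. The only mild care needed is to make sure the parameters match: the corollary asks for $\lambda_n \in (0, 1/\alpha]$ with $\liminf \lambda_n(1/\alpha - \lambda_n) > 0$, and with $\alpha = 1/2$ the admissible range is $(0,2]$, so the constant choice $\lambda_n \equiv 1$ sits comfortably inside. Everything else is bookkeeping.
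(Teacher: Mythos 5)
Your proof is correct and follows essentially the same route as the paper: Proposition~\ref{prop:proximity of min}\ref{it:prox f_cvx} to get union $1/2$-averaged nonexpansiveness of $\prox_{\gamma g}$, Corollary~\ref{cor:union averaged} with $\lambda_n\equiv 1$ for convergence to a fixed point, and Proposition~\ref{prop:proximity of min}\ref{it:Fix prox f_cvx} to identify that fixed point as a local minimum of $g$. The parameter check $\liminf_{n\to\infty}\lambda_n(1/\alpha-\lambda_n)=1>0$ is also exactly the observation needed.
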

\begin{proof}
By Proposition~\ref{prop:proximity of min}\ref{it:prox f_cvx}, the operator $T_{\rm PPA}$ is union $1/2$-averaged nonexpansive. Consequently, convergence of the sequence $(x_n)_{n\in\mathbb{N}}$ to a point in $\Fix T_{\rm PPA}$ then follows from Corollary~\ref{cor:union averaged} (with $\lambda_n =1$). 
The fact that every fixed point of $T_{\rm PPA}$ is a local minimum of $g$ follows from Proposition~\ref{prop:proximity of min}\ref{it:Fix prox f_cvx}.
\end{proof}

\subsection{Forward-backward splitting}
In this section, we consider the minimization problem
\begin{equation}\label{eq:min f+g fbs}
\min_{x\in X}\{f(x)+g(x)\}
\end{equation}
where $f\colon X\to\mathbb{R}$ is a convex function with $L$-Lipschitz continuous gradient $\nabla f$, and $g :=\min_{i\in I} g_i\colon X\to (-\infty,+\infty]$ is a min-convex function.

Given $x_0\in X$, the \emph{forward-backward algorithm} for \eqref{eq:min f+g fbs} is given by fixed point iteration
\begin{equation}\label{eq:FB sequence}
\forall n\in \mathbb{N},\quad x_{n+1}\in (1-\lambda_n)x_n+\lambda_n T_{\rm FB}(x_n)\text{~~with~~}T_{\rm FB} :=\prox_{\gamma g}(\Id-\gamma \nabla f),
\end{equation} 
where $\gamma \in (0,2/L)$ and $(\lambda_n)_{n\in\mathbb{N}}\subseteq (0,(4-\gamma L)/2]$. 

\begin{remark}
In the special case when $g_i$ are indicator functions to convex sets, the proximity operator $\prox_{\gamma g}$ reduces to a projection operators, and the corresponding algorithm is sometimes called the \emph{projected gradient algorithm}. A specific example of which was studied in \cite{Tam17} arising is sparsity constrained minimization.
\end{remark}

To begin, we study some properties of the forward-backward operator.
\begin{proposition}[Properties of $T_{\rm FB}$]\label{prop:properties of FB}
Let $f\colon X\to\mathbb{R}$ be a convex function with $L$-Lipscthiz continuous gradient, $g :=\min_{i\in I} g_i\colon X\to (-\infty,+\infty]$ be a min-convex function, and $\gamma\in(0,2/L)$. Then the following assertions hold.
\begin{enumerate}[label =(\alph*)]
\item\label{it:FBS averaged} The forward-backward splitting operator, $T_{\rm FB}$, is union $2/(4-\gamma L)$-averaged nonexpansive.

\item\label{it:FBS Fix critical} If $x\in\Fix T_{\rm FB}$, then there exists an $i\in I$ such that $0\in \left(\nabla f(x)+\partial_p g(x)\right)\cap\left(\nabla f(x)+\partial g_i(x)\right) $.

\item\label{it:FBS Fix nec}  $x\in \Fix T_{\rm FB}$ if and only if there exists an $i\in I$ such that $x\in \argmin\{f+g_i\}$ and ${}^\gamma\! g(x-\gamma\nabla f(x)) ={}^\gamma\! g_i(x-\gamma\nabla f(x))$, in which case $g(x) =g_i(x)$.

\item\label{it:FBS sFix nec} $x\in \StrFix T_{\rm FB}$ if and only if $x\in \argmin\{f+g_i\}$ for all $i\in I$ such that ${}^\gamma\! g(x-\gamma\nabla f(x)) ={}^\gamma\! g_i(x-\gamma\nabla f(x))$, in which case $g(x) =g_i(x)$.

\item\label{it:FBS sFix local min} $\StrFix T_{\rm FB} \subseteq \{x\in X:x\text{ is a local minimum of }f+g\}\cap \Fix T_{\rm FB}$.	

\item\label{it:FBS Fix local min} If $x^*\in\StrFix T_{\rm FB}$, then there exists a $\delta>0$ such that $$\Fix T_{\rm FB}\cap\mathbb{B}(x^*;\delta)\subseteq \{x\in X:x\text{ is a local minimum of }f+g\}.$$
\end{enumerate}	
\end{proposition}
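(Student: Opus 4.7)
\emph{Part (a).} My plan is to decompose $T_{\rm FB} = \prox_{\gamma g} \circ (\Id-\gamma \nabla f)$ and treat each factor separately. The Baillon--Haddad theorem yields $1/L$-cocoercivity of $\nabla f$, whence $\Id - \gamma \nabla f$ is single-valued and $\gamma L/2$-averaged nonexpansive for $\gamma \in (0, 2/L)$, hence trivially union $\gamma L/2$-averaged nonexpansive via a singleton active selector. By Proposition~\ref{prop:proximity of min}\ref{it:prox f_cvx}, $\prox_{\gamma g}$ is union $1/2$-averaged nonexpansive. Plugging $\alpha_1 = \gamma L/2$ and $\alpha_2 = 1/2$ into the composition formula of Proposition~\ref{prop:closedness}\ref{it:UAN composition} gives the averaging constant $2/(4-\gamma L)$ after routine algebra.

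\emph{Parts (b), (c) and (d).} I would first unwind the definition using Proposition~\ref{prop:proximity of min}\ref{it:prox f} to record
\[
T_{\rm FB}(x) = \{\prox_{\gamma g_i}(x-\gamma \nabla f(x)) : i \in \varphi(x)\},\quad \varphi(x) := \{i \in I : {}^\gamma\! g(x-\gamma \nabla f(x)) = {}^\gamma\! g_i(x-\gamma \nabla f(x))\}.
\]
Then (b) follows by applying Proposition~\ref{prop:basic}\ref{it:necessary} to the membership $x \in \prox_{\gamma g}(x-\gamma \nabla f(x))$, which gives $-\nabla f(x) \in \partial_p g(x)$, and then to the witnessing index $i \in \varphi(x)$ realising $\prox_{\gamma g_i}(x - \gamma \nabla f(x)) = x$, which via convexity of $g_i$ yields $-\nabla f(x) \in \partial g_i(x)$. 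For (c), the same witnessing $i$ satisfies $x \in \argmin\{f+g_i\}$ by the classical convex forward-backward optimality condition; the equality $g_i(x) = g(x)$ is then read off by evaluating ${}^\gamma\! g_i(x-\gamma \nabla f(x)) = g_i(x) + \frac{\gamma}{2}\|\nabla f(x)\|^2$ at the minimizer $x$ and comparing with ${}^\gamma\! g(x-\gamma \nabla f(x)) \leq g(x) + \frac{\gamma}{2}\|\nabla f(x)\|^2$. The converse direction retraces these implications using Proposition~\ref{prop:proximity of min}\ref{it:prox f}. Part (d) is obtained by applying (c) uniformly over $i \in \varphi(x)$, since $x \in \StrFix T_{\rm FB}$ forces each element of the representation to coincide with $x$.

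\emph{Parts (e) and (f).} The unifying observation is that $f+g = \min_{i \in I}(f + g_i)$ is itself min-convex (each $f+g_i$ being proper, lsc, and convex), so by Proposition~\ref{prop:proximity of min}\ref{it:local min}, $x$ is a local minimum of $f+g$ if and only if $x \in \argmin\{f + g_j\}$ for every $j$ with $g_j(x) = g(x)$. The main obstacle is a short lemma: for any $x \in \Fix T_{\rm FB}$, one has $\{j : g_j(x) = g(x)\} \subseteq \varphi(x)$. To prove it, I would use the witness $i$ from (c) to get ${}^\gamma\! g(x-\gamma \nabla f(x)) = g(x) + \frac{\gamma}{2}\|\nabla f(x)\|^2$; then for any $j$ with $g_j(x) = g(x)$, the bound ${}^\gamma\! g_j(x-\gamma \nabla f(x)) \leq g_j(x) + \frac{\gamma}{2}\|\nabla f(x)\|^2$ combined with the monotonicity ${}^\gamma\! g_j \geq {}^\gamma\! g$ forces equality, so $y=x$ attains the infimum defining $\prox_{\gamma g_j}(x-\gamma \nabla f(x))$, giving $x \in \argmin\{f+g_j\}$. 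For (e), combining this lemma with (d) settles the characterization via Proposition~\ref{prop:proximity of min}\ref{it:local min}. For (f), I would first invoke Proposition~\ref{prop:prop2.1 Tam17} to pick $\delta > 0$ such that $\varphi(x) \subseteq \varphi(x^*)$ on $\mathbb{B}(x^*;\delta)$, and then apply the lemma to every $x \in \Fix T_{\rm FB} \cap \mathbb{B}(x^*;\delta)$ to conclude local minimality.
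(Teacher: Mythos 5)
Your proposal is correct, and for parts \ref{it:FBS averaged}--\ref{it:FBS sFix local min} it follows essentially the same route as the paper: the same factorization and the same invocation of Propositions~\ref{prop:proximity of min}, \ref{prop:basic} and \ref{prop:closedness}\ref{it:UAN composition} for \ref{it:FBS averaged}--\ref{it:FBS Fix critical}, the same reduction of $x=\prox_{\gamma g_i}(x-\gamma\nabla f(x))$ to $x\in\argmin\{f+g_i\}$ for \ref{it:FBS Fix nec}--\ref{it:FBS sFix nec}, and (for \ref{it:FBS sFix local min}) the same key implication ``$g(x)=g_j(x)\Rightarrow{}^\gamma\! g(y)={}^\gamma\! g_j(y)$'' feeding into Proposition~\ref{prop:proximity of min}\ref{it:local min}. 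Where you genuinely diverge is part \ref{it:FBS Fix local min}. The paper localizes: it uses \ref{it:FBS sFix local min} to get a ball on which $x^*$ minimizes $f+g$, shrinks it via Proposition~\ref{prop:prop2.1 Tam17} so that $\phi(x)\subseteq\phi(x^*)$, and then shows any fixed point $\overline{x}$ in that ball attains the same value $(f+g)(\overline{x})=(f+g)(x^*)$, hence is itself a local minimum. Your lemma instead shows directly that for \emph{every} $x\in\Fix T_{\rm FB}$ and every $j$ with $g_j(x)=g(x)$ the chain ${}^\gamma\! g_j(y)\leq g_j(x)+\tfrac{1}{2\gamma}\|y-x\|^2={}^\gamma\! g(y)\leq{}^\gamma\! g_j(y)$ collapses, so that $x\in\prox_{\gamma g_j}(y)$ and hence $x\in\argmin\{f+g_j\}$; by Proposition~\ref{prop:proximity of min}\ref{it:local min} this already yields the stronger, unlocalized inclusion $\Fix T_{\rm FB}\subseteq\{x:x\text{ is a local minimum of }f+g\}$ (consistent with the paper's own Proposition~\ref{prop:proximity of min}\ref{it:Fix prox f_cvx} in the case $f=0$, and derivable alternatively from \ref{it:FBS Fix critical} together with \eqref{eq:subdiff-min}). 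Consequently your appeal to Proposition~\ref{prop:prop2.1 Tam17} and the radius $\delta$ in \ref{it:FBS Fix local min} is superfluous --- the lemma does all the work --- whereas the paper's value-comparison argument needs the localization but avoids re-entering the prox inequalities. One caveat: for \ref{it:FBS Fix local min} the set inclusion $\{j:g_j(x)=g(x)\}\subseteq\varphi(x)$ alone would not suffice (part \ref{it:FBS Fix nec} only certifies $\argmin$ membership for a single witnessing index), so you do need the full strength of your lemma's proof, namely the conclusion $x\in\argmin\{f+g_j\}$ for every such $j$; your write-up does record this, so the argument is complete.
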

\begin{proof}
\ref{it:FBS averaged}: First, by \cite[Proposition~4.39 and Theorem~18.15(i)\&(v)]{BauCom17}, $\Id-\gamma\nabla f$ is $(\gamma L)/2$-averaged nonexpansive and, by Proposition~\ref{prop:proximity of min}\ref{it:prox f_cvx}, $\prox_{\gamma g}$ is union $1/2$-averaged nonexpansive, in particular, 
\begin{equation*}
\prox_{\gamma g}(x) =\{ \prox_{\gamma g_i}(x): i\in\varphi(x)\}, 
\end{equation*}
where $\varphi(x) :=\{i\in I:{}^\gamma\! g_i(x) ={}^\gamma\! g(x)\}$. Applying Proposition~\ref{prop:closedness}\ref{it:UAN composition} it follows that $T_{\rm FB}$ is union $2/(4-\gamma L)$-averaged nonexpansive with 
\begin{equation}
T_{\rm FB}(x) =\{ \prox_{\gamma g_i}(x-\gamma\nabla f(x)): i\in \phi(x\},
\end{equation}
where $\phi :=\varphi\circ(\Id-\gamma\nabla f)$ is osc.

\ref{it:FBS Fix critical}: Let $x\in\Fix T_{\rm FB}$. Then $x\in \prox_{\gamma g}(x-\gamma\nabla f(x))$. Applying Proposition~\ref{prop:basic}\ref{it:necessary} to $\prox_{\gamma g}$ gives $\frac{1}{\gamma}(x-\gamma\nabla f(x)-x) \in \partial_p g(x)$ which implies that $0\in \nabla f(x)+\partial_p g(x)$. Further, Proposition~\ref{prop:proximity of min}\ref{it:prox f_cvx} implies that there exists an $i\in I$ such that $x= \prox_{\gamma g_i}(x-\gamma\nabla f(x))$ and hence $0\in \nabla f(x)+\partial g_i(x)$.

\ref{it:FBS Fix nec}: Let $x\in X$ and set $y :=x-\gamma \nabla f(x)$. Then $x\in \Fix T_{\rm FB}$ if and only if there exists an $i\in I$ such that ${}^\gamma\! g(y) ={}^\gamma\! g_i(y)$ and $x =\prox_{\gamma g_i}(y)$. Since $f,g_i$ are proper, lsc and convex and $f$ has full domain, the latter is equivalent to $x\in\argmin\{f+g_i\}$ \cite[Corollary~27.3(i)\&(viii)]{BauCom17}, thus establishing the claimed equivalence.

To prove the second claim, we note that $x =\prox_{\gamma g_i}(y)\in \prox_{\gamma g}(y)$ and ${}^\gamma\! g(y) ={}^\gamma\! g_i(y)$ implies
\begin{equation*}
g(x) ={}^\gamma\! g(y) -\frac{1}{2\gamma}\|y-x\|^2 
 ={}^\gamma\! g_i(y)  -\frac{1}{2\gamma}\|y-x\|^2 
 =g_i(x),
\end{equation*}
as was claimed.

\ref{it:FBS sFix nec}: Let $x\in X$ and set $y :=x-\gamma \nabla f(x)$. Then $x\in \StrFix T_{\rm FB}$ if and only if $x =\prox_{\gamma g_i}(y)$ for all $i\in I$ such that ${}^\gamma\! g(y) ={}^\gamma\! g_i(y)$. The rest is analogous to that of \ref{it:FBS Fix nec}.

\ref{it:FBS sFix local min}: We always have $\StrFix T_{\rm FB}\subseteq \Fix T_{\rm FB}$. Let $x\in \StrFix T_{\rm FB}$ and set $y :=x-\gamma \nabla f(x)$. It follows from \ref{it:FBS sFix nec} that 
\begin{equation}
\label{eq:FBS SFix}
\forall i\in I,\quad {}^\gamma\! g(y) ={}^\gamma\! g_i(y) \implies x\in \argmin\{f+g_i\}.
\end{equation}
We shall prove that $x$ is a local minimum of $f+g$. Following Proposition~\ref{prop:proximity of min}\ref{it:local min} and \eqref{eq:FBS SFix}, it suffices to show that
\begin{equation*}
\forall i\in I,\quad g(x) =g_i(x) \implies {}^\gamma\! g(y) ={}^\gamma\! g_i(y).
\end{equation*}
Indeed, let $i\in I$ be arbitrary such that $g(x) =g_i(x)$. Since $x\in\prox_{\gamma g}(y)$, using Proposition~\ref{prop:proximity of min}\ref{it:env f}, we have
\begin{equation*}
{}^\gamma\! g_i(y)\geq {}^\gamma\! g(y) =g(x) +\frac{1}{2\gamma}\|x-y\|^2
 =g_i(x) +\frac{1}{2\gamma}\|x-y\|^2 
\geq {}^\gamma\! g_i(y), 
\end{equation*}
which implies that ${}^\gamma\! g(y) ={}^\gamma\! g_i(y)$. 

\ref{it:FBS Fix local min}:~Let $x^*\in\StrFix T_{\rm FB}$. Using \ref{it:FBS sFix local min}, there exists $\delta_1>0$ such that $x^*$ is a minimum of $f+g$ on $\mathbb{B}(x^*;\delta_1)$. By Proposition~\ref{prop:prop2.1 Tam17}, there exists $\delta_2>0$ such that 
\begin{equation*}
\forall x\in \mathbb{B}(x^*;\delta_2),\quad \phi(x)\subseteq \phi(x^*).
\end{equation*}
Now take $\delta \in (0,\min\{\delta_1, \delta_2\})$ and let $\overline{x}\in \Fix T_{\rm FB}\cap \mathbb{B}(x^*;\delta)$. Then, by \ref{it:FBS Fix nec}, there exists $i\in \phi(\overline{x})$ such that $\overline{x}\in \argmin\{f+g_i\}$ and $g(\overline{x}) =g_i(\overline{x})$. As $x^*\in\StrFix T_{\rm FB}$ and $i\in \phi(\overline{x})\subseteq \phi(x^*)$, we derive from \ref{it:FBS sFix nec} that $x^*\in \argmin\{f+g_i\}$ and $g(x^*) =g_i(x^*)$. Therefore, 
\begin{equation*}
f(\overline{x})+g(\overline{x}) =f(\overline{x})+g_i(\overline{x}) =f(x^*)+g_i(x^*) =f(x^*)+g(x^*).
\end{equation*} 
Since $\overline{x}\in B(x^*;\delta)\subset\inte\mathbb{B}(x^*;\delta_1)$ with the same value as $f+g$ at $x^*$, $\overline{x}$ is also a local minimum of $f+g$.
\end{proof}

Our main results regarding convergence of the forward-backward method can now be stated as follows.
\begin{theorem}[Forward-backward splitting]
Let $f\colon X\to\mathbb{R}$ be a convex function with $L$-Lipscthiz continuous gradient, $g\colon X\to (-\infty,+\infty]$ be a min-convex function, and $\gamma\in(0,2/L)$. 
Suppose that $x^*\in\StrFix T_{\rm FB}$ and let $(\lambda_n)_{n\in\mathbb{N}}$ be a sequence in $\left(0,\frac{4-\gamma L}{2}\right]$ with $\liminf_{n\to\infty}\lambda_n\left(\frac{4-\gamma L}{2}-\lambda_n\right)>0$. Denote $r :=r(x^*;T_{\rm FB})\in(0,+\infty]$ and consider a sequence $(x_n)_{n\in\mathbb{N}}$ given by \eqref{eq:FB sequence} with $x_0\in\inte\mathbb{B}(x^*;r)$. Then $(x_n)_{n\in\mathbb{R}}$ converges to a point in $\overline{x}\in\Fix T_{\rm FB}\cap\mathbb{B}(x^*;r)$. Furthermore, for sufficiently small $r>0$, the limit point $\overline{x}$ is a local minimum of $f+g$.
\end{theorem}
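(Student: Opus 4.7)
The plan is to assemble the theorem from the structural results already established. By Proposition~\ref{prop:properties of FB}\ref{it:FBS averaged}, the operator $T_{\rm FB}$ is union $\alpha$-averaged nonexpansive with $\alpha :=2/(4-\gamma L)$, so that $1/\alpha =(4-\gamma L)/2$. Under this identification, the stepsize hypothesis $(\lambda_n)_{n\in\mathbb{N}}\subseteq \left(0,(4-\gamma L)/2\right]$ with $\liminf_{n\to\infty}\lambda_n\left((4-\gamma L)/2-\lambda_n\right)>0$ is exactly the condition $(\lambda_n)\subseteq (0,1/\alpha]$ with $\liminf_{n\to\infty}\lambda_n(1/\alpha-\lambda_n)>0$ required by Corollary~\ref{cor:union averaged}.

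Next I would invoke Corollary~\ref{cor:union averaged} directly with the fixed strong fixed point $x^*$ and radius $r :=r(x^*;T_{\rm FB})\in (0,+\infty]$. Because the iteration \eqref{eq:FB sequence} is precisely the Krasnosel'ski\u{\i}--Mann iteration of $T_{\rm FB}$, and $x_0\in \inte\mathbb{B}(x^*;r)$ by hypothesis, the corollary yields convergence of $(x_n)_{n\in\mathbb{N}}$ to some $\overline{x}\in\Fix T_{\rm FB}\cap \mathbb{B}(x^*;r)$, which settles the first assertion.

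For the second assertion, Proposition~\ref{prop:properties of FB}\ref{it:FBS Fix local min} supplies a $\delta>0$ such that $\Fix T_{\rm FB}\cap \mathbb{B}(x^*;\delta)$ consists entirely of local minima of $f+g$. Choosing $r$ small enough that $r\leq \min\{\delta, r(x^*;T_{\rm FB})\}$ then ensures that $\overline{x}\in \Fix T_{\rm FB}\cap \mathbb{B}(x^*;\delta)$, so $\overline{x}$ is a local minimum of $f+g$.

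There is essentially no obstacle here beyond matching notation: the theorem is a direct compilation of the union averaged nonexpansive convergence theory (Corollary~\ref{cor:union averaged}) with the structural characterization of fixed points of $T_{\rm FB}$ as local minima (Proposition~\ref{prop:properties of FB}\ref{it:FBS Fix local min}), both of which are already available. The only subtlety is that the radius guaranteeing convergence and the radius guaranteeing that fixed points are local minima are not \emph{a priori} equal, which is why the second conclusion must be stated for ``sufficiently small $r$''; shrinking to the minimum of the two radii resolves this without difficulty.
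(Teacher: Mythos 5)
Your proposal is correct and follows exactly the paper's own argument: combine Proposition~\ref{prop:properties of FB}\ref{it:FBS averaged} with Corollary~\ref{cor:union averaged} (noting $1/\alpha=(4-\gamma L)/2$) for convergence, then invoke Proposition~\ref{prop:properties of FB}\ref{it:FBS Fix local min} for the local minimality of the limit when $r$ is sufficiently small. Your remark about reconciling the two radii by shrinking to their minimum is the same ``sufficiently small $r$'' device the paper uses implicitly.
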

\begin{proof}
Convergence of the sequence $(x_n)_{n\in\mathbb{N}}$ to a point $\overline{x}\in\Fix T_{\rm FB}\cap\mathbb{B}(x^*;r)$ follows by combining Proposition~\ref{prop:properties of FB}\ref{it:FBS averaged} and Corollary~\ref{cor:union averaged}. For sufficiently small $r>0$, the fact that $\overline{x}$ is a local minimum of $f+g$ follows from Proposition~\ref{prop:properties of FB}\ref{it:FBS Fix local min}.
\end{proof}

In the proof of the previous theorem, we used an inclusion relating fixed points and local minima in Proposition~\ref{prop:properties of FB}\ref{it:FBS Fix local min}. To conclude our study of the forward-backward method, we show that osc representability gives the reverse inclusion. 
\begin{proposition}[Local minima are fixed points]
Let $f\colon X\to\mathbb{R}$ be convex with $L$-Lipscthiz continuous gradient, let $g :=\min_{i\in I} g_i\colon X\to (-\infty,+\infty]$ be min-convex and osc representable at $\overline{x}\in X$. Suppose that $\inf(f+g)(X)>-\infty$ and that $\overline{x}$ is a local minimum of $f+g$. There exist constants $\overline{\gamma}, \delta>0$ such that, for each $x\in\mathbb{B}(\overline{x};\delta)$ and $\gamma\in(0,\overline{\gamma}]$, there exists an index $i\in I$ satisfying:
\begin{enumerate}[label =(\alph*)]	
\item $\overline{x}\in\argmin\{f+g_i\}$,
\item ${}^\gamma\! g(x-\gamma\nabla f(x)) ={}^\gamma\! g_i(x-\gamma\nabla f(x))$, and 
\item $\prox_{\gamma g_i}(x-\gamma\nabla f(x))\in\prox_{\gamma g}(x-\gamma\nabla f(x))$.		
\end{enumerate}
Consequently, $\overline{x}\in\Fix T_{\rm FB}$ for all $\gamma\in(0,\overline{\gamma}]$.
\end{proposition}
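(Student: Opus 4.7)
The plan is as follows. Since $f$ is finite-valued, $f+g=\min_{i\in I}(f+g_i)$ is a min-convex representation with each $f+g_i$ proper, lsc and convex, so Proposition~\ref{prop:proximity of min}\ref{it:local min} applied at the local minimum $\overline{x}$ yields $\overline{x}\in\argmin\{f+g_i\}$ for every $i$ in $\phi(\overline{x}) := \{i\in I : g(\overline{x})=g_i(\overline{x})\}$; this already secures condition~(a) for any such $i$. Writing $\varphi_\gamma(y) := \{i\in I : {}^\gamma\! g(y)={}^\gamma\! g_i(y)\}$ for the active selector of $\prox_{\gamma g}$ from Proposition~\ref{prop:proximity of min}\ref{it:prox f_cvx}, the whole statement reduces to producing $\overline{\gamma},\delta>0$ such that
\[
\varphi_\gamma(x-\gamma\nabla f(x))\cap\phi(\overline{x}) \neq \varnothing \quad\text{for all } (x,\gamma)\in\mathbb{B}(\overline{x};\delta)\times(0,\overline{\gamma}],
\]
because any index in this intersection satisfies (a) by the above, (b) by the definition of $\varphi_\gamma$, and (c) by Proposition~\ref{prop:proximity of min}\ref{it:prox f}.

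I would prove this intersection statement by contradiction. Negating it produces $\gamma_n\downarrow 0$, $x_n\to\overline{x}$, and, after passing to a subsequence using the finiteness of $I$, a fixed index $i^*\notin\phi(\overline{x})$ with $i^*\in\varphi_{\gamma_n}(y_n)$, where $y_n := x_n-\gamma_n\nabla f(x_n)$. Setting $z_n := \prox_{\gamma_n g_{i^*}}(y_n)$, Proposition~\ref{prop:proximity of min}\ref{it:prox f} forces $z_n\in\prox_{\gamma_n g}(y_n)$; evaluating the equality ${}^{\gamma_n}\!g_{i^*}(y_n)={}^{\gamma_n}\!g(y_n)$ at the common minimizer $z_n$ then yields $g_{i^*}(z_n)=g(z_n)$, i.e.\ $i^*\in\phi(z_n)$. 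Thus, once the convergence $z_n\to\overline{x}$ is established, outer semicontinuity of $\phi$ at $\overline{x}$ (the osc-representability hypothesis) forces $i^*\in\phi(\overline{x})$, contradicting the choice of $i^*$. For the convergence, I would combine the defining inequality of $z_n$ with the upper bound ${}^{\gamma_n}\!g_{i^*}(y_n)\leq{}^{\gamma_n}\!g_j(y_n)\leq g(\overline{x})+\tfrac{1}{2\gamma_n}\|\overline{x}-y_n\|^2$, valid for any $j\in\phi(\overline{x})$ by using $z=\overline{x}$ as a test point in ${}^{\gamma_n}\!g_j$. Invoking the global lower bound $m:=\inf(f+g)(X)>-\infty$---so that $g_{i^*}(z_n)\geq m-f(z_n)$---together with the descent inequality $f(z_n)\leq f(\overline{x})+\langle\nabla f(\overline{x}),z_n-\overline{x}\rangle+\tfrac{L}{2}\|z_n-\overline{x}\|^2$ and expanding $\|z_n-y_n\|^2$ around $\overline{x}$, regrouping leads to a quadratic inequality of the form
\[
\tfrac12\bigl(\tfrac{1}{\gamma_n}-L\bigr)u_n^2 - \Bigl(\tfrac{\|\overline{x}-y_n\|}{\gamma_n}+\|\nabla f(\overline{x})\|\Bigr)u_n \leq (f+g)(\overline{x})-m
\]
in $u_n := \|z_n-\overline{x}\|$, from which direct analysis of the positive root yields $u_n\to 0$ as $\gamma_n\downarrow 0$ and $\|x_n-\overline{x}\|\to 0$.

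Finally, specializing the main claim to $x=\overline{x}$ produces, for every $\gamma\in(0,\overline{\gamma}]$, an index $i\in\phi(\overline{x})$ satisfying (a); hence $-\nabla f(\overline{x})\in\partial g_i(\overline{x})$, equivalently $\overline{x}=\prox_{\gamma g_i}(\overline{x}-\gamma\nabla f(\overline{x}))$, and combining with (c) puts $\overline{x}\in\prox_{\gamma g}(\overline{x}-\gamma\nabla f(\overline{x}))=T_{\rm FB}(\overline{x})$, yielding $\overline{x}\in\Fix T_{\rm FB}$ as claimed. The main obstacle is the uniformity hidden in the quadratic estimate: because $\delta$ and $\overline{\gamma}$ are independent positive constants, $\|\overline{x}-y_n\|/\gamma_n$ may be unbounded along a contradictory sequence, so one has to rely on the dominance of the $(1/\gamma_n-L)u_n^2$ term rather than trying to bound each error term separately---this is precisely what forces the use of the global lower bound $\inf(f+g)(X)>-\infty$ and of the descent inequality for $f$ in the argument.
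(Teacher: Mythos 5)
Your proposal is correct and follows essentially the same route as the paper's proof: condition (a) comes from Proposition~\ref{prop:proximity of min}\ref{it:local min} applied to $f+g=\min_{i\in I}(f+g_i)$, the prox points are localized near $\overline{x}$ by the same quadratic estimate built from $\inf(f+g)(X)>-\infty$ together with the descent lemma for $f$, and outer semicontinuity of the selector $\phi(x)=\{i\in I: g(x)=g_i(x)\}$ transfers the active index back to $\overline{x}$. The only difference is presentational: the paper fixes explicit constants ($\delta$ from osc of $\phi$ and $\overline{\gamma}<\tfrac{1}{2L}$ with $2\overline{\gamma}\left((f+g)(\overline{x})-\inf(f+g)(X)\right)<\delta^2$) and shows directly that every $z\in\prox_{\gamma g}(x-\gamma\nabla f(x))$ lies in $\mathbb{B}(\overline{x};3\delta)$, which sidesteps the unboundedness of $\|\overline{x}-y_n\|/\gamma_n$ that you correctly identify and handle in your contradiction argument via the dominance of the $\tfrac12(1/\gamma_n-L)u_n^2$ term.
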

\begin{proof}
Assume that $\overline{x}$ is a local minimum of $f+g$ and consider the selector $\phi\colon X\setto I$ defined by 
\begin{equation*}
\phi(x) :=\{i\in I: g(x) =g_i(x)\}.
\end{equation*}
Then, by Proposition~\ref{prop:proximity of min}\ref{it:local min}, $\overline{x}\in \argmin\{f+g_i\}$ for all $i\in \phi(\overline{x})$. Since $\phi$ is osc at $\overline{x}$ by assumption, Proposition~\ref{prop:prop2.1 Tam17} implies that there exists $\delta >0$ such that 
\begin{equation}
\label{eq:phi(x)}
\forall x\in \mathbb{B}(\overline{x};3\delta),\quad \phi(x)\subseteq \phi(\overline{x}).
\end{equation}
Fix a constant $\overline{\gamma}\in(0,\frac{1}{2L})$ satisfying
\begin{equation}\label{eq:gamma f+g ub}
2\overline{\gamma}\left((f+g)(\overline{x})-\inf(f+g)(X)\right) < \delta^2.
\end{equation}
Let $\gamma\in(0,\overline{\gamma}]$, $x\in\mathbb{B}(\overline{x};\delta)$ and $z\in\prox_{\gamma g}(x-\gamma\nabla f(x))$.
Then 
\begin{equation*}
g(\overline{x}) +\frac{1}{2\gamma}\|x-\gamma \nabla f(x)-\overline{x}\|^2 
\geq {}^\gamma\! g(x-\gamma \nabla f(x)) 
 =g(z) +\frac{1}{2\gamma}\|x-\gamma \nabla f(x)-z\|^2.
\end{equation*} 
Since $f$ is convex with $L$-Lipschitz continuous gradient, \cite[Proposition~17.7(ii) and Theorem~18.15(iii)]{BauCom17} implies that
$$ f(z) \leq f(\overline{x}) +\langle \nabla f(x),z-\overline{x}\rangle +\frac{L}{2}\|z-x\|^2. $$
Combining the previous two equations gives
\begin{align*}
2\gamma\left( g(\overline{x})-g(z)\right)
&\geq \|x-\gamma \nabla f(x)-z\|^2-\|x-\gamma \nabla f(x)-\overline{x}\|^2 \\
&= \|x-z\|^2  -\|x-\overline{x}\|^2 +2\gamma\langle \nabla f(x),z-\overline{x}\rangle \\
&\geq \|x-z\|^2  -\|x-\overline{x}\|^2 +2\gamma\left( f(z)-f(\overline{x})-\frac{L}{2}\|z-x\|^2\right),
\end{align*}
which together with \eqref{eq:gamma f+g ub} yields
\begin{equation*}
\delta^2 >2\gamma\left((f+g)(\overline{x})-\inf(f+g)(X)\right) \geq  (1-\gamma L)\|x-z\|^2  -\|x-\overline{x}\|^2 
\geq \frac{1}{2}\|x-z\|^2 -\delta^2. 
\end{equation*}
We obtain that $\|x-z\|\leq 2\delta$ and so $\|z-\overline{x}\|\leq \|z-x\| +\|x-\overline{x}\|\leq 3\delta$. Therefore, $z\in\mathbb{B}(\overline{x};3\delta)$ and, by \eqref{eq:phi(x)}, $\phi(z)\subseteq \phi(\overline{x})$. Take any $i\in \phi(z)$. On the one hand, $i\in \phi(\overline{x})$, and so $\overline{x}\in\argmin \{f+g_i\}$. On the other hand, $g(z) =g_i(z)$, which implies that
\begin{align*}
{}^\gamma\! g(x-\gamma\nabla f(x)) 
& =g(z) +\frac{1}{2\gamma}\|x-\gamma\nabla f(x)-z\|^2 \\
& =g_i(z) +\frac{1}{2\gamma}\|x-\gamma\nabla f(x)-z\|^2 \\
&\geq {}^\gamma\! g_i(x-\gamma\nabla f(x)) \geq {}^\gamma\! g(x-\gamma\nabla f(x)).
\end{align*} 
It follows that ${}^\gamma\! g(x-\gamma\nabla f(x)) ={}^\gamma\! g_i(x-\gamma\nabla f(x))$ and also $\prox_{\gamma g_i}(x-\gamma\nabla f(x))\in\prox_{\gamma g}(x-\gamma\nabla f(x))$.In particular, ${}^\gamma\! g(\overline{x}-\gamma\nabla f(\overline{x})) ={}^\gamma\! g_i(\overline{x}-\gamma\nabla f(\overline{x}))$ and hence, by appealing to Proposition~\ref{prop:properties of FB}\ref{it:FBS Fix nec}, we deduce that $\overline{x}\in \Fix T_{\rm FB}$.
\end{proof}

\subsection{Douglas--Rachford splitting}

In this section, we consider the minimization problem
\begin{equation}\label{eq:min f+g}
\min_{x\in X}\{f(x)+g(x)\},
\end{equation} 
where $f\colon X\to (-\infty,+\infty]$ and $g\colon X\to (-\infty,+\infty]$ are min-convex functions.

Given $x_0\in X$ and sequence $(\lambda_n)_{n\in\mathbb{N}}\subseteq (0,2]$, the \emph{Douglas--Rachford splitting algorithm} for \eqref{eq:min f+g} can be described as iteration
\begin{equation}\label{eq:DR splitting}
\forall n\in\mathbb{N},\quad \left\{\begin{array}{rcl}
y_n     &\in   & \prox_{\gamma f}(x_n), \\
z_n     &\in & \prox_{\gamma g}(2y_n-x_n), \\
x_{n+1} & =  & x_n+\lambda_n(z_n-y_n).
\end{array}\right.
\end{equation} 
In the case when $f$ and $g$ are convex, this is just the the usual Douglas--Rachford splitting algorithm for the sum convex functions. The iteration \eqref{eq:DR splitting} can be cast as a fixed point iteration in the sequence $(x_n)_{n\in\mathbb{N}}$. Precisely, it may be expressed as
\begin{equation*}
\forall n\in\mathbb{N},\quad x_{n+1} \in (1-\lambda_n)x_n+\lambda_n T_{\rm DR}(x_n),
\end{equation*}
where $T_{\rm DR}$ is the \emph{Douglas--Rachford splitting operator} defined by
\begin{subequations}\label{eq:DRS}
\begin{align}
T_{\rm DR}(x) & :=\frac{1}{2}\left(\Id+(2\prox_{\gamma g}-\Id)\circ(2\prox_{\gamma f}-\Id)\right)(x) \\
&\phantom{:} =\{x+z-y: y\in \prox_{\gamma f}(x), z\in \prox_{\gamma g}(2y-x)\}.
\end{align}
\end{subequations}
In the special case when the functions $f_j$ and $g_i$ are the indicator functions to convex sets, this reduces to the Douglas--Rachford projection algorithms considered in Section~\ref{ssec:projection algorithms}.

We begin by examining properties of the underlying operator $T_{\rm DR}$.
\begin{proposition}[Properties of $T_{\rm DR}$]\label{prop:DRS basic}
Let $f\colon X\to (-\infty,+\infty]$ and $g\colon X\to (-\infty,+\infty]$ be proper functions and let $\gamma >0$. Then the following assertions hold.
\begin{enumerate}[label =(\alph*)]
\item\label{it:DRS Fix} $x\in \Fix T_{\rm DR}$ if and only if there exists $y\in \prox_{\gamma f}(x)$ such that $y\in \prox_{\gamma g}(2y-x)$.
\item\label{it:DRS sFix} $x\in \StrFix T_{\rm DR}$ if and only if $\{y\} =\prox_{\gamma g}(2y-x)$ for all $y\in \prox_{\gamma f}(x)$.
\item\label{it:DRS Fix critical} If $x\in \Fix T_{\rm DR}$, then there exists $y\in \prox_{\gamma f}(x)$ such that $0\in \partial_p f(y) +\partial_p g(y) \subseteq \partial_p (f+g)(y)$. 
\item\label{it:DRS averaged} If $f$ and $g$ are min-convex, then $T_{\rm DR}$ is union $1/2$-averaged nonexpansive.
\end{enumerate}
\end{proposition}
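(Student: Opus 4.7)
The plan is to treat each part in turn, leveraging the explicit expression for $T_{\rm DR}$ in \eqref{eq:DRS} together with the tools already developed in Sections~\ref{s:union averaged operators} and~\ref{s:min-convex}. Parts \ref{it:DRS Fix} and \ref{it:DRS sFix} should be essentially immediate unpackings of the respective definitions of $\Fix$ and $\StrFix$: a point $x$ lies in $T_{\rm DR}(x)$ precisely when $x = x+z-y$ for some admissible pair, which forces $z=y$ with $y\in\prox_{\gamma f}(x)$ and $y\in\prox_{\gamma g}(2y-x)$, giving \ref{it:DRS Fix}; and $T_{\rm DR}(x)=\{x\}$ forces every admissible pair to satisfy $z=y$, and since the $y$'s range over $\prox_{\gamma f}(x)$, the condition becomes $\prox_{\gamma g}(2y-x)=\{y\}$ for every $y\in\prox_{\gamma f}(x)$, yielding \ref{it:DRS sFix}.

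For \ref{it:DRS Fix critical}, given $y\in\prox_{\gamma f}(x)$ with $y\in\prox_{\gamma g}(2y-x)$ obtained from \ref{it:DRS Fix}, I would apply the proximal-subdifferential characterization from Proposition~\ref{prop:basic}\ref{it:necessary} twice: once to $\prox_{\gamma f}$ at $x$, yielding $\tfrac{1}{\gamma}(x-y)\in\partial_p f(y)$, and once to $\prox_{\gamma g}$ at $2y-x$, yielding $\tfrac{1}{\gamma}((2y-x)-y) = \tfrac{1}{\gamma}(y-x)\in\partial_p g(y)$. Summing the two inclusions gives $0\in\partial_p f(y)+\partial_p g(y)$. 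The remaining inclusion $\partial_p f(y)+\partial_p g(y)\subseteq\partial_p(f+g)(y)$ is the standard additivity/sum rule for the proximal subdifferential, which follows directly from its defining quadratic inequality. The mildly delicate point here is to make sure the sum rule is invoked in the form that holds without qualification conditions (it does, as an inclusion).

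For \ref{it:DRS averaged}, the plan is a three-step closure argument using the results of Section~\ref{s:union averaged operators}. By Proposition~\ref{prop:proximity of min}\ref{it:prox f_cvx}, both $\prox_{\gamma f}$ and $\prox_{\gamma g}$ are union $1/2$-averaged nonexpansive. Applying Proposition~\ref{prop:averaged}\ref{it:prop averaged 2} (with $\alpha=1/2$) shows that the reflection operators $R_{\gamma f} := 2\prox_{\gamma f}-\Id$ and $R_{\gamma g} := 2\prox_{\gamma g}-\Id$ are union nonexpansive. Next, Proposition~\ref{prop:closedness}\ref{it:UAN composition} gives that $R_{\gamma g}\circ R_{\gamma f}$ is union nonexpansive. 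Finally, since $T_{\rm DR} = \tfrac{1}{2}\Id+\tfrac{1}{2}(R_{\gamma g}\circ R_{\gamma f})$, another application of Proposition~\ref{prop:averaged}\ref{it:prop averaged 2} yields that $T_{\rm DR}$ is union $1/2$-averaged nonexpansive.

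Overall, I expect \ref{it:DRS Fix}, \ref{it:DRS sFix}, and \ref{it:DRS averaged} to be routine given the machinery already in place; the only genuine obstacle is in \ref{it:DRS Fix critical}, namely invoking the proximal subdifferential sum rule correctly (as an inclusion, with no need for a constraint qualification) so that the step $\partial_p f(y)+\partial_p g(y)\subseteq\partial_p(f+g)(y)$ is justified without additional hypotheses on $f$ and $g$.
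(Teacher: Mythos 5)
Your proposal is correct and follows essentially the same route as the paper: parts \ref{it:DRS Fix} and \ref{it:DRS sFix} by unpacking \eqref{eq:DRS}, part \ref{it:DRS Fix critical} by two applications of Proposition~\ref{prop:basic}\ref{it:necessary} followed by the qualification-free sum-rule inclusion for the proximal subdifferential, and part \ref{it:DRS averaged} by the same three-step closure argument via Propositions~\ref{prop:proximity of min}\ref{it:prox f_cvx}, \ref{prop:averaged}\ref{it:prop averaged 2}, and \ref{prop:closedness}\ref{it:UAN composition}. Nothing further is needed.
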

\begin{proof}
\ref{it:DRS Fix} \& \ref{it:DRS sFix}: This follows from \eqref{eq:DRS}.

\ref{it:DRS Fix critical}: Let $x\in \Fix T_{\rm DR}$. Using \ref{it:DRS Fix}, there exists $y\in \prox_{\gamma f}(x)$ such that $y\in \prox_{\gamma g}(2y-x)$. From Proposition~\ref{prop:basic}\ref{it:necessary}, we deduce that
\begin{equation*}
\frac{1}{\gamma}(x-y) \in \partial_p f(y)\text{~~and~~}\frac{1}{\gamma}((2y-x)-y) =\frac{1}{\gamma}(y-x)\in\partial_p g(y).
\end{equation*}
Summing these expressions gives
$0 \in \partial_p f(y) +\partial_p g(y) \subseteq \partial_p (f+g)(y)$,
which proves the claim.

\ref{it:DRS averaged}: By Proposition~\ref{prop:proximity of min}\ref{it:prox f_cvx}, both $\prox_{\gamma f}$ and $\prox_{\gamma g}$ are union $1/2$-averaged nonexpansive. Applying Proposition~\ref{prop:averaged} with $\alpha =1/2$ implies that both $2\prox_{\gamma f}-\Id$ and $2\prox_{\gamma g}-\Id$ are union nonexpansive, and hence so is $(2\prox_{\gamma g}-\Id)\circ (2\prox_{\gamma f}-\Id)$ due to Proposition~\ref{prop:closedness}\ref{it:UAN composition}. Again applying Proposition~\ref{prop:averaged} with $\alpha =1/2$ completes the proof.  
\end{proof}

\begin{proposition}[Finer properties of $T_{\rm DR}$]\label{prop:properties of DRS}
Let $f\colon X\to \mathbb{R}$ be convex, $g :=\min_{i\in I} g_i\colon X\to (-\infty,+\infty]$ be min-convex function, and $\gamma>0$. Then the following assertions hold.	
\begin{enumerate}[label =(\alph*)]
\item\label{it:DRS Fix nec} $x\in\Fix T_{\rm DR}$ if and only if there exists an $i\in I$ such that $y :=\prox_{\gamma f}(x)\in \argmin\{f+g_i\}$ and ${}^\gamma\! g(2y-x) ={}^\gamma\! g_i(2y-x)$. Moreover any such point $y$ satisfies $g(y) =g_i(y)$.
\item\label{it:DRS sFix nec} $x\in\StrFix T_{\rm DR}$ if and only if $y :=\prox_{\gamma f}(x)\in \argmin\{f+g_i\}$ for all $i\in I$ such that ${}^\gamma\! g(2y-x) ={}^\gamma\! g_i(2y-x)$. Moreover, the point $y$ satisfies $g(y) =g_i(y)$.
\item\label{it:DRS sFix local min} If $x\in \StrFix T_{\rm DR}$, then $\prox_{\gamma f}(x)$ is a local minimum of $f+g$.  

\item\label{it:DRS Fix local min} If $x^*\in\StrFix T_{\rm DR}$, then there exists a $\delta>0$ such that 
$$\prox_{\gamma f}\left(  \Fix T_{\rm DR}\cap\mathbb{B}(x^*;\delta)\right)\subseteq\{x\in X:x\text{ is a local minimum of }f+g\}.$$
\end{enumerate}	
\end{proposition}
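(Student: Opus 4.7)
The plan is to treat the four assertions in order, leveraging the already-established characterizations of the basic Douglas--Rachford operator together with the min-convex prox structure. For \ref{it:DRS Fix nec}, I would start from Proposition~\ref{prop:DRS basic}\ref{it:DRS Fix}: with $y := \prox_{\gamma f}(x)$ (single-valued since $f$ is convex), $x\in\Fix T_{\rm DR}$ iff $y\in\prox_{\gamma g}(2y-x)$. By Proposition~\ref{prop:proximity of min}\ref{it:prox f}, this is equivalent to the existence of $i\in I$ with ${}^\gamma\! g(2y-x) = {}^\gamma\! g_i(2y-x)$ and $y = \prox_{\gamma g_i}(2y-x)$. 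The optimality statements in Proposition~\ref{prop:basic}\ref{it:necessary} then translate the two prox equations into $\frac{1}{\gamma}(x-y)\in\partial f(y)$ and $\frac{1}{\gamma}(y-x)\in\partial g_i(y)$ respectively; summing (the convex sum rule applies since $\dom f = X$) yields $y\in\argmin\{f+g_i\}$, and the converse direction simply reverses this manipulation. The identity $g(y) = g_i(y)$ is then extracted by comparing the two Moreau-envelope formulas ${}^\gamma\! g(2y-x) = g(y)+\frac{1}{2\gamma}\|y-x\|^2$ and ${}^\gamma\! g_i(2y-x) = g_i(y)+\frac{1}{2\gamma}\|y-x\|^2$ at the common point $2y-x$. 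Assertion \ref{it:DRS sFix nec} is obtained by exactly the same template with Proposition~\ref{prop:DRS basic}\ref{it:DRS sFix} in place of \ref{it:DRS Fix}; the ``for all'' quantifier records the requirement that the singleton $\{y\}$ equal the whole $\prox_{\gamma g}(2y-x)$.

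For \ref{it:DRS sFix local min}, I would exploit that $f+g = \min_{i\in I}(f+g_i)$ is itself min-convex, so Proposition~\ref{prop:proximity of min}\ref{it:local min} reduces local minimality of $y := \prox_{\gamma f}(x)$ to showing $y\in\argmin\{f+g_i\}$ for every $i\in I$ with $g(y) = g_i(y)$. Fixing such an $i$, the task becomes verifying the Moreau-envelope equality required to invoke \ref{it:DRS sFix nec}. Since $\StrFix T_{\rm DR}\subseteq\Fix T_{\rm DR}$ automatically provides $y\in\prox_{\gamma g}(2y-x)$, one has ${}^\gamma\! g(2y-x) = g(y)+\frac{1}{2\gamma}\|y-x\|^2$; the pointwise bound $g_i\geq g$ together with $g_i(y) = g(y)$ immediately sandwiches ${}^\gamma\! g_i(2y-x)$ between two equal quantities, forcing the needed equality.

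Assertion \ref{it:DRS Fix local min} will be the main obstacle and is where the bookkeeping must be handled with care. The key input is the active selector of $T_{\rm DR}$, namely
\begin{equation*}
\varphi(x) := \left\{i\in I: {}^\gamma\! g\bigl(2\prox_{\gamma f}(x)-x\bigr) = {}^\gamma\! g_i\bigl(2\prox_{\gamma f}(x)-x\bigr)\right\},
\end{equation*}
which is osc by the construction in Proposition~\ref{prop:closedness}\ref{it:UAN composition} (using continuity of $\prox_{\gamma f}$ together with Proposition~\ref{prop:osc_composition}). By Proposition~\ref{prop:prop2.1 Tam17} there is $\delta_0>0$ with $\varphi(x)\subseteq\varphi(x^*)$ on $\mathbb{B}(x^*;\delta_0)$. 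Write $y^* := \prox_{\gamma f}(x^*)$; part \ref{it:DRS sFix local min} yields $\delta_1>0$ with $f+g\geq (f+g)(y^*)$ on $\mathbb{B}(y^*;\delta_1)$. Setting $\delta := \min\{\delta_0,\delta_1\}$, for any $\bar x\in\Fix T_{\rm DR}\cap\mathbb{B}(x^*;\delta)$ nonexpansiveness of $\prox_{\gamma f}$ places $\bar y := \prox_{\gamma f}(\bar x)$ inside $\mathbb{B}(y^*;\delta_1)$. Part \ref{it:DRS Fix nec} supplies an $i\in\varphi(\bar x)\subseteq\varphi(x^*)$ with $\bar y\in\argmin\{f+g_i\}$ and $g(\bar y) = g_i(\bar y)$; part \ref{it:DRS sFix nec} applied at $x^*$ (using the same $i$) delivers $y^*\in\argmin\{f+g_i\}$ and $g(y^*) = g_i(y^*)$. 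Chaining these equalities gives $(f+g)(\bar y) = (f+g_i)(\bar y) = \min(f+g_i) = (f+g_i)(y^*) = (f+g)(y^*)$, so $\bar y$ achieves the local-minimum value at an interior point of $\mathbb{B}(y^*;\delta_1)$ and is therefore itself a local minimum of $f+g$. The conceptual difficulty throughout is bridging the ``Moreau-envelope-active'' indices that control fixed points of $T_{\rm DR}$ with the ``functional-active'' indices that control local minimality of $f+g$; once parts \ref{it:DRS Fix nec}--\ref{it:DRS sFix local min} have made this bridge, \ref{it:DRS Fix local min} reduces to a selector-osc plus prox-nonexpansiveness argument.
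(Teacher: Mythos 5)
Your overall strategy coincides with the paper's: parts (a) and (b) are read off from Proposition~\ref{prop:DRS basic}\ref{it:DRS Fix}--\ref{it:DRS sFix} together with the union formula for $\prox_{\gamma g}$ from Proposition~\ref{prop:proximity of min}\ref{it:prox f}; part (c) is the same sandwich argument showing that every index with $g(y)=g_i(y)$ is Moreau-envelope active, followed by Proposition~\ref{prop:proximity of min}\ref{it:local min} applied to $f+g=\min_{i\in I}(f+g_i)$; and part (d) uses exactly the same selector $\phi$, the same osc/radius-of-attraction step via Proposition~\ref{prop:prop2.1 Tam17}, nonexpansiveness of $\prox_{\gamma f}$, and the same chain of equalities $(f+g)(\overline{y})=(f+g_i)(\overline{y})=(f+g_i)(y^*)=(f+g)(y^*)$. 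The one genuine deviation is in (a)--(b), where the paper invokes \cite[Corollary~27.3]{BauCom17} for the equivalence $y=\prox_{\gamma f}(x)=\prox_{\gamma g_i}(2y-x)\iff y\in\argmin\{f+g_i\}$, whereas you derive it by subdifferential calculus. Your forward implication (summing $\tfrac{1}{\gamma}(x-y)\in\partial f(y)$ and $\tfrac{1}{\gamma}(y-x)\in\partial g_i(y)$) is correct, but ``the converse direction simply reverses this manipulation'' is too quick: from $y\in\argmin\{f+g_i\}$ and the sum rule you only obtain \emph{some} $u\in\partial f(y)$ with $-u\in\partial g_i(y)$, not that $u$ may be taken to be the particular element $\tfrac{1}{\gamma}(x-y)$ determined by the given $x$. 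For instance, with $f=|\cdot|$, $g=g_1=0$, $\gamma=1$ and $x=1/2$ one has $y=\prox_{f}(x)=0\in\argmin\{f+g_1\}$ and the envelope equality holds trivially, yet $\prox_{g_1}(2y-x)=-1/2\neq y$, so $x\notin\Fix T_{\rm DR}$. This is a real gap in the ``if'' directions of (a) and (b) --- one that the paper's own formulation arguably shares --- but it is harmless for the remainder of the proposition: parts (c) and (d) rely only on the ``only if'' directions, which both you and the paper establish correctly.
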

\begin{proof}

We first note that $\prox_{\gamma f}$ is single-valued by \cite[Proposition~12.15]{BauCom17} and, by  Proposition~\ref{prop:proximity of min}, that 
\begin{equation*}
\prox_{\gamma g}(x) =\{ \prox_{\gamma g_i}(x): i\in I,\ {}^\gamma\! g_i(x) ={}^\gamma\! g(x)\}.
\end{equation*}

\ref{it:DRS Fix nec}:~Let $x\in X$ and denote $y :=\prox_{\gamma f}(x)$. By Proposition~\ref{prop:DRS basic}\ref{it:DRS Fix}, $x\in\Fix T_{\rm DR}$ if and only if
\begin{equation*}
y \in \prox_{\gamma g}(2y-x) =\{\prox_{\gamma g_i}(2y-x): i\in I,\ {}^\gamma\! g(2y-x) ={}^\gamma\! g_i(2y-x)\}.
\end{equation*}
This is equivalent to the existence of an index $i\in I$ such that $y =\prox_{\gamma g_i}(2y-x)$ and ${}^\gamma\! g(2y-x) ={}^\gamma\! g_i(2y-x)$. Since $f$ has full domain, by applying \cite[Corollary~27.3(i)\&(iii)]{BauCom17} to $f$ and $g_i$, we deduce that
\begin{equation*}
y =\prox_{\gamma f}(x) =\prox_{\gamma g_i}(2y-x) \iff y\in \argmin\{f+g_i\}.
\end{equation*}
Moreover, it follows from $y =\prox_{\gamma g_i}(2y-x)\in \prox_{\gamma g}(2y-x)$ and ${}^\gamma\! g(2y-x) ={}^\gamma\! g_i(2y-x)$ that
\begin{equation*}
g(y) ={}^\gamma\! g(2y-x) -\frac{1}{2\gamma}\|y-x\|^2 ={}^\gamma\! g_i(2y-x) -\frac{1}{2\gamma}\|y-x\|^2 =g_i(y), 
\end{equation*}
which completes the claim.

\ref{it:DRS sFix nec}:~Let $x\in X$ and denote $y :=\prox_{\gamma f}(x)$. By Proposition~\ref{prop:DRS basic}\ref{it:DRS sFix}, $x\in\StrFix T_{\rm DR}$ if and only if
\begin{equation*}
\{y\} =\prox_{\gamma g}(2y-x) =\{\prox_{\gamma g_i}(2y-x): i\in I,\ {}^\gamma\! g(2y-x) ={}^\gamma\! g_i(2y-x)\}, 
\end{equation*}
which is equivalent to $y =\prox_{\gamma g_i}(2y-x)$ for all $i\in I$ such that ${}^\gamma\! g(2y-x) ={}^\gamma\! g_i(2y-x)$.  The remainder of the proof is similar to \ref{it:DRS Fix nec}.

\ref{it:DRS sFix local min}:~Let $x\in\StrFix T_{\rm DR}$ and denote $y :=\prox_{\gamma f}(x)$. Then, by \ref{it:DRS sFix nec}, 
\begin{equation}\label{eq:DRS sFix}
 i\in I\text{~~and~~} {}^\gamma\! g(2y-x) ={}^\gamma\! g_i(2y-x) \implies y\in \argmin\{f+g_i\}.
\end{equation}
Now, take any $i\in I$ satisfying $g(y) =g_i(y)$. Noting that $y\in \prox_{\gamma g}(2y-x)$ and using Proposition~\ref{prop:proximity of min}\ref{it:env f}, it follows that
\begin{equation*}
{}^\gamma\! g(2y-x) =g(y) +\frac{1}{2\gamma}\|(2y-x)-y\|^2 
 =g_i(y) +\frac{1}{2\gamma}\|(2y-x)-y\|^2 \geq {}^\gamma\!g_i(2y-x) \geq {}^\gamma\! g(2y-x),
\end{equation*}
which yields ${}^\gamma\! g(2y-x) ={}^\gamma\! g_i(2y-x)$. By \eqref{eq:DRS sFix}, $y\in \argmin\{f+g_i\}$. Since this holds for any $i\in I$ satisfying $g(y) =g_i(y)$, Proposition~\ref{prop:proximity of min}\ref{it:local min} implies that $y$ is a local minimum of $f+g$.

\ref{it:DRS Fix local min}: Let $x^*\in\StrFix T_{\rm DR}$ and set $y^* :=\prox_{\gamma f}(x^*)$. By \ref{it:DRS sFix local min}, there exists $\delta_1>0$ such that $y^*$ is a minimum of $f+g$ on $\mathbb{B}(y^*;\delta_1)$. Consider $\phi\colon X\setto I$ given by 
\begin{equation*}
 \phi(x) :=\{i\in I: {}^\gamma\! g(2y-x) ={}^\gamma\! g_i(2y-x),\ y :=\prox_{\gamma f}(x)\}.
\end{equation*} 
Noting that $x\mapsto 2\prox_{\gamma f}(x)-x$ is continuous, combining
Propositions~\ref{prop:osc_composition} and \ref{prop:continuous rep} shows that $\phi$ is osc. Consequently, Proposition~\ref{prop:prop2.1 Tam17} yields the existence of $\delta_2>0$ such that 
\begin{equation*}
\forall x\in \mathbb{B}(x^*;\delta_2),\quad \phi(x)\subseteq \phi(x^*).
\end{equation*}
Now take $\delta \in (0,\min\{\delta_1, \delta_2\})$, let $\overline{x}\in \Fix T_{\rm DR}\cap \mathbb{B}(x^*;\delta)$, and set $\overline{y} :=\prox_{\gamma f}(\overline{x})$. Using \ref{it:DRS Fix nec}, there exists $i\in \phi(\overline{x})$ such that $\overline{y}\in \argmin\{f+g_i\}$ and $g(\overline{y}) =g_i(\overline{y})$. Since $x^*\in\StrFix T_{\rm DR}$ and $i\in \phi(\overline{x})\subseteq \phi(x^*)$, it also holds that $y^*\in \argmin\{f+g_i\}$ and $g(y^*) =g_i(y^*)$ due to \ref{it:DRS sFix nec}. We deduce that 
\begin{equation*}
f(\overline{y})+g(\overline{y}) =f(\overline{y})+g_i(\overline{y}) =f(y^*)+g_i(y^*) =f(y^*)+g(y^*).
\end{equation*} 
Moreover, $\overline{y}\in \mathbb{B}(y^*;\delta)\subset\inte\mathbb{B}(y^*;\delta_1)$ since $\overline{x}\in B(x^*;\delta)$ and $\prox_{\gamma f}$ is nonexpansive. Therefore, $\overline{y}$ is also a local minimum of $f+g$.
\end{proof}

\begin{theorem}[Douglas--Rachford splitting] 
Let $f\colon X\to \mathbb{R}$ be convex, $g\colon X\to (-\infty,+\infty]$ be min-convex, and $\gamma>0$. Suppose that $x^*\in\StrFix T_{\rm DR}$ and let $(\lambda_n)_{n\in\mathbb{N}}$ be a sequence in $(0,2]$ with $\liminf_{n\to\infty}\lambda_n(2-\lambda_n)>0$. Denote $r :=r(x^*;T_{\rm DR})\in(0,+\infty]$ and consider a sequence $(x_n)_{n\in\mathbb{N}}$ given by \eqref{eq:DR splitting} with $x_0\in\inte\mathbb{B}(x^*;r)$. Then $(x_n)_{n\in\mathbb{R}}$ converges to a point $\overline{x}\in\Fix T_{\rm DR}\cap \mathbb{B}(x^*;r)$. 
Furthermore, for sufficiently small $r>0$, $\prox_{\gamma f}(\overline{x})$ is a local minimum of $f+g$.
\end{theorem}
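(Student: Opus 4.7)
The plan is to recognise the Douglas--Rachford recursion \eqref{eq:DR splitting} as a Krasnosel'ski\u{\i}--Mann iteration driven by $T_{\rm DR}$ and then to invoke Corollary~\ref{cor:union averaged} with $\alpha =1/2$. Indeed, unpacking \eqref{eq:DR splitting}, each update satisfies
\begin{equation*}
x_{n+1} =x_n +\lambda_n(z_n-y_n) =(1-\lambda_n)x_n +\lambda_n(x_n+z_n-y_n) \in (1-\lambda_n)x_n+\lambda_n T_{\rm DR}(x_n),
\end{equation*}
since $x_n+z_n-y_n$ is by construction an element of $T_{\rm DR}(x_n)$ as described by \eqref{eq:DRS}.

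Next I would invoke Proposition~\ref{prop:DRS basic}\ref{it:DRS averaged}, which guarantees that $T_{\rm DR}$ is union $1/2$-averaged nonexpansive. With $\alpha =1/2$ we have $1/\alpha =2$, so the hypothesis $(\lambda_n)\subseteq (0,2]$ with $\liminf_{n\to\infty}\lambda_n(2-\lambda_n)>0$ is exactly the relaxation requirement of Corollary~\ref{cor:union averaged}. Combined with $x^*\in \StrFix T_{\rm DR}$ and $x_0\in \inte\mathbb{B}(x^*;r)$ for $r =r(x^*;T_{\rm DR})$, Corollary~\ref{cor:union averaged} delivers convergence of $(x_n)_{n\in\mathbb{N}}$ to some $\overline{x}\in \Fix T_{\rm DR}\cap \mathbb{B}(x^*;r)$.

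For the final claim, I would appeal to Proposition~\ref{prop:properties of DRS}\ref{it:DRS Fix local min}, which provides a $\delta>0$ such that every point in $\prox_{\gamma f}(\Fix T_{\rm DR}\cap \mathbb{B}(x^*;\delta))$ is a local minimum of $f+g$. Taking $r\leq \delta$ (this is the meaning of ``sufficiently small $r$'') forces $\overline{x}\in \mathbb{B}(x^*;\delta)$, and hence $\prox_{\gamma f}(\overline{x})$ is a local minimum of $f+g$.

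There is no real obstacle in this proof; the substantive work has already been done in Proposition~\ref{prop:DRS basic} (averagedness of $T_{\rm DR}$), Proposition~\ref{prop:properties of DRS} (the fixed-point/local-minimum correspondence near a strong fixed point), and Corollary~\ref{cor:union averaged} (the abstract convergence result). The theorem is essentially an assembly of these three ingredients. The only subtlety worth flagging is the reduction of \eqref{eq:DR splitting} to a Krasnosel'ski\u{\i}--Mann update in $(x_n)$, which uses that the choices of $y_n$ and $z_n$ inside the multivalued proximities can be matched to a single element of $T_{\rm DR}(x_n)$ via the representation in \eqref{eq:DRS}.
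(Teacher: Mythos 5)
Your proof is correct and follows essentially the same route as the paper's: union $1/2$-averagedness of $T_{\rm DR}$ from Proposition~\ref{prop:DRS basic}\ref{it:DRS averaged} combined with Corollary~\ref{cor:union averaged} for convergence, then Proposition~\ref{prop:properties of DRS}\ref{it:DRS Fix local min} for the local-minimum claim. The only difference is that you spell out the reduction of \eqref{eq:DR splitting} to a Krasnosel'ski\u{\i}--Mann update, which the paper leaves implicit.
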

\begin{proof}
Convergence of the sequence $(x_n)_{n\in\mathbb{N}}$ to a fixed point, $\overline{x}$, follows by combining Proposition~\ref{prop:DRS basic}\ref{it:DRS averaged} and Corollary~\ref{cor:union averaged}. The remaining conclusion follows from Proposition~\ref{prop:properties of DRS}\ref{it:DRS Fix local min}.
\end{proof}

\section{Conclusions}
In this work, we have introduced and studied the classes of union nonexpansive and union averaged nonexpansive operators. Fixed point iterations based on these operators are locally convergent to fixed points when initialized near strong fixed points. The convergence behavior of proximal algorithms applied to minimization problems involving min-convex functions can be systematically studied using the notion.

\subsection*{Acknowledgments}
MND was partially supported by the Australian Research Council (ARC) Discovery Project DP160101537 and by the Priority Research Centre for Computer-Assisted Research Mathematics and its Applications (CARMA) at the University of Newcastle. 
He wishes to acknowledge the hospitality and the support of D. Russell Luke during his visit to the Universit\"at G\"ottingen. MKT was partially supported by a Postdoctoral Fellowship from the Alexander von Humboldt Foundation.

\end{document}